\let\uppercasenonmath\@gobble
\titleformat{\section}{\bfseries\center}{\thesection.}{0.4em}{}
\titleformat{\subsection}{\vspace{.08cm}\bfseries}{\thesubsection.}{0.4em}{}
\newtheorem{prop}{Proposition}[section]
\newtheorem{teo}[prop]{Theorem}
\newtheorem{lem}[prop]{Lemma}
\newtheorem{cor}[prop]{Corollary}
\theoremstyle{definition}
\newtheorem{defi}[prop]{Definition}
\theoremstyle{theorem}
\newcommand{\mat}[1]{\left(\begin{matrix}#1\end{matrix}\right)}
\newcommand{\Gas}[1]{\Gamma_{#1}}
\newcommand{\Gr}[2]{\mathbb{G}(#1,#2)}
\newcommand{\Grl}[2]{G(#1,#2)}
\newcommand{\Gs}[2]{\mathbb{G}{(#1,\Sp^{#2})}}
\newcommand{\Gh}[2]{\mathbb{G}{(#1,\HH^{#2})}}
\newcommand{\Gaf}[2]{\mathbb{G}{(#1,\EE^{#2})}}
\newcommand{\Gah}[1]{\Gamma_{#1}}
\newcommand{\vs}{\vspace{0.2cm}}
\def\Ker{\mathrm{Ker }}
\def\Euc{\mathrm{Euc}}
\def\End{\mathrm{\End}}
\newcommand{\eps}{\varepsilon}
\newcommand{\BB}{\mathbb{B}}
\newcommand{\CC}{\mathbb{C}}
\newcommand{\EE}{\mathbb{E}}
\newcommand{\HH}{\mathbb{H}}
\newcommand{\PP}{\mathbb{P}}
\newcommand{\RR}{\mathbb{R}}
\newcommand{\Sp}{\mathbb{S}}
\newcommand{\Mm}{\mathcal{M}}
\newcommand{\kk}{\mathbf{k}}
\title[Classification of isometries and invariant subspaces]
{{\large Classification of isometries of spaces of constant curvature\\ and invariant subspaces}}
\author{Joana Cirici}
\email{jcirici@math.fu-berlin.de}
\address[J. Cirici]{
Fachbereich Mathematik und Informatik\\
Freie Universit\"{a}t Berlin\\  Arnimallee 3\\ 
14195 Berlin}
\subjclass[2010]{15A21, 51M10, 51F25}
\keywords{Isometries, normal forms, Segre symbol, orbit types, $z$-classes, invariant subspaces, orthogonal group, Euclidean
group, Lorentz group, hyperbolic space. }
\thanks{Financial support from the Marie Curie Action through PCOFUND-GA-2010-267228.
Partially supported by the Spanish Ministry of Economy and Competitiveness under project MTM 2009-09557 and by the 
DFG through project SFB 647.}
\begin{document}
\maketitle
\begin{abstract}
We study the varieties of invariant totally geodesic submanifolds of isometries
of the spherical, Euclidean and hyperbolic spaces
in each finite dimension.
We show that the dimensions of the connected
components of these varieties determine the orbit type (or the $z$-class) of the isometry.
For this purpose, we introduce the Segre symbol of an isometry, a discrete invariant
encoding the structure of its normal form, which parametrizes $z$-classes.
We then provide a description of the isomorphism type of the varieties of invariant subspaces
in terms of the Segre symbol.
\end{abstract}

\section{Introduction}
Our objective is to study the isometries of spaces of constant curvature.
We will denote by $\mathbf{I}(\mathbb{M}^n)$ the isometry group of $\mathbb{M}^n$,
where $\mathbb{M}^n$ denotes one of the simply connected space forms of finite dimension $n$:
the sphere $\Sp^n$, the Euclidean space $\EE^n$ or the hyperbolic space $\HH^n$.

A very first and well known classification of elements of $\mathbf{I}(\mathbb{M}^n)$
is based on their fixed point behavior.
The \textit{translation length} of an isometry $f$ is the infimum of the distances that points are moved.
If $f$ has a fixed point it is called \textit{elliptic}. If the translation length 
is positive then $f$ is called \textit{hyperbolic}.  If $f$ does not
fix a point but its translation length is zero, then it is called \textit{parabolic}.
We remark that the latter case can only occur in hyperbolic spaces (see for example \cite{Gr}). 
This is a very coarse classification, only useful up to a certain extent,
and in some sense analogous to the distinction between direct and indirect isometries.

At the other extreme, we can consider the classification of isometries by conjugacy classes. This gives 
infinitely many classes, each one represented by a normal form.
A natural step is to identify those classes 
differing in the sets of eigenvalues but
having the same normal form structure. This leads 
to what we call the \textit{Segre decomposition}: a decomposition of $\mathbf{I}(\mathbb{M}^n)$ into finitely many classes,
each an uncountable union of conjugacy classes, parametrized by a discrete invariant.

The \textit{Segre symbol} of a linear endomorphism of a finite dimensional complex vector space is a
sequence of positive integers encoding the number of distinct eigenvalues, together with the sizes of
the Jordan blocks corresponding to each eigenvalue of a Jordan normal form of the endomorphism. This invariant has been studied by many authors
for classification purposes, such as the treatment of collineations and pencils of quadrics developed in $\S$.VIII of \cite{HP}
or Petrov's classification of gravitational fields appearing in $\S$.3 of \cite{Pe}, in which the Segre symbol 
of the curvature tensor allows to stratify Einstein spaces.

Arnold \cite{Ar} studied the conjugation action of $\mathrm{GL}(n,\CC)$ on the space $\Mm_n(\CC)$ of complex square matrices of size $n$ and 
suggested that the partition into Segre classes (defined by those matrices with a given Segre symbol) makes $\Mm_n(\CC)$ into a stratified space.
Gibson \cite{Gi} proved this statement and showed that the stratification is
Whitney-regular. Furthermore, in this case the 
partition according to the Segre symbol coincides with the stratification by orbit types (see \cite{Br}).

Consider a Lie group $G$ acting on a manifold $M$. Then two elements of $M$ are said to have the same \textit{orbit type} if their
orbits are $G$-equivariantly isomorphic. This is equivalent to the condition that their isotropy subgroups
are conjugate. In the case in which a group acts on itself by conjugation, then the orbit types are also called
\textit{centralizer classes} or \textit{z-classes}.
This notion was introduced by Kulkarni in \cite{Ku1} in his study of “dynamical types”.
The classification by $z$-classes using characteristic and minimal polynomials has been developed in many
particular cases, such as for the general linear and affine groups \cite{Ku2},
the anisotropic groups of type $G_2$ defined over a field \cite{Sin},
or the groups of isometries of the real, 
quaternionic and complex hyperbolic spaces 
\cite{GK}, \cite{Go3}, \cite{GP}.

In the present paper we define the Segre symbol of isometries of $\mathbb{M}^n$ using normal forms.
Our definition is a natural adaptation from the original Segre symbol of a linear endomorphism,
to isometries of spaces of constant curvature, and involves the distinction between elliptic,
hyperbolic and parabolic isometries.
We then study the $z$-classes of $\mathbf{I}(\mathbb{M}^n)$ and show that
the Segre decomposition coincides with the decomposition by $z$-classes.
In particular, the number of $z$-classes is finite, and we can count them 
by simple combinatoric arguments.
Each $z$-class defines a stratum of $\mathbf{I}(\mathbb{M}^n)$. 
We compute its dimension in terms of its Segre symbol. 
Our approach differs from that of \cite{Ku2} and \cite{GK}, in that we strongly use the existence of normal forms. In particular,
to study the $z$-classes of isometries we just describe the centralizer group for each type of normal form.

The Segre decomposition (or equivalently, the decomposition by $z$-classes) 
provides a satisfactory classification of isometries: the number of classes is finite and their dynamical behavior
is encoded in the shape of each normal form. Each Segre symbol defines a stratum of a given dimension.

The motivation of this paper is to relate the Segre decomposition of isometries with the ad hoc classifications
that exist in the literature for isometries of lower dimensional spaces, allowing to generalize 
those geometric treatments to arbitrary dimensions.

Isometries of the two- and three-dimensional Euclidean spaces are usually classified
according to the sets of fixed points and invariant lines (see for example \cite{Cox}, \cite{Mar} or \cite{RT}).
For the Euclidean plane, this classification coincides with the Segre decomposition, but for the three-dimensional Euclidean space,
we find that there are two Segre symbols that are merged into the same class.
Indeed, the sets of fixed points and invariant lines allow to
distinguish a rotation from an axial symmetry (a rotation of angle $\pi$) in $\EE^3$. However, when
composed with a translation along the rotation axis, 
both isometries have no fixed points and a single fixed line. As a consequence, in the above references,
the two isometries are considered in a single class, while their $z$-classes differ.
In order to solve this discordance, we observe that it suffices to also consider
 the sets of invariant planes.

The above example motivates the study of higher dimensional invariant subspaces of $\EE^n$,
in order to classify its isometries. A generalization of this study to isometries of the curved spaces
$\Sp^n$ and $\HH^n$ leads to the study of totally geodesic
invariant subspaces.

We recall that a Riemannian submanifold $N$ of a Riemannian manifold $M$ is called \textit{totally geodesic}
if all geodesics in $N$ are also geodesics in $M$. For example, each closed geodesic in a Riemannian
manifold defines a 1-dimensional compact totally geodesic submanifold.
The generalized Grassmannian $\Gr{k}{\mathbb{M}^n}$ is defined as the set of closed totally
geodesic submanifolds of $\mathbb{M}^n$ isometric to $\mathbb{M}^k$, where $0\leq k\leq n$.
It is a homogeneous space of dimension $(k+1)(n-k)$.
Given an isometry $f\in \mathbf{I}(\mathbb{M}^n)$ we denote by $\Gamma_f(k)$ the 
closed subset of $\Gr{k}{\mathbb{M}^n}$
defined by those submanifolds that are $f$-invariant.

In this paper we describe the sets $\Gamma_f(k)$ and relate them to the Segre symbol:
we show that for all $0\leq k\leq n$, the set $\Gamma_f(k)$ is a closed smooth submanifold of $\Gr{k}{\mathbb{M}^n}$, and that
given isometries $f,g\in\mathbf{I}(\mathbb{M}^n)$, then we have that $\Gamma_f(k)\cong \Gamma_g(k)$
for all $0\leq k\leq n$ if and only if $f$ and $g$ have the same Segre symbol (and hence they are in the same $z$-class).
In fact, we prove a stronger result: the dimensions of 
the connected components of $\Gamma_f(k)$, for $0\leq k\leq 4$, determine the Segre symbol (and thus the $z$-class) of the isometry $f$.

The fact that the varieties of invariant subspaces determine the Segre symbol of isometries holds
in other contexts of similar nature. For instance,
we have been able to prove similar results for linear and affine endomorphisms of complex vector spaces. 
In this case the varieties of invariant subspaces are not smooth, but admit a stratification into smooth 
varieties according to restricted Segre symbols (see \cite{Sh}).
This theory involves a further study of partitions and Young diagrams, and will be developed elsewhere.
\\

We next explain the contents of each section.
Although normal forms of isometries are known, precise statements and proofs for
Euclidean and hyperbolic isometries are difficult to find in the literature. Normal forms of $\Euc(n)$
can be found in \cite{RT} with a slight different notion of normal form than the one presented here.  
Isometries of the hyperbolic space are often
studied by means of the M\"{o}bius group, by considering the upper-half plane model of the hyperbolic
space (see for example \cite{Ah1}, \cite{Ah2}, \cite{Go2}, \cite{Ra}). This may possibly be a cause for the lack of references concerning normal forms of
the Lorentz group $\mathrm{O}(1,n)$. In the classical papers \cite{Ab} and \cite{Gr} such normal forms are listed, 
but apparently these are not well enough known.
For instance, in Theorem 6.9 of \cite{Ba}, the list of normal forms is incomplete since parabolic isometries
are missing. Since we use normal forms as a key step in the study of
orbit types and invariant subspaces, we present a survey of these forms in Section 2 with elementary proofs purely based on
linear algebra arguments. 

Section 3 is devoted to the study of $z$-classes of isometries. We compute the centralizers of normal forms
via a product formula accounting for the decomposition of normal forms into primary components.
The main results of this section are Theorems $\ref{otort}$, $\ref{zclasseuc}$ and $\ref{zclasshyp}$, where we relate the $z$-classes with the Segre symbol
of isometries of spherical, Euclidean and hyperbolic spaces respectively. As a direct application, we
compute the dimensions of centralizer groups and of each Segre stratum, in terms of the Segre symbol.

In Section 4 we study the varieties of invariant totally geodesic
subspaces of an isometry. We provide a description of these varieties in terms of the Segre symbol, and
show that their connected components are products of generalized Grassmannians. We then prove the
main result of this paper in each case (see Theorems $\ref{main1}$, $\ref{main2}$ and $\ref{main3}$),
relating the Segre symbol of isometries with the varieties of invariant subspaces.
We detail the classification of isometries for spaces in 1, 2 and 3 dimensions
(see Tables $(1)-(9)$), which we believe, should be the standard classification presented in basic linear geometry courses.
The tables show a normal form type for each Segre symbol, the dimensions of the strata
and the varieties of invariant subspaces in each case.

\section{Normal forms and Segre symbol}
In this expository section we review the normal forms of isometries of simply connected space forms $\mathbb{M}^n$ in each finite dimension.
We then define the Segre symbol of an isometry of $\mathbb{M}^n$.
A Segre stratum in $\mathbf{I}(\mathbb{M}^n)$
consists of all isometries having a given Segre symbol. We define the Segre decomposition to be the
partition of $\mathbf{I}(\mathbb{M}^n)$ into Segre strata.
There are finitely many strata, each a collection of conjugacy classes having the same discrete
invariants but different eigenvalues.
We compute the number of Segre strata in $\mathbf{I}(\mathbb{M}^n)$ using combinatoric arguments.

\subsection{Spherical isometries}
Consider the standard model of the $n$-dimensional spherical space $\Sp^n=\{x\in \RR^{n+1}; |x|=1\}$.
The group $\mathbf{I}(\Sp^n)$ of isometries of $\Sp^n$ is isomorphic to the real orthogonal group $\mathrm{O}(n+1)$.

\begin{defi}
An \textit{orthogonal normal form} is a block diagonal matrix whose blocks are
$\pm \mathbb{I}_k$ or $R_\theta$, where
$\mathbb{I}_k$ denotes the identity matrix of size $k\times k$ and 
$$R_\theta=\mat{
\cos \theta &\text{-}\sin \theta\\
\sin \theta &\cos \theta\\
}\,;\, 0<\theta<\pi.$$
\end{defi}

Denote by $A\oplus B$ the block diagonal matrix whose blocks are the direct summands $A$ and $B$.
Likewise, denote by $A^{\oplus n}$ the block diagonal matrix consisting of $n$ blocks equal to $A$.

\begin{teo}\label{orthogonalforms}
Every element of $\mathrm{O}(n)$ is conjugate to an orthogonal normal form, unique up to block permutation.
\end{teo}
\begin{proof}
Let $A$ be a real square matrix and consider it as a matrix with values in $\CC$.
Let $z=x+iy$ be an eigenvector corresponding to some eigenvalue $\lambda=a+ib$.
Then $Ax=ax-by$ and $Ay=ay+bx$. Hence the subspace 
$W=Sp\{x,y\}\subset \RR^2$ is $A$-invariant.

Assume that $A\in \mathrm{O}(n)$ is an orthogonal matrix. 
The above argument implies that there exists 
an $A$-invariant subspace $W\subset\RR^n$ of dimension 1 or 2.
Since its orthogonal complement 
$W^\bot\subset\RR^n$ is also $A$-invariant and of dimension $<n$ we
may apply induction to see that $\RR^n=W_1\oplus \cdots\oplus W_k$ 
decomposes into a direct sum of $A$-invariant 
subspaces with $\dim W_i\in\{1,2\}$.
Therefore $A$ is conjugate to a block diagonal
matrix $B=B_1\oplus\cdots \oplus B_k$ where
$B_i$ is defined by restriction of $B$ to $W_i$.
To conclude the proof it suffices to note that $\mathrm{O}(1)=\{\pm 1\}$ and 
that every element of
$\mathrm{O}(2)$ with no real eigenvalues is of the form $R_\theta$ for some $0<\theta<\pi$.
\end{proof}

\begin{defi}
Let $f\in \mathbf{I}(\Sp^n)$. By Theorem $\ref{orthogonalforms}$, $f$ has an orthogonal normal form
$$R_{\theta_1}^{\oplus n_1}\oplus\cdots\oplus R_{\theta_s}^{\oplus n_s}\oplus \pm(\mathbb{I}_{m_1}\oplus \text{-}\mathbb{I}_{m_2})$$
where $\theta_i\neq \theta_j$ for $i\neq j$, $n_1\geq \cdots\geq n_s\geq 0$ and $m_1\geq m_2\geq 0$.
The \textit{Segre symbol of $f$} is
$${\tilde\sigma}_f=[(n_1\bar{n}_1), \cdots,(n_s\bar{n}_s),m_1,m_2]$$
where entries that are zero are omitted from the notation.
\end{defi}
For example, if $R_\theta\oplus \mathbb{I}_2$ is a normal form of $f\in\mathbf{I}(\Sp^3)$, then $\tilde\sigma_f=[(1\bar 1),2]$.
The notation $(n\bar n)$ indicates that over $\CC$,
there is a pair of conjugate complex eigenvalues of multiplicity $n$.

\begin{prop}\label{numesferiques}
The number of Segre classes $s(n)$ of $\mathbf{I}(\Sp^n)$ is given by
$$s(n)=\sum_{j=0}^{[{{n+1}\over{2}}]} p(j)\left(\left[{{{n+1}}\over{2}}\right]-j+1\right),$$
where $p(k)$ denotes the number of partitions of $k$ and $[k]$ denotes the integer part of $k$.
\end{prop}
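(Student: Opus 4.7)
The plan is a direct combinatorial enumeration based on the structure of the Segre symbol. By definition, any Segre symbol in $\textbf{I}(\Sp^n)$ has the form $[(n_1\overline{n}_1),\dots,(n_s\overline{n}_s), m_1, m_t]$ with $n_1\geq \cdots \geq n_s\geq 1$ and $m_1\geq m_t\geq 0$, subject to the matrix-size constraint coming from the normal form
$$2(n_1+\cdots+n_s) + m_1 + m_t = n+1.$$

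First, I would group the enumeration according to the integer $j:=n_1+\cdots+n_s$, which is half the dimension of the rotation part. For each fixed $j\geq 0$, the weakly decreasing sequences $(n_1\geq \cdots\geq n_s\geq 1)$ summing to $j$ are in bijection with the unordered partitions of $j$, contributing a factor of $p(j)$.

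Next, for each such $j$, I would count the admissible tails $(m_1,m_t)$ with $m_1\geq m_t\geq 0$ and $m_1+m_t=n+1-2j$. Writing $m_t\in\{0,1,\dots,\lfloor (n+1-2j)/2\rfloor\}$ and noting that $m_1=n+1-2j-m_t$ is then determined, the count is $\lfloor (n+1-2j)/2\rfloor + 1$. Since $2j$ is even, $\lfloor (n+1-2j)/2\rfloor = [(n+1)/2]-j$, so the number of admissible pairs is exactly $[(n+1)/2]-j+1$.

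Finally, the nonnegativity condition $n+1-2j\geq 0$ restricts $j$ to the range $0\leq j\leq [(n+1)/2]$, and summing the product $p(j)\cdot([(n+1)/2]-j+1)$ over this range yields the stated formula for $s(n)$. There is no real obstacle here; the only point worth verifying carefully is that a Segre symbol depends only on the multiplicities $n_i$ and $m_i$ (and not on the choice of angles $\theta_i$ or signs $\varepsilon_i\in\{\pm 1\}$), so that the enumeration above counts each Segre class exactly once. This is immediate from the definition, in which the eigenvalues themselves have been discarded.
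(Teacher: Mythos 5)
Your count is correct and is precisely the combinatorial argument the paper has in mind (the paper states only that the proposition ``is a matter of combinatorics'' and omits the details): grouping by $j=n_1+\cdots+n_s$ gives the factor $p(j)$, and the admissible pairs $m_1\geq m_t\geq 0$ with $m_1+m_t=n+1-2j$ number $\left[{{n+1}\over{2}}\right]-j+1$. One can confirm the formula against the paper's tables, e.g. $s(3)=p(0)\cdot 3+p(1)\cdot 2+p(2)\cdot 1=7$, matching the seven classes listed for $\Sp^3$.
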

\begin{proof}
The sum is taken over the number $j$ of blocks of type $R_\theta$. Hence the partition number $p(j)$ accounts for the
combinations $n_1\geq \cdots \geq n_s$ with $n_1+\cdots+n_s=j$.
The number of partitions of an integer $k$ into one or two parts is equal to $[{k\over 2}]+1$.
Taking $k=(n+1)-2j$ we get the formula between brackets, which accounts for the pairs $m_1\geq m_2$ with $m_1+m_2=k$.
\end{proof}

\subsection{Euclidean isometries}
Consider the standard affine space $\EE^n$ as the subspace of $V=\RR^{n+1}$ given by $\EE^n=\{x_{n+1}=1\}$.
The subspace of $V$ given by $V_0=\{x_{n+1}=0\}$ is called the \textit{vector space associated to $\EE^n$}.
We will assume that $V_0$ comes with the standard Euclidean metric.
\begin{defi}
A map $f:\EE^n\to \EE^n$ is an \textit{isometry} if it is induced by a linear map
$\varphi:V\to V$ satisfying $\varphi(\EE^n)\subset \EE^n$ and for which
the restriction $\varphi|_{V_0}:V_0\to V_0$ is an orthogonal map with respect
to the metric of $V_0$.
\end{defi}
The group $\mathbf{I}(\EE^n)$ of isometries of $\EE^n$ is isomorphic to
the Euclidean group $\Euc(n)=\mathrm{O}(n)\ltimes \RR^n$.

Recall that an \textit{Euclidean reference} $(\{e^i\}_{i=1}^n;p)$ of $\EE^n$ is an orthonormal 
basis $\{e^i\}_{i=1}^n$ of $V_0$ and a point $p\in \EE^n$. It
induces a basis $\{e^{i}\}_{i=1}^{n+1}$ of $V$, with $e^{n+1}=p$.

Let $f\in\mathbf{I}(\EE^n)$. The \textit{matrix of $f$} in an Euclidean reference
$(\{e^i\}_{i=1}^n;p)$ is the matrix of $\varphi$ in the induced basis of $V$. 
Since $\varphi(V_0)\subset V_0$,
the last row of such a matrix is $(0,\cdots,0,1)$.

\begin{defi}
An \textit{Euclidean normal form} is a matrix of one of the following types:
\begin{enumerate}[1)] 
 \item \textit{Elliptic}: $A\oplus 1$, where $A\in \mathrm{O}(n)$ is an orthogonal normal form.
\item  \textit{Hyperbolic}: $A\oplus
\left(\begin{smallmatrix}
1&a\\
0&1
\end{smallmatrix}\right)$, where ${A}\in \mathrm{O}(n-1)$ is an orthogonal normal form and $a\in \RR$ is a positive number.
\end{enumerate}
\end{defi}

\begin{teo}\label{teoeuclidi}
Let $f\in\mathbf{I}(\EE^n)$. There exists an Euclidean reference of $\EE^n$ such
that the matrix of $f$ in this reference is an Euclidean normal form.
\end{teo}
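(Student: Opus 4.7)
The plan is to reduce to the Spectral Theorem for $O(n)$ by splitting into the cases where $f$ does or does not admit a fixed point, which correspond precisely to the two normal form types. Write $f$ in affine coordinates as $f(x)=Ax+b$ with $A\in O(n)$ and $b\in\R^n$.

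The preliminary observation I would establish first is that $\ker(A-I)$ and $\mathrm{im}(A-I)$ are orthogonal complements in $\R^n$: if $Av=v$ and $w=(A-I)u$, then using $A^T=A^{-1}$,
$$\langle v,w\rangle=\langle v,Au\rangle-\langle v,u\rangle=\langle A^{-1}v,u\rangle-\langle v,u\rangle=0.$$
Since both subspaces are $A$-invariant, this orthogonal decomposition will be compatible with any orthonormal basis we pick later.

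If $f$ has a fixed point $p$, I would translate the origin to $p$, so that in the reference $(\{e^i\};p)$ the isometry acts linearly as $A$. Applying the Spectral Theorem for $O(n)$ inside $V_0$, there is an orthonormal basis $\{e^i\}$ in which $A$ takes orthogonal normal form, so the affine matrix is $A\oplus 1$, which is type (1).

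If $f$ has no fixed point, I would decompose $b=b_1+b_2$ with $b_1\in\ker(A-I)$ and $b_2\in\mathrm{im}(A-I)$, using the orthogonal splitting above. Choosing $c\in\R^n$ such that $(A-I)c=-b_2$ and taking $p=c$ as new origin, the translation part of $f$ in the new coordinates becomes $(A-I)c+b=b_1$. Since $f$ has no fixed point we must have $b_1\neq 0$; otherwise $p$ itself would be fixed. Now I would build the orthonormal basis of $V_0$ as follows: set the last vector $e^n=b_1/\|b_1\|\in\ker(A-I)$, extend to an orthonormal basis of $\ker(A-I)$, and append an orthonormal basis of $\mathrm{im}(A-I)=\ker(A-I)^\perp$. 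On $\mathrm{im}(A-I)$ the restriction of $A$ is orthogonal with no eigenvalue $1$, so by the Spectral Theorem this restriction can be chosen in orthogonal normal form $A'$ (consisting only of $R_\theta$ and $-I$ blocks); on $\ker(A-I)$ the map $A$ acts as the identity. Setting $a=\|b_1\|>0$, the affine matrix of $f$ becomes
$$A'\oplus\mathbb{I}_{k-1}\oplus\begin{pmatrix}1&a\\0&1\end{pmatrix},$$
where $k=\dim\ker(A-I)$. The linear block $A'\oplus\mathbb{I}_{k-1}\in O(n-1)$ is again an orthogonal normal form, so we are in type (2).

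The main technical point I expect to be the key obstacle is the choice of $p$ in the fixed-point-free case: one must check that the equation $(A-I)c=-b_2$ is solvable (which is automatic from $b_2\in\mathrm{im}(A-I)$), and verify that $b_1\neq 0$ exactly captures the fixed-point-free condition. Once these are in place, the orthogonality lemma $\ker(A-I)\perp\mathrm{im}(A-I)$ ensures that the two reductions (moving the origin and diagonalizing $A$ on the complement) can be carried out in a single orthonormal reference, which is what makes the resulting matrix genuinely an Euclidean normal form rather than merely block triangular.
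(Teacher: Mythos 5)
Your proof is correct, and every step checks out: the orthogonality of $\ker(A-I)$ and $\mathrm{im}(A-I)$, the solvability of $(A-I)c=-b_2$, the equivalence of $b_1\neq 0$ with the absence of fixed points, and the assembly of the final orthonormal reference are all sound. The route differs from the paper's in presentation rather than in substance, but the difference is worth noting. You work throughout in affine coordinates $f(x)=Ax+b$ on $\R^n$ and locate the good origin by explicitly killing the $\mathrm{im}(A-I)$-component of the translation vector; this is the classical ``normalize the translation part'' argument, and it makes the key mechanism (the orthogonal splitting $\R^n=\ker(A-I)\oplus\mathrm{im}(A-I)$ for $A$ orthogonal) completely explicit. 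The paper instead linearizes $f$ to $\varphi:\R^{n+1}\to\R^{n+1}$, takes the primary decomposition $V=V_R\oplus V_1$ with respect to the eigenvalue $1$, observes $V_R\subset V_0$, and reduces to the unipotent case where $\varphi_0$ is forced to be the identity; the good origin is then implicit, being any point of $\E^n\cap V_1$. The two decompositions coincide on $V_0$ (since $A$ is semisimple, $\mathrm{im}(A-I)=V_R\cap V_0$ and $\ker(A-I)=V_1\cap V_0$), so your origin $c$ is exactly a point of the paper's affine slice $\E^n\cap V_1$. What your version buys is self-containedness and elementarity --- you do not need primary decompositions, and you actually justify the orthogonal splitting that the paper only asserts; what the paper's version buys is uniformity with the Jordan-theoretic framework it uses for the Segre symbol and for the hyperbolic case, where the linear part is no longer semisimple and the generalized eigenspace language becomes unavoidable.
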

\begin{proof}
Define a linear form $w:V\to \RR$ by the projection to the last component. 
Then $V_0=\Ker(w)$. An isometry
$f$ is determined by a linear map $\varphi:V\to V$ such that 
$w\circ \varphi=w$ and $\varphi_0=\varphi|_{V_0}$ is orthogonal. 
Therefore it suffices to show that there exists a Jordan
basis $\{u^i\}_{i=1}^{n+1}$ of $V$ with respect to $\varphi$, 
satisfying $w(u^{n+1})=1$ and $w(u^i)=0$ for all $i\leq n$, 
and such that $\{u^1,\cdots,u^n\}$ is a Jordan orthonormal basis of $V_0$ with respect to $\varphi_0$.
Let $V=V_R\oplus V_1$ be the decomposition of $V$ into $\varphi$-invariant subspaces, where
$V_{1}$ denotes the generalized eigenspace of $V$ corresponding to the eigenvalue $1$.
Then $V_R=\mathrm{Im}(\varphi-I)^s$ for some $s\geq 0$. Since
$w\circ(\varphi-I)=0$, we have $V_R\subset V_0$. Consequently, 
there is a Jordan orthonormal basis of $V_R$ with respect to $\varphi_0|_{V_R}$. 
Moreover, there is an orthogonal decomposition $V_0=V_R\oplus (V_1\cap V_0)$.
Therefore we can assume that $V=V_1$, so that $\varphi$ is a unipotent linear map. 
Since $\varphi_0$ is orthogonal, it is the identity matrix.
Then $f$ is either the identity or a translation. In the first case the proof is 
complete. Assume that $f$ is a translation and let $p=(0,\cdots,0,1)$. 
Then $p-f(p)\neq 0$ and we define $u^n=(f(p)-p)/||f(p)-p||$. 
We complete $u^n$ to an orthonormal basis of $V_0$.
Then $\{u^1,\cdots,u^n,p\}$ is a basis of $V$ satisfying the desired conditions.
The matrix of $f$ in the reference $(u^1,\cdots,u^n;p)$ is $\mathbb{I}_{n-1}\oplus\left(\begin{smallmatrix}
1&a\\
0&1
 \end{smallmatrix}\right)$, where $a=||f(p)-p||$ is a positive real number.
\end{proof}

We next define the Segre symbol of an Euclidean isometry.
The main difference with respect to the orthogonal case is that now we distinguish the elliptic and hyperbolic cases, as well as the blocks of
eigenvalue 1 from the remaining blocks. This distinction will be justified in Section $\ref{seccioorbittypes}$.

\begin{defi}
Let $f\in \mathbf{I}(\EE^n)$. By Theorem $\ref{teoeuclidi}$, $f$ has a normal form $A=A_{R}\oplus A_1$
where $A_R\in \mathrm{O}(n-r)$ is an orthogonal normal form with no eigenvalues equal to 1 and $A_1\in\Euc(r)$ is a unipotent Euclidean normal form
satisfying one of the following:
\begin{enumerate}[1)]
\item \label{eli} \textit{Elliptic}: $A_1=\mathbb{I}_{r+1}$.
\item \label{hipe} \textit{Hyperbolic}:
$A_1=\mathbb{I}_{r-1}\oplus\left(\begin{smallmatrix}1&a\\0&1\end{smallmatrix}\right)$,
where $a>0$.
\end{enumerate}
The \textit{Segre symbol of $f$} is defined by $\sigma_f=[\tilde\sigma;r;t]$, 
where $t\in\{e,h\}$ denotes the type of isometry (elliptic or hyperbolic) and
$\tilde\sigma$ is the Segre symbol of $A_R$.
\end{defi}

\begin{prop}\label{segreclasseseuclidean}
The number of Segre classes $e(n)$ of $\mathbf{I}(\EE^n)$ is given by
$$e(n)=\sum_{i=n-1}^n\left(\sum_{j=0}^{[i/2]} p(j)(i-2j+1)\right),$$
where $p(k)$ is the number of partitions of $k$, and $[k]$ denotes the integer part of $k$.
\end{prop}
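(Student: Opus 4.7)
The plan is to enumerate Segre classes of $\textbf{I}(\E^n)$ by their defining data $\sigma_f=[t;r;\tilde\sigma_R]$: first fix the pair $(t,r)$, then count the admissible orthogonal Segre symbols $\tilde\sigma_R$, and finally sum. Reading the normal forms listed before the definition, in the elliptic case the form $A\oplus\mathbb{I}_{r+1}$ with $A\in O(n-r)$ imposes $0\leq r\leq n$; in the hyperbolic case the form $A\oplus\mathbb{I}_{r-1}\oplus\left(\begin{smallmatrix}1&a\\0&1\end{smallmatrix}\right)$ with $A\in O(n-r)$ imposes $1\leq r\leq n$, since the $2\times 2$ Jordan block already contributes one vector to $\Ker(\varphi_0-I)$.

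For a fixed admissible pair $(t,r)$, the matrix $\varphi_R\in O(n-r)$ has no eigenvalue $+1$, so its orthogonal normal form is
$$R_{\theta_1}^{\oplus n_1}\oplus\cdots\oplus R_{\theta_s}^{\oplus n_s}\oplus(-\mathbb{I}_m)$$
with $n_1\geq\cdots\geq n_s\geq 1$, distinct angles $\theta_i\in(0,\pi)$, and $2(n_1+\cdots+n_s)+m=n-r$. Since the Segre symbol discards the actual values $\theta_i$, the only data it retains is the tuple $(n_1,\ldots,n_s)$ and the multiplicity $m$; letting $j=n_1+\cdots+n_s$, the value $m=n-r-2j$ is forced by dimension and $(n_1,\ldots,n_s)$ ranges over all partitions of $j$, yielding $p(j)$ choices. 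Hence the number of admissible $\tilde\sigma_R$ for given $(t,r)$ equals $\sum_{j=0}^{[(n-r)/2]}p(j)$.

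Summing over $r$ and swapping the order of summation, for each fixed $j$ the elliptic contribution allows $r\in\{0,\ldots,n-2j\}$ (requiring $j\leq[n/2]$ and giving weight $p(j)(n-2j+1)$), while the hyperbolic contribution allows $r\in\{1,\ldots,n-2j\}$ (requiring $j\leq[(n-1)/2]$ and giving weight $p(j)(n-2j)$). Adding yields
$$e(n)=\sum_{j=0}^{[n/2]}p(j)(n-2j+1)+\sum_{j=0}^{[(n-1)/2]}p(j)(n-2j),$$
which upon setting $i=n$ and $i=n-1$ is exactly $\sum_{i=n-1}^n\sum_{j=0}^{[i/2]}p(j)(i-2j+1)$.

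The argument involves no serious mathematical obstacle; it is pure bookkeeping. The subtleties to watch are the asymmetric lower bound $r\geq 1$ in the hyperbolic case and the fact that $\tilde\sigma_R$ by definition excludes the $+1$ eigenvalue part of $\varphi$, so that the $-1$ multiplicity in the normal form of $\varphi_R$ is uniquely determined by $n-r-2j$ and no overcounting occurs between the orthogonal Segre symbol and the identity blocks of the Euclidean normal form.
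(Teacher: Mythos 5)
Your proof is correct and follows essentially the same route as the paper: split by type, count the orthogonal Segre symbols with no eigenvalue $+1$ as $\sum_j p(j)$, and sum over $r$. The only difference is presentational — the paper invokes the shift $e^h(n)=e^e(n-1)$ and leaves the ``combinatoric calculation'' of $e^e(n)$ unstated, whereas you carry out that double sum (and the hyperbolic count) explicitly; your bookkeeping of the ranges $0\leq r\leq n$ versus $1\leq r\leq n$ is accurate.
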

\begin{proof}
Let $e_{e}(n)$, $e_h(n)$ denote the number of Segre classes of elliptic and hyperbolic Euclidean isometries respectively.  
By definition of Euclidean normal forms $e_h(n)=e_e(n-1)$. Therefore
we have $e(n)=e_e(n)+e_h(n)=e_e(n)+e_e(n-1)$.
It remains to show that
$$e_e(n)=\sum_{j=0}^{[n/2]} p(j)(n-2j+1).$$
Indeed, the sum is taken over the number $j$ of blocks of type $R_\theta$. The partition number $p(j)$ accounts for the
combinations of blocks of such type, while $(n-2j+1)$ accounts for the possible combinations $-\mathbb{I}_{k}\oplus \mathbb{I}_{r}$,
with $k+r=n-2j$.
\end{proof}

\subsection{Preliminaries on Lorentz spaces and Lorentz groups}
For the rest of this section we fix a real vector space $V$ of dimension $n+1$ together with a
non-degenerate symmetric bilinear form $Q$ over $V$ of signature $(1,n)$.
The pair $(V,Q)$ is called a \textit{Lorentz space}. We will denote by $Q(x):=Q(x,x)$ the associated quadratic form.

\begin{defi}
A vector $v\in V$ is called \textit{space-like} if $Q(v)>0$, it is called \textit{time-like} if $Q(v)<0$, and it is called \textit{light-like}
if $Q(v)=0$. 

A linear subspace $U\subset V$ is said to be \textit{time-like} if it has a time-like vector, 
\textit{space-like} if every nonzero vector in $U$ is space-like, and \textit{light-like} otherwise.
\end{defi}

\begin{lem}[\cite{Ba}, Prop. 6.2]\label{condics} If two linearly independent vectors of $V$ are orthogonal then at least one of them is space-like.
\end{lem}
\begin{proof}Assume that $Q(u)\leq 0$.
We can write $Q(u,v)=-u_0v_0+\sum u_iv_i=0$.
Since $u_0\neq 0$ we have $v_0=\sum u_iv_i/u_0$.
Then $Q(v)\geq 0$, with equality only if $Q(u)=0$. If $u$ and $v$ are linearly independent then $v$ is space-like.
\end{proof}

\begin{defi}
An \textit{isometry of $(V,Q)$} is a linear endomorphism $T\in \mathrm{End}(V)$ such that $Q(Tu,Tv)=Q(u,v)$ for all $u,v\in V$. 
\end{defi}
Denote by $\mathrm{O}(Q)$ the group of isometries of $(V,Q)$.

\begin{lem}\label{spacetime}
Let $T\in \mathrm{O}(Q)$. If $W$ is a space-like $T$-invariant subspace of $V$ then $W^{\bot}$ is a $T$-invariant Lorentz space and $V=W\oplus W^{\bot}$.
\end{lem}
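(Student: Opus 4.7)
The plan is to verify three assertions in succession: (a) the algebraic direct sum decomposition $V = W \oplus W^{\bot}$, (b) the $T$-invariance of $W^{\bot}$, and (c) that $W^{\bot}$ is a Lorentz subspace, i.e.\ $Q|_{W^{\bot}}$ is non-degenerate of signature $(1,\dim W^{\bot}-1)$.

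First I would establish (a). Non-degeneracy of $Q$ on $V$ yields the dimension identity $\dim W + \dim W^{\bot} = \dim V$, so it suffices to check $W \cap W^{\bot} = \{0\}$. Any $v$ in this intersection satisfies $Q(v,v)=0$, but $W$ is space-like, hence $v=0$. For (b), note that $T|_W \colon W \to W$ is a bijection because $W$ is finite-dimensional and $T$ is injective on $V$, so $T^{-1}(W)=W$. Given $v \in W^{\bot}$ and $w\in W$, use $T \in O(Q)$ to compute
\[
Q(Tv,w) = Q(Tv, T(T^{-1}w)) = Q(v, T^{-1}w) = 0,
\]
the last equality because $T^{-1}w \in W$. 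Hence $Tv \in W^{\bot}$.

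For (c), non-degeneracy of $Q|_{W^{\bot}}$ is automatic from the orthogonal decomposition $V = W \oplus W^{\bot}$ combined with non-degeneracy of $Q$ on $V$ and positive-definiteness of $Q|_W$. It remains to exhibit a time-like vector in $W^{\bot}$. Pick any time-like vector $t \in V$, which exists since $V$ has signature $(1,n)$, and decompose $t = w + w'$ with $w \in W$, $w' \in W^{\bot}$. Since $W \perp W^{\bot}$, we get $Q(t) = Q(w) + Q(w')$ with $Q(w) \geq 0$ and $Q(t) < 0$, forcing $Q(w') < 0$. Thus $W^{\bot}$ contains a time-like vector, and by Sylvester's law its signature is $(1, \dim W^{\bot}-1)$.

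The argument is essentially routine linear algebra; the only point that requires care is keeping track of where non-degeneracy of $Q$ is invoked, both in the dimension count for the direct sum and in the confirmation that $Q|_{W^{\bot}}$ is itself non-degenerate. No appeal to Lemma~\ref{condics} is strictly needed, though it can replace the Sylvester argument if one prefers.
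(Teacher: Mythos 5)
Your proof is correct and follows essentially the same route as the paper's: triviality of $W\cap W^{\bot}$ from space-likeness, $T$-invariance via $Q(Tv,w)=Q(v,T^{-1}w)$, and time-likeness of $W^{\bot}$ forced by the time-likeness of $V$. You simply spell out details the paper leaves implicit (that $T^{-1}(W)=W$, and the explicit decomposition of a time-like vector to locate one in $W^{\bot}$), which is fine.
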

\begin{proof}
Since $W$ is space-like we have $W\cap W^{\bot}=0$ and $W\oplus W^{\bot}=V$. Since $T$ is an isometry, $W^\bot$ is $T$-invariant. Indeed, if
$u\in W^\bot$ and $v\in W$, then $Q(Tu,v)=Q(u,T^{-1}v)=0$. Therefore $Tu\in W^{\bot}$. Since $V$ is time-like and $W$ is space-like, $W^{\bot}$
must be time-like, and hence a Lorentz space.
\end{proof}
Denote by $(-)^c$ the scalar-extension functor from $\RR$ to $\CC$.
If $T$ is an isometry of $(V,Q)$ then $T^c$ is an isometry of $(V^c,Q^c)$. 
Moreover, conjugacy in $\CC$ induces an $\RR$-automorphism of $V^c$, which we denote by $v\mapsto \bar v$, as it is customary.
We have $\RR$-linear maps
$\mathcal{R},\mathcal{I}:V^c\to V\otimes \RR\cong V,$
defined by $$\mathcal{R}(v)={{1}\over{2}}(v+\bar v)\text{ and }\mathcal{I}(v)={{1}\over{2}}(v-\bar v).$$

We next study the orthogonality of the generalized eigenspaces of $T^c$.
The following result is possibly well known. We give an elementary proof.
\begin{lem}\label{le2}
Let $T\in \mathrm{O}(Q)$ and let $\lambda,\mu\in \CC$ such that $\lambda\mu\neq 1$.
The generalized eigenspaces $V^c_\lambda$ and $V^c_\mu$ of $V^c$ with respect to $T^c$,
corresponding to the eigenvalues $\lambda$ and $\mu$ respectively, are orthogonal: $Q^c(V_\lambda^c,V_\mu^c)=0$.
\end{lem}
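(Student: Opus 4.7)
The plan is to reduce everything to the identity $Q^c(T^c u,T^c v)=Q^c(u,v)$ (valid for every isometry and its complexification), and then to argue by induction on the \emph{heights} of $u$ and $v$ in their generalized eigenspaces.

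More precisely, for $u\in V_\lambda^c$ I would let $p(u)$ denote the least positive integer with $(T^c-\lambda I)^{p(u)}u=0$, and similarly define $q(v)$ for $v\in V_\mu^c$. The claim I want to establish is: \emph{for every $u\in V_\lambda^c$, $v\in V_\mu^c$, one has $Q^c(u,v)=0$ whenever $\lambda\mu\neq 1$.} I will prove this by induction on $N=p(u)+q(v)$. Since each $V_\lambda^c$ is $T^c$-invariant, the vectors $u'=(T^c-\lambda I)u$ and $v'=(T^c-\mu I)v$ lie in $V_\lambda^c$ and $V_\mu^c$ respectively, with $p(u')\leq p(u)-1$ and $q(v')\leq q(v)-1$ (interpreting the zero vector as having height $0$).

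The base case is $p(u)=q(v)=1$, i.e.\ $u,v$ are genuine eigenvectors. Then
\[
Q^c(u,v)=Q^c(T^c u,T^c v)=Q^c(\lambda u,\mu v)=\lambda\mu\, Q^c(u,v),
\]
so $(1-\lambda\mu)Q^c(u,v)=0$ and the hypothesis $\lambda\mu\neq 1$ forces $Q^c(u,v)=0$.

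For the inductive step, write $T^c u=\lambda u+u'$ and $T^c v=\mu v+v'$ and expand by bilinearity:
\[
Q^c(u,v)=Q^c(T^c u,T^c v)=\lambda\mu\, Q^c(u,v)+\lambda\, Q^c(u,v')+\mu\, Q^c(u',v)+Q^c(u',v').
\]
Each of the three mixed terms on the right pairs a vector of $V_\lambda^c$ with a vector of $V_\mu^c$ whose combined height is strictly smaller than $N$, so by the inductive hypothesis all of them vanish. We are left with $(1-\lambda\mu)Q^c(u,v)=0$, and again $\lambda\mu\neq 1$ gives the conclusion. The only mild subtlety is handling the case when one of $u'$ or $v'$ is the zero vector, but then the corresponding term is trivially zero, so the induction still goes through.

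There is essentially no geometric obstacle here: the argument is a standard pairing-of-generalized-eigenspaces computation, and the Lorentz signature of $Q$ plays no role. The one point to be careful about is bookkeeping the heights so that the inductive hypothesis is legitimately applicable to every mixed term; this is the only place where the proof could be written sloppily.
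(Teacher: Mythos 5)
Your proof is correct and follows essentially the same route as the paper: both arguments induct on the sum of the (generalized eigenvector) heights, use the isometry identity $Q^c(T^cu,T^cv)=Q^c(u,v)$, expand $T^cu=\lambda u+u'$ and $T^cv=\mu v+v'$, and kill the three mixed terms by the inductive hypothesis to obtain $(1-\lambda\mu)Q^c(u,v)=0$. The only cosmetic difference is that the paper indexes the induction by the exponents $i,j$ with $u\in\Ker(T-\lambda)^i$, $v\in\Ker(T-\mu)^j$ rather than by exact heights, which is equivalent.
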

\begin{proof}
Throughout this proof we will omit the superscript $c$. Let $F_\lambda^i=\Ker (T-\lambda)^i$, and 
$F_\mu^j=\Ker(T-\mu)^j$. We want to show that $Q(u,v)=0$ for all $u\in F_\lambda^i$ and $v\in F_\mu^j$. 
We will prove it by induction over $i+j$. If $i+j\leq 1$ it is trivial.
The first non-trivial case is $i=j=1$. Let $u\in F_\lambda^1$ and $v\in F_\mu^1$.
Then $Tu=\lambda u$, and $Tv=\mu v$. Therefore
$Q(u,v)=Q(Tu,Tv)=\lambda\mu Q(u,v)=0.$
Assume that $i+j\geq 2$. Let $u_1\in F_\lambda^i$ and $u_2\in F_\mu^j$, and let
$w_1=(T-\lambda)u_1$, $w_2=(T-\mu)u_2$. Since $w_1\in F_\lambda^{i-1}$ and $w_2\in F_\mu^{j-1}$, 
by the induction hypothesis,
$Q(w_1,w_2)=Q(w_1,u_2)=Q(u_1,w_2)=0.$
Therefore
$Q(u_1,u_2)=Q(Tu_1,Tu_2)=Q(w_1+\lambda u_1,w_2+\mu u_2)=\lambda\mu Q(u_1,u_2).$
Since $\lambda\mu\neq 1$ we have $Q(u_1,u_2)=0$.
\end{proof}

\begin{defi}
The \textit{space-time decomposition} of $V$ with respect to an isometry $T\in \mathrm{O}(Q)$ is the decomposition
$V=V_t\oplus V_s$, where $V_s$ is the direct sum of all space-like generalized eigenspaces 
of $V$ with respect to $T$ and $V_t$ is the direct sum of the remaining generalized eigenspaces.
We will call $T_s=T|_{V_s}$ the \textit{spatial component of $T$} and $T_t=T|_{V_t}$,
the \textit{temporal component of $T$}.
\end{defi}
The following properties are a consequence of Lemmas $\ref{spacetime}$ and $\ref{le2}$.
\begin{prop} Let $T\in \mathrm{O}(Q)$ and let $V=V_t\oplus V_s$ be the space-time decomposition 
of $V$ with respect to $T$. 
\begin{enumerate}[i)]
\item The spaces $V_t$ and $V_s$ are $T$-invariant coprime orthogonal subspaces.
\item The space $V_s$ is Euclidean and the space $V_t$ is Lorentzian.
\item The map $T_s$ is an orthogonal isometry, while $T_t$ is a Lorentz isometry.
\end{enumerate}
\end{prop}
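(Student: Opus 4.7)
The plan is to work at the level of the decomposition $V=\bigoplus_\alpha W_\alpha$ of $V$ into real primary components and, after complexification, to extract orthogonality from Lemma \ref{le2}. The key observation is that every space-like real primary component $W_\alpha$ has its eigenvalue(s) on the unit circle: if $\alpha$ is real then $\alpha=\pm 1$, and if $\alpha$ is non-real then $\alpha\bar\alpha=1$. Indeed, Lemma \ref{le2} applied with $\mu=\lambda$ shows that whenever $\lambda^2\neq 1$ the subspace $V_\lambda^c$ is self-orthogonal, and in the non-real case the additional hypothesis $\alpha\bar\alpha\neq 1$ gives $V_\alpha^c\perp V_{\bar\alpha}^c$ as well; in either situation $W_\alpha$ becomes totally isotropic and thus cannot be space-like. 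Consequently, for every space-like eigenvalue $\lambda$ its ``dual'' $1/\lambda$ lies in $\{\lambda,\bar\lambda\}$, hence in the same real primary component.

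For part (i), $T$-invariance of $V_s$ and $V_t$ is automatic since each real primary component is $T$-invariant, and $V=V_s\oplus V_t$ follows from the primary decomposition; the subspaces are coprime by construction, carrying disjoint sets of eigenvalues. For orthogonality, take a space-like component $W_\alpha\subseteq V_s$ and any other real primary component $W_\beta\subseteq V_t$. After complexifying, Lemma \ref{le2} gives $V_\lambda^c\perp V_\mu^c$ unless $\lambda\mu=1$; but then $\mu=1/\lambda$ lies in $\{\alpha,\bar\alpha\}$ by the observation above, contradicting $W_\beta\neq W_\alpha$. Part (ii) now follows: each $Q|_{W_\alpha}$ with $W_\alpha\subseteq V_s$ is positive definite, and the pairwise orthogonality just established promotes this to $V_s$ being Euclidean; since $V_s$ is then nondegenerate we have $V_t=V_s^\perp$, forcing $V_t$ to be nondegenerate of signature $(1,\dim V_t-1)$ by Sylvester's law, i.e.\ a Lorentz space.

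For part (iii), that $T_s$ is an orthogonal isometry and $T_t$ a Lorentz isometry is immediate: $T$ preserves $Q$ and leaves each of $V_s$, $V_t$ invariant. For the proper equivalence, write any time-like vector $v=v_s+v_t$, noting that $Q(v_s)\geq 0$ forces $v_t$ to be time-like, and observe that $Tv=T_sv_s+T_tv_t$. The two connected sheets of the time-cone of $V$ are therefore cut out by a sign condition on the $V_t$-component, so $T$ preserves the chosen sheet if and only if $T_t$ does the analogous thing on $V_t$. The main obstacle is the orthogonality argument in (i), which hinges on the classification of which primary components can be space-like; once the self-duality $1/\lambda\in\{\lambda,\bar\lambda\}$ for space-like eigenvalues is in hand, parts (ii) and (iii) reduce to routine bilinear-algebra bookkeeping.
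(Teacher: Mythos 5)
Your proof is correct and follows the route the paper itself indicates (the paper offers no written proof, merely asserting the proposition as a consequence of Lemmas \ref{spacetime} and \ref{le2}); you have supplied the one genuinely nontrivial detail, namely that a space-like primary component must have eigenvalues with $\lambda=\pm1$ or $|\lambda|=1$, so that the exceptional case $\lambda\mu=1$ in Lemma \ref{le2} never occurs between distinct primary components, which yields the orthogonality in (i). The remaining parts then follow exactly as you argue, with your Sylvester argument for $V_t$ replacing an appeal to Lemma \ref{spacetime} applied to $W=V_s$.
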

We next study the eigenvalues of a Lorentz isometry. Since $T_s$ is orthogonal,
we will only deal with the temporal component $T_t$.
\begin{lem}\label{le3j}
Let $T\in \mathrm{O}(Q)$ be a temporal isometry. Let $\lambda\in \CC$ be an
eigenvalue of $T^c$. Then 
$\lambda$ is real, and $\lambda^{-1}$ is also an eigenvalue of $T$.
\end{lem}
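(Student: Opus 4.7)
The plan is to handle the two claims separately. That $\lambda^{-1}$ is also in the spectrum of $T_t$ follows from the isometry relation $T^tJT=J$: writing $T^{-1}=J^{-1}T^tJ$ shows $T^{-1}$ is similar to $T$, so they share a characteristic polynomial and the spectrum is stable under $\lambda\mapsto\lambda^{-1}$. Once the positivity claim is in hand, Lemma \ref{le2} pairs $V^c_\lambda$ non-trivially with $V^c_{\lambda^{-1}}$, and since this sum is non-degenerate and not space-like, both primary components lie in $V_t^c$; hence $\lambda^{-1}$ is an eigenvalue of $T_t$ itself, not merely of $T$.

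The substantive claim is $\lambda\in\R_{>0}$. Let $\lambda$ be an eigenvalue of $T_t^c$, so that the real form of its primary component is contained in $V_t$ and is in particular not space-like. I would argue by cases. If $\lambda\notin\R$ with $|\lambda|=1$, I consider the $T^c$-invariant Hermitian form $H(u,v):=Q^c(u,\bar v)$ on $V^c_\lambda$ (well-defined and $T^c$-invariant precisely because $\lambda\bar\lambda=1$), and observe that its signature $(r,s)$ matches the signature $(2r,2s)$ of $Q$ on the real form of $V^c_\lambda\oplus V^c_{\bar\lambda}$; a Lorentz space of signature $(1,n)$ cannot contain a subspace with $2s\geq 2$ negative-definite directions, so $s=0$, $H$ is positive-definite, and the real form is space-like, a contradiction. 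The core of this case can also be seen directly: a complex eigenvector $u$ spans a real $2$-plane $W=\mathrm{span}_\R\{\mathcal{R}(u),\mathcal{I}(u)\}$ on which $T$ acts as rotation by $\theta\notin\pi\Z$; the $T|_W$-invariant symmetric bilinear forms are scalar multiples of the Euclidean form, and only the positive-definite scalar is compatible with signature $(1,n)$, since a $2$-dimensional negative-definite or totally isotropic subspace is forbidden. If $\lambda\notin\R$ and $|\lambda|\ne 1$, Lemma \ref{le2} makes $V^c_\lambda,V^c_{\bar\lambda},V^c_{\lambda^{-1}},V^c_{\bar\lambda^{-1}}$ pairwise self-orthogonal except for the inverse pairings, so the real part of their sum carries a $2m$-dimensional totally isotropic subspace (with $m=\dim_\C V^c_\lambda$) and hence has neutral signature $(2m,2m)$, impossible inside $(1,n)$ unless $m=0$. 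Finally, if $\lambda$ is real negative, I take an eigenvector $v\in V_\lambda$: Lemma \ref{le2} makes $V_\lambda$ self-orthogonal when $\lambda\ne -1$; for $\lambda=-1$ with a non-trivial Jordan chain, the identity $Q(v,w)=Q(Tv,Tw)=Q(v,w)-Q(v)$ for a generalized eigenvector $w$ forces $Q(v)=0$; and for $\lambda=-1$ a simple eigenvalue, the non-space-likeness of $V_{-1}$ still yields some $v\in V_{-1}$ with $Q(v)\le 0$. In every subcase I choose $v$ in the future causal cone, whereupon $Tv=\lambda v$ lies in the past cone, contradicting the properness of $T$.

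The main obstacle is Case (i): the zero-form possibility in the elementary reduction must be excluded by invoking that a Lorentz $(1,n)$ space admits no $2$-dimensional totally isotropic subspace (its maximal isotropic subspaces have dimension $\min(1,n)=1$), and the promotion from a single invariant $2$-plane to the entire primary component requires the Hermitian-form passage with its signature book-keeping. Once these signature-theoretic inputs are in place, Case (ii) becomes a direct neutral-signature count and Case (iii) reduces transparently to the oriented-cone interpretation of properness.
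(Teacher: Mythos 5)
Your proof is correct, and its skeleton (Lemma \ref{le2} for orthogonality of primary components, signature constraints of a Lorentz form, the relation $A^tJA=J$ for the inverse eigenvalue, properness for positivity) matches the paper's; but the core step excluding non-real $\lambda$ is executed quite differently. The paper avoids your case split on $|\lambda|$ entirely: for any non-real $\lambda$ it uses $Q^c|_{V^c_\lambda}=Q^c|_{V^c_{\bar\lambda}}=0$ (from Lemma \ref{le2} with $\lambda^2\neq 1$) to compute, for an arbitrary $w\in V^c_\lambda$, that $Q(\mathcal{R}(w))=Q(\mathcal{I}(w))=\tfrac12 Q^c(w,\bar w)$ and $Q(\mathcal{R}(w),\mathcal{I}(w))=0$; Lemma \ref{condics} then forbids two orthogonal independent non-space-like vectors, so both values are positive and the \emph{entire} real primary component $\mathcal{R}(V^c_\lambda)$ is space-like in one stroke. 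This is exactly the ``promotion from a single invariant $2$-plane to the whole primary component'' that you identify as the main obstacle, and it is resolved without any Hermitian form or signature book-keeping. Your route through $H(u,v)=Q^c(u,\bar v)$ is sound and buys a conceptually transparent statement (the doubled signature correspondence), at the cost of the two-case analysis; one small imprecision worth fixing is that $H$ is well-defined, sesquilinear and $T^c$-invariant for \emph{any} $\lambda$ — what $\lambda\bar\lambda=1$ actually buys, via Lemma \ref{le2} and the non-degeneracy of $Q^c$, is that $H$ is non-degenerate on $V^c_\lambda$, which is the property your signature count needs. Your treatment of negative real eigenvalues is more detailed than the paper's one-line appeal to properness, and like the paper it tacitly reads ``proper'' as ``preserves the future causal cone'' (the paper's literal definition is vacuous as stated), which is the intended meaning.
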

\begin{proof}
Assume that $\lambda\in \CC\setminus \RR$ is an eigenvalue of $T^c$ and denote by
$V^c_\lambda$ the generalized eigenspace of $V^c$ of eigenvalue $\lambda$. 
Then $\overline{V}^c_\lambda=V^c_{\bar \lambda}$ is the generalized eigenspace of 
 $\bar\lambda$. Since $\lambda^2,\bar\lambda^2\neq 1$, by Lemma $\ref{le2}$ we
have $Q|_{V_\lambda^c}=Q|_{\bar V_\lambda^c}=0$. Denote by $V_{\lambda\bar\lambda}=\mathcal{R}(V_\lambda^c)$ 
the generalized eigenspace of $V$ corresponding to $p(x)=x^2-2\mathcal{R}(\lambda)x+|\lambda|^2$.
We next show that $V_{\lambda\bar\lambda}$ is space-like. Indeed, if
$v_1\in V_{\lambda\bar\lambda}$, there exists $w\in V_\lambda^c$ such that $v_1=\mathcal{R}(w)$.
Let $v_2=\mathcal{I}(w)$. Then $v_2\in V_{\lambda\bar\lambda}$ and
$$Q(v_1)=Q(v_2)={1\over 2}Q(w,\bar w) \text{ and }Q(v_1,v_2)=Q(\mathcal{R}(w),\mathcal{I}(w))=0.$$
Since $v_1,v_2$ are linearly independent, by Lemma $\ref{condics}$, $Q(v_1),Q(v_2)>0$.
Therefore $V_{\lambda\bar\lambda}$ is space-like. This is a contradiction, since $V=V_t$ is the direct sum of all
non space-like generalized eigenspaces. Hence $\lambda$ must be real.

Let $\lambda$ be a real eigenvalue of $T$ and let $A$ be 
the matrix of $T$ in an orthonormal basis. If $A^tJA=J$ then $J^{-1}A^tJ=A^{-1}$.
Hence $\lambda^{-1}$ is an eigenvalue of $T$.
\end{proof}
\begin{cor}\label{pr4j}
Let $T\in \mathrm{O}(Q)$. Then either $V_t=V_{\pm 1}$ or
$V_t=V_\lambda\oplus V_{\lambda^{-1}}$, where $\lambda\neq \pm 1$ is real.
\end{cor}
\begin{proof}
By Lemma $\ref{le3j}$ we have $V_t$ decomposes into a direct sum of generalized eigenspaces of the type $V_{\pm 1}$ and $V_{\lambda}\oplus V_{\lambda^{-1}}$, 
where $\lambda\neq \pm 1$ is a real eigenvalue. By Lemma $\ref{le2}$, these subspaces are pairwise orthogonal. 
Therefore by Lemma
$\ref{condics}$, all except one are space-like. Since $V_t$ contains no 
space-like generalized eigenspaces, it follows that $V_t=V_{\pm 1}$ or $V_t=V_\lambda\oplus V_{\lambda^{-1}}$.
\end{proof}

We next study the minimal $T$-invariant time-like subspaces of $V_t$. 
These subspaces consist of a variation of the orthogonal indecomposable subspaces introduced in Lemma 2.1 of \cite{GK}.
\begin{teo}\label{teo5}
Let $T\in \mathrm{O}(Q)$ be an isometry. Let $W\subset V_t$ be a minimal $T$-invariant time-like  subspace of $V_t$. 
Then one of the following conditions is satisfied.
\begin{enumerate}[1)]
 \item \label{a} $V_t=V_{\pm 1}$, $\dim W=1$ and $W$ is generated by a time-like vector of eigenvalue $\pm 1$.
\item \label{b}$V_t=V_{\pm 1}$, $\dim W\geq 2$ and $\Ker(T|_W-\pm I)$ is generated by a light-like eigenvector.
\item \label{c}$V_t=V_\lambda\oplus V_{{\lambda^{-1}}}$, where $\lambda\neq\pm 1$ is real, $\dim W=2$ 
and $W$ is generated by two light-like eigenvectors corresponding to the eigenvalues $\lambda$ and $\lambda^{-1}$.
\end{enumerate}
\end{teo}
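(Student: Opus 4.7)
The plan is to use Corollary \ref{pr4j} to split into the two cases $V_t = V_1$ and $V_t = V_\lambda \oplus V_{\lambda^{-1}}$ with $\lambda > 0$, $\lambda \neq 1$. Before starting, I would record the preparatory observation that any time-like subspace $W$ is itself a Lorentz space: given a time-like $v \in W$, Lemma \ref{condics} shows that every nonzero vector of $\langle v\rangle^{\perp}\cap W$ is space-like, so the orthogonal decomposition $W = \langle v\rangle \oplus (\langle v\rangle^{\perp}\cap W)$ endows $Q|_W$ with signature $(1,\dim W - 1)$. In particular, $W$ contains no totally isotropic subspace of dimension greater than $1$.

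In the case $V_t = V_1$, the endomorphism $T|_W$ is unipotent, so $N := T|_W - I$ is nilpotent and $\Ker(N) \neq 0$. If $\dim W = 1$, minimality forces $W$ to be a single eigenline, whose generator must be time-like since $W$ is, yielding case (\ref{a}). When $\dim W \geq 2$, I plan to establish that $\Ker(N)$ is $1$-dimensional and generated by a light-like vector. No nonzero element of $\Ker(N)$ can be time-like, since its span would be a proper $T$-invariant time-like subspace of $W$, contradicting minimality. No element of $\Ker(N)$ can be space-like either: its span would then be space-like and $T$-invariant, and Lemma \ref{spacetime} applied inside the Lorentz space $W$ would split $W$ orthogonally into two proper $T$-invariant pieces, the orthogonal complement being a Lorentz (hence time-like) subspace strictly smaller than $W$, again contradicting minimality.

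It then remains to rule out two linearly independent light-like vectors $u_1, u_2 \in \Ker(N)$. If $Q(u_1, u_2) = 0$, then $\langle u_1, u_2\rangle$ is a $2$-dimensional totally isotropic subspace of the Lorentz space $W$, which is impossible. If instead $Q(u_1, u_2) \neq 0$, then $Q(\alpha u_1 + \beta u_2) = 2\alpha\beta\, Q(u_1, u_2)$, which becomes negative for a suitable choice of signs of $\alpha,\beta$, producing a time-like element of $\Ker(N)$ contrary to the previous step. Hence $\dim \Ker(N) = 1$ with light-like generator, establishing case (\ref{b}).

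For the case $V_t = V_\lambda \oplus V_{\lambda^{-1}}$, applying Lemma \ref{le2} with $\mu = \lambda$ (legitimate because $\lambda^2 \neq 1$) shows $Q|_{V_\lambda} = 0$, and symmetrically $Q|_{V_{\lambda^{-1}}} = 0$. Since the ambient Lorentz space admits no totally isotropic subspace of dimension greater than $1$, each of $V_\lambda$ and $V_{\lambda^{-1}}$ is exactly $1$-dimensional and spanned by a light-like eigenvector. The $T$-invariant subspaces of the $2$-dimensional $V_t$ are then $0$, $V_\lambda$, $V_{\lambda^{-1}}$ and $V_t$ itself; only the last contains a time-like vector, so $W = V_t$, giving case (\ref{c}). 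The main obstacle in this plan is the analysis of $\Ker(N)$ in the unipotent $\dim W \geq 2$ case, particularly the step excluding multiple light-like eigenvectors, which requires combining the signature-based bound on isotropic subspaces with the explicit computation of $Q$ on linear combinations of two light-like vectors.
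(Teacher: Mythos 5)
Your proposal is correct and follows essentially the same route as the paper: split via Corollary \ref{pr4j}, analyze $\Ker(T|_W-I)$ using minimality together with Lemmas \ref{condics} and \ref{spacetime} in the unipotent case, and use the total isotropy of the $\lambda$- and $\lambda^{-1}$-eigenspaces (via Lemma \ref{le2}) in the hyperbolic case. The only differences are organizational — you record up front that a time-like subspace has signature $(1,\dim W-1)$ and split the light-like-kernel analysis by whether $Q(u_1,u_2)$ vanishes, where the paper instead deduces $Q|_U=0$ by polarization and applies Lemma \ref{condics} directly — but these are cosmetic.
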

\begin{proof}
We consider the two cases of Corollary $\ref{pr4j}$.

Assume first that $V_t=V_1$, and let $U=\Ker(T|_W-I)$. 
We first show that $U$ has no space-like vectors. 
Let $u\in U$. If $Q(u)>0$ then $Sp\{u\}^{\bot}\cap W$ is a time-like $T$-invariant proper subspace of $W$.
This contradicts the minimality of $W$. Hence $Q(u)\leq 0$.
Assume there exists $u\in U$ such that $Q(u)<0$. Then $Sp\{u\}$ is time-like and $T$-invariant,
and by the minimality of $W$, $Sp\{u\}=W$. Therefore $(\ref{a})$ is satisfied.
Assume that every vector in $U=\Ker(T|_W-I)$ is light-like. 
Then $Q|_{U}=0$ and by Lemma $\ref{condics}$, $\dim U=1$. Since $U$ is light-like,
$U\subsetneq W$, and $\dim W\geq 2$. Therefore $(\ref{b})$ is satisfied.
If $V_t={-1}$ we proceed analogously.

Assume now that $V_t= V_{\lambda}\oplus V_{\lambda^{-1}}$, 
where $\lambda\neq\pm 1$ is a real eigenvalue. Let $U_\lambda=\Ker(T-\lambda)$.
Then $\dim U_\lambda=\dim U_{\lambda^{-1}}=1$.
Indeed, by Lemma $\ref{le2}$, $Q|_{U_\lambda}=Q|_{U_{\lambda^{-1}}}=0$, and by Lemma $\ref{condics}$
their dimension is $\leq 1$. Since both $U_\lambda$ and $U_{{\lambda^{-1}}}$ are not zero, the 
subspace $U_\lambda\oplus U_{\lambda^{-1}}$ has dimension 2, and it is time-like and $T$-invariant.
By the minimality of $W$ we have $W=U_\lambda\oplus U_{\lambda^{-1}}$, and $(\ref{c})$ is satisfied.
\end{proof}

\subsection{Isometries of the hyperbolic space}\label{hypernor}
Let $V=\RR^{n+1}$ and $Q(x,y)=- x_0y_0+\sum_{i=1}^n x_iy_i$. 
Then $Q$ has signature $(1,n)$ and we denote $\mathrm{O}(1,n):=\mathrm{O}(Q)$. 
The pair $(\RR^{n+1},Q)$ is the \textit{standard Lorentz space} and is denoted by $\RR^{1,n}$.
In the canonical basis of $\RR^{n+1}$ the matrix of $Q$ is $J=-\mathbb{I}_1\oplus \mathbb{I}_n$
and $\mathrm{O}(1,n)$ is identified with the group of real square matrices $A$ of size $n+1$ such that
$A^t JA=J.$

The hyperboloid model of the $n$-hyperbolic space is
$$\HH^n=\{x\in \RR^{1,n};Q(x)=-1, x_0>0\}.$$
The metric of $\RR^{1,n}$ induces a Riemannian metric on $\HH^n$.

The isometry group $\mathbf{I}(\HH^n)$ is isomorphic to the \textit{positive Lorentz group} $\mathrm{O}^+(1,n)$.
These are the matrices of $\mathrm{O}(1,n)$ having a positive entry in the upper left corner.

\begin{defi}
A \textit{Lorentzian normal form} is a matrix of one of the following types:
\begin{enumerate}[1)] 
\item \textit{Elliptic}: $1\oplus A
$, where $A\in \mathrm{O}(n)$ is an orthogonal normal form.
\item \textit{Parabolic}: $\Theta\oplus A
$, where $A\in \mathrm{O}(n-2)$ is an orthogonal normal form and $\Theta=\left(\begin{smallmatrix}
3/2&1&\text{-}1/2\\
1&1&\text{-}1\\
1/2&1&1/2
 \end{smallmatrix}\right).$
 \item \textit{Hyperbolic}:
$\Omega_t\oplus A$, where $\Omega_t=\left(
\begin{matrix}
\cosh t&\sinh t\\
\sinh t& \cosh t
\end{matrix}
\right),\,t\neq 0$ and $A\in \mathrm{O}(n-1)$ is an orthogonal normal form.
\end{enumerate}
\end{defi}
Note that besides the normal forms expected by analogy with the orthogonal group,
which can be reduced to the diagonal form in
the field of complex numbers, there is a normal form, corresponding to the parabolic case,
whose first block is a $3\times 3$ matrix which cannot be diagonalized even in the field of complex
numbers.
\begin{teo}\label{canoniqueshy}
Every element of $\mathrm{O}^+(1,n)$ is conjugate to a Lorentzian normal form, unique up to block permutation.
\end{teo}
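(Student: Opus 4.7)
The plan is to combine the space-time decomposition with Corollary \ref{pr4j} and Theorem \ref{teo5}, producing the three Lorentzian normal forms from the three geometric cases that arise. First I would split $V = V_s \oplus V_t$ into the Euclidean spatial part and the Lorentzian temporal part; since $T|_{V_s}$ is orthogonal, the classical spectral theorem for $O(m)$ puts it in orthogonal normal form, so the real work reduces to normalizing $T_t := T|_{V_t}$. The spatial contribution, together with any Euclidean orthogonal complement split off inside $V_t$, will eventually be folded into the block $A$ appearing in each of the three normal form types.

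For $T_t$, the case $V_t = V_\lambda \oplus V_{\lambda^{-1}}$ of Corollary \ref{pr4j} is handled quickly: applying Lemma \ref{le2} with both indices equal to $\lambda$ (valid since $\lambda^2 \neq 1$) shows $V_\lambda$ is totally isotropic, and Lemma \ref{condics} bounds any totally isotropic subspace of a Lorentz space to dimension one. Hence $V_t$ is two-dimensional, spanned by light-like eigenvectors $v, w$ with eigenvalues $\lambda, \lambda^{-1}$. Normalizing so that $Q(v, w) = 1/2$ and taking $e_0 = v - w$, $e_1 = v + w$, one checks $(e_0, e_1)$ is a Lorentz orthonormal basis; a direct computation gives $T e_0 = \cosh t \cdot e_0 + \sinh t \cdot e_1$ and $T e_1 = \sinh t \cdot e_0 + \cosh t \cdot e_1$ with $e^t = \lambda$, realizing $T_t$ as $\Omega_t$ and producing type $(\ref{tp3})$.

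In the unipotent case $V_t = V_1$, subcase $(\ref{a})$ of Theorem \ref{teo5} provides a time-like eigenvector $u$; then $e_0 = u/\sqrt{-Q(u)}$ is time-like unit, and its orthogonal complement $e_0^\perp \cap V_t$ is Euclidean and $T$-invariant. The restriction of $T_t$ to this Euclidean subspace is a unipotent orthogonal map, hence the identity (eigenvalues on the unit circle all equal to $1$), so $T_t = I$. Applying the spectral theorem to $T$ on the Euclidean space $(e_0^\perp \cap V_t) \oplus V_s$ produces the normal form $1 \oplus A$ of type $(\ref{tp1})$.

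The main obstacle is subcase $(\ref{b})$, where I must construct the block $\Theta$. Writing $T_t = I + N$ with $N$ nilpotent and taking a Jordan chain $v_1, \ldots, v_k$ with $N v_j = v_{j-1}$ and $v_0 = 0$, the isometry relation $Q(Tu, Tv) = Q(u, v)$ yields the recursion
\[
Q(v_i, v_{j-1}) + Q(v_{i-1}, v_j) + Q(v_{i-1}, v_{j-1}) = 0.
\]
Induction shows $Q(v_i, v_j) = 0$ whenever $i + j \leq k$, and on the anti-diagonal $i + j = k+1$ the relation becomes $Q(v_i, v_{k+1-i}) = (-1)^{i-1} Q(v_1, v_k)$; symmetry of $Q$ then forces $k$ to be odd. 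A signature computation on the resulting canonical form shows that a block of size $k \geq 5$ would contribute at least two negative eigenvalues to $Q$, and similarly two distinct size-$3$ blocks would contribute two negatives, both incompatible with the Lorentz signature $(1, m)$ of $V_t$. Hence $T_t$ has Jordan type $J_3 \oplus I_{m-2}$, with a unique size-$3$ block spanning a Lorentzian subspace $W'$ of signature $(1, 2)$, and an orthogonal complement in $V_t$ that is space-like and $T$-fixed. Since non-trivial unipotent elements of $O(1, 2)$ form a single conjugacy class (one constructs a Lorentz orthonormal basis of $W'$ explicitly by rescaling so that $Q(v_1, v_3) = -1$ and adjusting $v_3$ by a multiple of $v_1$ to kill the free diagonal entry), $T_t|_{W'}$ is realized as $\Theta$ in this basis. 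Combining with the spectral theorem on $(W')^\perp \cap V_t \oplus V_s$ yields $\Theta \oplus A$, type $(\ref{tp2})$. Properness follows in each case from the positive upper-left entry ($1$, $\cosh t$, or $3/2$).
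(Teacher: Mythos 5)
Your proposal follows the paper's skeleton --- space-time decomposition, spectral theorem on the spatial part, and a three-way case split on $T_t$ --- and your elliptic and hyperbolic cases are essentially the paper's arguments (the paper normalizes $Q(u,v)=-1$ and takes $\tfrac{1}{\sqrt2}(u\pm v)$; your normalization $Q(v,w)=\tfrac12$ with $e_0=v-w$, $e_1=v+w$ is the same computation). Where you genuinely diverge is the parabolic case. The paper exploits the minimality of the time-like invariant subspace $W$: since $\Ker(T|_W-I)$ is one-dimensional, $T|_W$ is a single Jordan block, three explicit vectors $u,v,w$ with a computed Gram matrix show $W=F_3$, and an explicit change-of-basis matrix $P$ produces $\Theta$. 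You instead classify all unipotent Lorentz isometries directly by Jordan chains, using the alternating antidiagonal Gram entries to force odd block sizes and a signature count to exclude blocks of size $\geq 5$ or two blocks of size $3$. This is the standard classification of unipotent elements of $O(p,q)$ specialized to $p=1$; it buys a statement independent of the minimal-subspace machinery, while the paper's route buys shorter, fully explicit computations.

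The one step you should not leave implicit is the nondegeneracy of $Q$ on the span of a nontrivial Jordan chain, equivalently $Q(v_1,v_k)\neq 0$: both your parity argument (``symmetry forces $k$ odd'') and your signature count silently assume it, and for general orthogonal groups it can fail --- the blocks then occur in isotropically paired couples, which is exactly how even-size blocks arise. In the Lorentz case the repair is cheap: since $Tu=u$ for $u\in\Ker N$ gives $Q(u,Nv)=0$ for all $v$, the subspace $\Ker N\cap \mathrm{Im}\, N$, which contains the bottom vector $v_1$ of every nontrivial chain, is totally isotropic, hence at most one-dimensional by Lemma \ref{condics}; so there is at most one nontrivial block, and nondegeneracy of $Q$ on $V_1$ then forces $v_1$ to pair with the top of its own chain. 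With that inserted your argument closes. (Your closing remark that properness ``follows from the positive upper-left entry'' is backwards --- properness is the hypothesis, already used in Lemma \ref{le3j} to make $\lambda>0$ and $t=\log\lambda$ well defined --- but this is harmless.)
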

\begin{proof}
Let $V=V_t\oplus V_s$ be the space-time decomposition of $V$ with respect to $T$. Since
$Q(V_t,V_s)=0$ we can find an orthogonal basis of $V_s$ and $V_t$ separately. Since $T_s$ is orthogonal,
it has an orthogonal normal form. We can assume that $V=V_t$ and $T=T_t$.
We proceed by induction over $n$. For $n=0$ it is trivial. Assume that $n>0$.
Let $W$ be a minimal time-like $T$-invariant subspace of $V$. If $\dim W<\dim V$ then $W^{\bot}$ is space-like,
and the theorem is true for both $T|_W$ and $T|_{W^{\bot}}$. We can assume that $W=V$. We consider the cases of Theorem $\ref{teo5}$
with positive eigenvalues.

Case 1. If $W$ is generated by a time-like eigenvector of eigenvalue 1, then $T$ is the identity.

Case 2. Assume that $\Ker(T-I)$ is generated by a light-like vector, and that $\dim W\geq 2$. Let $N=T-I$. Since $T$ is an isometry we have
$Q(Nx,Ny)=-Q(Nx,y)-Q(x,Ny).$
Let $F_k=\Ker N^k$. Note that $F_1=\Ker(T-I)$ has dimension $1$ and it is generated by a light-like vector.
Therefore the Jordan normal form of $T$ consists of a single Jordan block, and the $T$-invariant subspaces
are of the form $F_k$ for some $k$. Let us show that $W=F_3$.

If $F_1=F_2$ then $F_1=W$. Since $\dim W\geq 2$ we get a contradiction.
Therefore $F_1\subsetneq F_2$ and there exists $v\in F_2\setminus F_1$.
Let $u=N(v)$. Then $0=Q(u)=Q(Nv)=-2Q(v,Nv)=-2Q(v,u).$
Consequently $Q(u,v)=0$ and by Lemma $\ref{condics}$, $Q(v)>0$. 
We can assume that $Q(v)=1$. Then $W\neq F_2$ and $F_2\subsetneq F_3$, 
since if $F_2=F_3$ we would have $F_2=W$, but $F_2$ is not time-like.
Let $w\in F_3\setminus F_2$. We can assume that $v=N(w)$. Then
$$0=Q(u,v)=Q(N^2w,Nw)=-Q(w,N^2w)-Q(Nw,Nw)=-Q(u,w)-1.$$
Therefore $Q(w,u)=-1\neq 0$. By adding to $w$ a multiple of $u$ we can assume that $Q(w)=0$. Then
$Q(v)=Q(Nw)=-2Q(w,Nw)=-2Q(w,v),$
and hence $Q(v,w)=-{1\over 2}$. The subspace $F_3=Sp\{ u,v,w\}$ is time-like and $T$-invariant. 
By the minimality of $W$ we get $F_3=W$.

The matrix of $T$ in the basis $\{u,v,w\}$ is a
Jordan block of size 3 and eigenvalue 1. Let
$$G:=\left(\begin{smallmatrix}
0&0&\text{-}1\\
0&1&\text{-}{1\over2}\\
\text{-}1&\text{-}{1\over2}&0
          \end{smallmatrix}\right)\text{ and }P:=\left(
\begin{smallmatrix}
{3\over 8}&0&{5\over 8}\\
{1\over 2}&1&\text{-}{1\over 2}\\
1&0&\text{-}1
\end{smallmatrix}\right).$$
The matrix of $Q$ in the basis $\{u,v,w\}$
is $G$.
Let $J=-\mathbb{I}_1\oplus \mathbb{I}_2$.
Since $P^tGP=J$, the basis
$\{e_0,e_1,e_2\}$ of $W$ defined by $(e_0\,e_1\,e_2)=(u\,v\,w)P$ 
is a Lorentz orthonormal basis of $W$. The matrix of $T$ in this basis is
$$\Theta:=P^{-1} \left(
\begin{smallmatrix}
1&1&0\\
0&1&1\\
0&0&1
\end{smallmatrix}\right)P=\left(
\begin{smallmatrix}
{3\over 2}&1&\text{-}{1\over 2}\\
{1}&1&\text{-}{1}\\
{1\over 2}&1&{1\over 2}
\end{smallmatrix}\right).
$$
Case 3. Assume $W=Sp\{ u,v\}$, where $u,v$ are light-like eigenvectors of 
eigenvalues $\lambda$ and $\lambda^{-1}$ respectively, where $\lambda\neq 1$ 
is real positive. By Lemma $\ref{condics}$, $Q(u,v)\neq 0$, and we can choose
$u,v$ such that $Q(u,v)=-1$. Then the basis of $W$ given by 
$\{{1\over{\sqrt 2}}(u+v),{1\over{\sqrt 2}}(u-v)\}$ is Lorentz-orthonormal and the matrix of $T$ in this basis is 
$$\Omega_t:=\left(\begin{matrix}
\cosh t&\sinh t\\
\sinh t& \cosh t
                 \end{matrix}\right)
,\text{ where }t=\log \lambda.$$
By considering negative eigenvalues in the above three cases, we obtain the same normal forms with a change of sign,
thus giving non-positive isometries.
\end{proof}

\begin{defi}
Let $f\in \mathbf{I}(\HH^n)$. By Theorem $\ref{canoniqueshy}$,
$f$ has a normal form $T=T_t\oplus T_s$
where $T_s\in \mathrm{O}(n+1-r)$ is an orthogonal normal form and $T_t\in \mathrm{O}^+(1,r-1)$ satisfies one of the following:
\begin{enumerate}[1)]
 \item \label{el1}\textit{Elliptic}: $T_t=\mathbb{I}_r$ with $1\leq r\leq n+1$.
\item \label{pa1}\textit{Parabolic}: $T_t=\Theta\oplus \mathbb{I}_{r-3}$ with $3\leq r\leq n+1$.
\item \label{hy1}\textit{Hyperbolic}: $T_t=\Omega_t$ and $r=2$.
\end{enumerate}
The \textit{Segre symbol of $f$} 
is defined by $\sigma_f=[r;\tilde\sigma;t]$, where $\tilde\sigma$ is the Segre symbol of $T_s$
and $t\in\{e,p,h\}$ denotes the type of isometry (elliptic, parabolic or hyperbolic).
\end{defi}

The following result is obtained in \cite{GK} by other methods, in the context of $z$-classes.
\begin{prop}
The number of Segre classes $h(n)$ of $\mathbf{I}(\HH^n)$ is given by
$h(n)=h_e(n)+h_p(n)+h_h(n)$, where $h_e(n)$, $h_p(n)$ and $h_h(n)$ denote the number of Segre classes of elliptic, 
parabolic and hyperbolic isometries respectively and
$$h_e(n)=\sum_{j=0}^{[n/2]} p(j)(n-2j+1);\,h_p(n)=h_e(n-2);\, h_h(n)=s(n-2).$$
Here $p(k)$ is the partition number of $k$ and $s(k)$ is the number of Segre classes of $\mathbf{I}(\Sp^k)$.
\end{prop}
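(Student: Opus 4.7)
My plan is to split $h(n)=h^e(n)+h^p(n)+h^h(n)$ according to the type of isometry and to count each contribution separately using the normal-form description of Theorem~\ref{canoniqueshy}. A Segre symbol $[t;r;\tilde\sigma_s]$ is determined by the type $t$, the integer $r=\dim V_t$ (subject to case-dependent constraints), and the Segre symbol $\tilde\sigma_s$ of the orthogonal spatial part $T_s$, which has size $m:=n+1-r$; the counting therefore reduces to enumerating admissible orthogonal Segre symbols of prescribed size and then summing over $r$.

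For the elliptic and parabolic cases, the first step is to invoke Corollary~\ref{pr4j} to conclude that $V_t=V_1$. Consequently the entire $+1$-primary component of $T$ lies in $V_t$, and $T_s$ has no $+1$ eigenvalue; an orthogonal normal form of $T_s$ therefore contains only rotation blocks $R_\theta$ together with at most a single block $-\mathbb{I}_{m-2j}$, where $2j$ is the total size of the rotation part. For fixed $m$, the partition of $j$ among the rotation pairs gives $p(j)$ choices and the sign block is then determined, so the number of admissible symbols is $\sum_{j=0}^{[m/2]}p(j)$. Summing over the admissible range of $r$ --- namely $1\le r\le n+1$ in case (i) and $3\le r\le n+1$ in case (ii) --- and switching the order of summation gives
\[
h^e(n)=\sum_{j=0}^{[n/2]}p(j)(n-2j+1)\qquad\text{and}\qquad h^p(n)=h^e(n-2).
\]

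The hyperbolic case is the simplest: $r=2$ is fixed and $T_s$ is unconstrained, since by Corollary~\ref{pr4j} we have $V_t=V_\lambda\oplus V_{\lambda^{-1}}$ with $\lambda\ne 1$, so the $+1$-primary component of $T$ (if any) lies in $V_s$ and $T_s$ can be any orthogonal normal form of size $n-1$. Thus $\tilde\sigma_s$ ranges over all Segre symbols of $O(n-1)\cong\textbf{I}(\Sp^{n-2})$, whose number was established earlier to be $s(n-2)$. Adding the three contributions yields the stated formula.

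The main subtlety is the exclusion of the eigenvalue $+1$ from $T_s$ in cases (i) and (ii): one has to see that in these cases the entire $+1$-primary component of $T$ is non-space-like and hence contained in $V_t$ by the very definition of the space-time decomposition. This is precisely the content of Corollary~\ref{pr4j}, built on Lemmas~\ref{le2} and~\ref{condics}; once that structural ingredient is in place, everything reduces to a partition-counting exercise parallel to the one used for $s(n)$.
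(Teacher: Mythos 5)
Your proposal is correct and follows exactly the route the paper intends (the paper states this proposition without proof, as "a matter of combinatorics" analogous to the spherical and Euclidean cases): split by type, use Corollary~\ref{pr4j} to see that in the elliptic and parabolic cases $V_t=V_1$ so that $T_s$ has no eigenvalue $+1$ and its Segre symbols of size $m$ number $\sum_{j=0}^{[m/2]}p(j)$, then sum over the admissible range of $r$; the hyperbolic case reduces to $s(n-2)$ since $T_s$ is unconstrained. Your identification of the exclusion of the eigenvalue $+1$ as the one genuine subtlety is exactly right, and the partition-counting and index-swapping are carried out correctly.
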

\begin{proof}
The computation of $h_e(n)$ is analogous to the computation $e_e(n)$ of elliptic isometries of the Euclidean space of Proposition $\ref{segreclasseseuclidean}$.
The remaining identities are trivial.
\end{proof}

\section{$z$-classes of isometries}\label{seccioorbittypes}
Let $G$ be a Lie group acting on itself by conjugation. 
The orbit classes in this action are called \textit{centralizer classes} or \textit{$z$-classes}.
Given $x\in G$ we will denote by $Z(x)=\{y\in G; xy=yx\}$ the \textit{centralizer of $x$ in $G$}.
Then two elements $x,y\in G$ are in the same $z$-class if and only if $Z(x)$ is conjugate to $Z(y)$. 
In this section we study the $z$-classes of the isometry groups by
computing the centralizers of normal forms and their dimensions.
The main result is that
the decomposition of isometries by $z$-classes coincides 
with the Segre decomposition.
In particular, the $z$-classes are parameterized by the Segre symbol.
This allows to compute the dimensions of the strata in terms of the Segre symbol.

\subsection{$z$-classes of $\mathbf{I}(\Sp^n)$}
Let us first recall some well known results
on primary decompositions. This will allow to reduce our computations to normal forms having a single primary component.

\begin{prop}\label{commutadiag}
Let $V$ be a finite dimensional vector space over a field $\kk$ and let $A\in End(V)$ be a linear map.
Let $V=V_1\oplus \cdots\oplus V_t$ be a primary decomposition of $V$ with respect to $A$.
If $B\in End(V)$ is a linear map which commutes with $A$, then $V_i$ is $B$-invariant for $1\leq i\leq t$.
\end{prop}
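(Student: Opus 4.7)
The plan is to exploit the fact that each primary component of $V$ with respect to $A$ can be described as the kernel of a specific polynomial in $A$, and that any endomorphism commuting with $A$ automatically commutes with every polynomial in $A$, hence preserves such kernels.

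More concretely, I would first recall the algebraic description of the primary decomposition. Let $m_A(x) = p_1(x)^{n_1} \cdots p_t(x)^{n_t}$ be the factorization of the minimal polynomial of $A$ into powers of distinct irreducible factors $p_i(x) \in \F[x]$. Then, by the standard primary decomposition theorem, the components can be identified as
\begin{equation*}
V_i = \ker\bigl(p_i(A)^{n_i}\bigr), \qquad i = 1, \ldots, t.
\end{equation*}
Write $q_i(x) = p_i(x)^{n_i} \in \F[x]$ so that $V_i = \ker q_i(A)$.

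Next I would use the commutation hypothesis. Since $BA = AB$, an immediate induction on the degree shows that $B$ commutes with every power of $A$, and hence with every polynomial in $A$; in particular $B \, q_i(A) = q_i(A) \, B$ for every $i$. Now take any $v \in V_i$. Then $q_i(A)(v) = 0$, and applying $B$ gives
\begin{equation*}
q_i(A)(Bv) \;=\; B\bigl(q_i(A)v\bigr) \;=\; B(0) \;=\; 0,
\end{equation*}
so $Bv \in \ker q_i(A) = V_i$. This shows $B(V_i) \subseteq V_i$, i.e.\ each $V_i$ is $B$-invariant.

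There is essentially no obstacle here: the argument is purely formal and rests only on (i) the characterization of primary components as kernels of polynomials in $A$, and (ii) the trivial fact that commuting with $A$ forces commuting with $\F[A]$. The only minor point to be careful about is ensuring that the description $V_i = \ker q_i(A)$ is available for an arbitrary field $\F$ (not just algebraically closed), which is exactly what the general primary decomposition theorem in $\F[x]$-module form provides.
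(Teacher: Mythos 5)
Your proof is correct and follows essentially the same route as the paper's: both identify each primary component $V_i$ as the kernel of a polynomial in $A$ (the paper calls it $\mu_i$, the minimal polynomial of $A$ restricted to $V_i$, which is your $p_i(x)^{n_i}$) and then use that $B$ commutes with any polynomial in $A$ to conclude $B(V_i)\subseteq V_i$. No gaps.
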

\begin{proof}
Denote by $\mu_i$ the minimal polynomial of $V_i$ with respect to $A$.
For $x\in V$ we have that
$\mu_i(A)x=0$ if and only if $x\in V_i$. Since $AB=BA$ we have $\mu_i(A)B=B\mu_i(A)$. 
Let $x\in V_i$. Then $\mu_i(A)Bx=B\mu_i(A)x=0$. Hence $Bx\in V_i$, and $V_i$ is $B$-invariant.
\end{proof}

\begin{defi}
We say that a matrix $A\in \mathrm{GL}(n,\kk)$ is \textit{eigenordered} if $A=A_1\oplus\cdots\oplus A_t$ and the
minimal polynomials of $A_i$ are coprime by pairs. We call this an \textit{eigendecomposition} of $A$.
\end{defi}

\begin{cor}\label{eigenordered}
Let $A$ be an eigenordered matrix and let $A=\bigoplus_{i=1}^t A_i\in \mathrm{GL}(n,\kk)$ be 
an eigendecomposition of $A$. Let $B$ be a square $(n\times n)$ matrix such that $[A,B]=0$. Then
$B=\bigoplus_{i=1}^t B_i$, and $[A_i,B_i]=0$ for all $i=1,\cdots,t$.
\end{cor}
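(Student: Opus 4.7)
The plan is to reduce the corollary to Proposition~\ref{commutadiag} by checking that the coprime decomposition supplied by an eigendecomposition is compatible with the primary decomposition of $A$. Identify $A$ with an endomorphism of $V = \F^n$ and the matrix decomposition $A = \bigoplus_{i=1}^t A_i$ with a direct sum decomposition $V = \bigoplus_{i=1}^t V_i$ into $A$-invariant subspaces, where $A_i = A|_{V_i}$ and $\mu_i$ denotes the minimal polynomial of $A_i$. Since the $\mu_i$ are pairwise coprime, the decomposition $V = \bigoplus V_i$ is precisely the decomposition into the kernels $V_i = \Ker \mu_i(A)$; equivalently, each $V_i$ is the direct sum of the primary components of $V$ with respect to $A$ whose associated irreducible factor divides $\mu_i$, and the collections of irreducible factors are disjoint across distinct indices.

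First I would show that each $V_i$ is $B$-invariant. One route is to invoke Proposition~\ref{commutadiag} directly: every primary component of $V$ with respect to $A$ is $B$-invariant, and $V_i$ is a direct sum of such components. An even more direct argument is to note that $[A,B]=0$ implies $[\mu_i(A),B]=0$, so for $x \in V_i = \Ker \mu_i(A)$ one has $\mu_i(A)Bx = B\mu_i(A)x = 0$, i.e.\ $Bx \in V_i$. Having established that each $V_i$ is simultaneously $A$-invariant and $B$-invariant, set $B_i = B|_{V_i}$. Relative to the adapted basis that realises the given block decomposition of $A$, the matrix of $B$ becomes block diagonal, which is exactly the statement $B = \bigoplus_{i=1}^t B_i$. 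Finally, the commutation relation passes to each summand: $[A_i,B_i] = [A,B]|_{V_i} = 0$.

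There is essentially no obstacle here; the only subtlety worth flagging is that Proposition~\ref{commutadiag} is phrased for primary (i.e.\ irreducible-factor) decompositions while an eigendecomposition only requires pairwise coprime minimal polynomials, so one must verify that the latter is a coarsening of the former, as above. Once that observation is in hand, the corollary is immediate.
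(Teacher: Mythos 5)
Your proposal is correct and is essentially the argument the paper intends: the corollary is stated as following directly from Proposition~\ref{commutadiag}, whose proof (the kernel argument $\mu_i(A)Bx = B\mu_i(A)x = 0$) applies verbatim to any decomposition into $A$-invariant subspaces with pairwise coprime minimal polynomials, which is exactly your "more direct" route. The one detail you rightly flag and verify — that the eigendecomposition blocks are the kernels $\Ker\mu_i(A)$, i.e.\ a coarsening of the primary decomposition — is the only content the paper leaves implicit.
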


We denote by $\mathrm{U}(n)$ the subgroup of $\mathrm{GL}(n,\CC)$ of complex unitary square 
matrices of size $n$. Define an inclusion
$j:\mathrm{GL}(n,\CC)\to \mathrm{GL}(n,2\RR)$ as follows:
given $M=(m_{kl})\in \mathrm{GL}(n,\CC)$ let $m_{kl}=a_{kl}+ib_{kl}$, where $a_{kl},b_{kl}\in\RR$. Then
$j(M)$ is the block matrix $j(M)=\left(\begin{smallmatrix}a_{kl}&\text{-}b_{kl}\\b_{kl}&a_{kl}\end{smallmatrix}\right)$.

We obtain the following characterization of centralizers of orthogonal normal forms.
\begin{prop}\label{stabort}
Let $A\in \mathrm{O}(n)$ be a normal form and let $\tilde\sigma_A=[(n_1\bar{n}_1), \cdots,(n_s\bar{n}_s),m_1,m_2]$ be its Segre symbol.
Then 
$Z(A)=\prod_{i=1}^s j(\mathrm{U}(n_i))\times \mathrm{O}(m_1)\times \mathrm{O}(m_2)$ and
$$\dim Z(A)=\sum_{i=1}^s n_i^2+{{1}\over{2}}\sum_{i=1}^2 m_i(m_i-1).$$
\end{prop}
\begin{proof}
Let $A=\bigoplus_{i=1}^k A_i$ be an eigendecomposition of $A$, where $A_i\in \mathrm{O}(k_i)$. 
By Corollary $\ref{eigenordered}$, the matrices commuting with $A$ are block diagonal
matrices $B=\bigoplus_{i=1}^s B_i$ such that $[A_i,B_i]=0$ for all $i$. 
Therefore $Z(A)=\prod_{i=1}^k Z(A_i)$ and $\dim Z(A)=\sum_{i=1}^k \dim Z(A_i)$.
We can reduce to the case when $A$ has a single primary component. 
If $A=\pm\mathbb{I}_n$ the result is trivial. 
Let $n=2k$ and assume that $A=R^{\oplus k}_\theta$. Computing the matrices that commute with $A$ we find that
$Z(A)=j(\mathrm{U}(k))$. Hence $\dim Z(A)=\dim \mathrm{U}(k)=k^2.$
\end{proof}

We next use the above result to prove that the decomposition by $z$-classes coincides with the Segre decomposition.
We first study the case of an orthogonal normal form having a single primary component.

\begin{lem}\label{unvaport}
Let $A\in \mathrm{O}(n)$ be a normal form with a single primary component, 
and let $B\in \mathrm{O}(n)$ such that $Z(A)=Z(B)$. Then $B$ is a normal form with one
primary component and $\tilde\sigma_A=\tilde\sigma_B$.
\end{lem}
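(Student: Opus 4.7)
\emph{Proof plan.} The key observation is that any element of a group lies in the centre of its own centraliser: by definition every $g\in O(n)_B$ commutes with $B$, and $B\in O(n)_B$ since $B$ commutes with itself, so $B\in Z(O(n)_B)$. Combined with the hypothesis $O(n)_A=O(n)_B$ this forces $B\in Z(O(n)_A)$, and the problem reduces to computing the centre of $O(n)_A$ in each of the two possible shapes of a single-primary-component normal form, then checking that every element of this centre is itself a single-primary-component normal form with the same Segre symbol as $A$.

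From the proof of Lemma~\ref{stabort}, a normal form with a single primary component is either $\pm\mathbb{I}_n$ or $R_\theta^{\oplus n/2}$ with $\theta\in\R\setminus\{k\pi\}$ (the latter only for $n$ even). In the first case $O(n)_A=O(n)$, whose centre is $\{\pm\mathbb{I}_n\}$; thus $B=\pm\mathbb{I}_n$ is a single-primary-component normal form with the same Segre symbol as $A$, since $\tilde\sigma$ records the block sizes $m_1,m_t$ but not the signs $\varepsilon_i$. In the second case Lemma~\ref{stabort} gives $O(n)_A=j(U(n/2))$, and since $j$ is an injective group homomorphism,
\[
Z(j(U(n/2)))=j(Z(U(n/2)))=\{j(e^{i\alpha}\mathbb{I}_{n/2}):\alpha\in\R\}=\{R_\alpha^{\oplus n/2}:\alpha\in\R\},
\]
so $B=R_\alpha^{\oplus n/2}$ for some real $\alpha$.

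To finish I would rule out $\alpha\in\pi\Z$ in the second case: if $\alpha$ were a multiple of $\pi$ then $B=\pm\mathbb{I}_n$, and for $n\geq 2$ its centraliser would be the whole of $O(n)$, strictly larger than $j(U(n/2))$, contradicting $O(n)_B=O(n)_A$. Therefore $B=R_\alpha^{\oplus n/2}$ with $\alpha\notin\pi\Z$ is a genuine normal form with one primary component and Segre symbol $[(n/2,\overline{n/2})]$, matching $\tilde\sigma_A$. The only slightly delicate step is the centre computation for $j(U(n/2))$, which follows immediately from the standard fact $Z(U(k))=\{e^{i\alpha}\mathbb{I}_k:\alpha\in\R\}$ together with the injectivity of $j$; I do not foresee any serious obstacle.
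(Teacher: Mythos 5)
Your proposal is correct and follows essentially the same route as the paper: in both arguments the key point is that $B$ lies in the centre of $O(n)_B=O(n)_A$, the identity case is handled via $Z(O(n))=\{\pm\mathbb{I}_n\}$, and the rotation case via pulling back through the injective homomorphism $j$ to the centre of $U(n/2)$ (the homotheties $e^{i\alpha}\mathbb{I}$), with the same final exclusion of $\alpha\in\pi\Z$ by comparing centralisers.
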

\begin{proof}
If $A=\pm\mathbb{I}_n$ then $Z(A)=\mathrm{O}(n)$. Since the center of $\mathrm{O}(n)$ is $\pm\mathbb{I}_n$ it follows that
$B=\pm \mathbb{I}_n$. Therefore $\tilde\sigma_A=\tilde\sigma_B=[n]$.

Let $n=2k$ and assume that $A=R^{\oplus k}_\theta$. By
Proposition $\ref{stabort}$ we have $Z(A)=j(\mathrm{U}(k))$. Since $B\in Z(A)$, 
there exists $C\in \mathrm{U}(k)$ such that $B=j(C)$. Since $Z(B)=j(\mathrm{U}(k))$ the centralizer of $C$ in $\mathrm{U}(k)$ is the whole group $\mathrm{U}(k)$. 
Therefore $C=e^{i\alpha}\mathbb{I}_n$ for some $\alpha\in \RR$. Since $B\neq \pm \mathbb{I}_{n}$
we have $\alpha\in\RR\setminus\{k\pi\}$ and $B=R^{\oplus k}_\alpha$. In particular, $\tilde\sigma_A=\tilde\sigma_B=[(k\bar k)]$.
\end{proof}
\begin{teo}\label{otort}
Let $f,g\in \mathbf{I}(\Sp^n)$. Then $\tilde\sigma_f=\tilde\sigma_g$ if and only if $f$ and $g$ are in the same $z$-class.
\end{teo}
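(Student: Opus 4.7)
The $(\Leftarrow)$ direction is immediate from Theorem \ref{ti1}: if $\tilde\sigma_f=\tilde\sigma_g$ then $f$ and $g$ have a common normal form up to block permutation, and block permutations are realised by conjugation by the corresponding permutation matrices in $O(n+1)$, so $f$ and $g$ are $O(n+1)$-conjugate and hence share an orbit type. For the $(\Rightarrow)$ direction, suppose $f$ and $g$ have the same orbit type. By Theorem \ref{ti1} I may replace them by their normal forms $A$ and $B$; the orbit type hypothesis supplies $h\in O(n+1)$ with $O(n+1)_A=h^{-1}O(n+1)_B h$, so conjugating $A$ by $h$ (which leaves its Segre symbol unchanged) reduces the problem to the following: whenever $B$ is a normal form and $A\in O(n+1)$ satisfies $O(n+1)_A=O(n+1)_B$, one must show $\tilde\sigma_A=\tilde\sigma_B$.

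The plan is to recover the primary decomposition of $B$ intrinsically from $O(n+1)_B$ and then apply Lemma \ref{unvaport} blockwise. Writing $\tilde\sigma_B=[(n_1\bar n_1),\dots,(n_s\bar n_s),m_1,m_t]$, by Lemma \ref{stabort} the centre
\[
Z(O(n+1)_B)=\prod_{i=1}^{s}j(S^{1}\cdot I_{n_i})\times\prod_{t}\{\pm I_{m_t}\}
\]
has identity component a torus of rank $s$ whose isotypic decomposition of $\R^{n+1}$ recovers the $2n_i$-dimensional blocks of $B$, while the torus-fixed subspace carries $B$ as $\eps_1 I_{m_1}\oplus \eps_2 I_{m_t}$, whose $\pm 1$ eigenspaces are the remaining primary components. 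The equality $O(n+1)_A=O(n+1)_B$ forces $A\in O(n+1)_B$ and simultaneously $A$ to commute with every element of $O(n+1)_B$, hence $A\in Z(O(n+1)_B)$. Combining this with the dimension count $\dim O(n+1)_A=\dim O(n+1)_B$ from Lemma \ref{stabort} rules out any coalescence of eigenvalues of $A$ that would enlarge its centralizer beyond $O(n+1)_B$, so $A$ respects the very same primary decomposition as $B$. Writing $A=\bigoplus_\alpha A_\alpha$ and $B=\bigoplus_\alpha B_\alpha$ on this common decomposition, Corollary \ref{eigenordered} together with Lemma \ref{stabort} factor the isotropy equality into blockwise equalities $O(k_\alpha)_{A_\alpha}=O(k_\alpha)_{B_\alpha}$. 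Since each $B_\alpha$ is a normal form with a single primary component, Lemma \ref{unvaport} gives $\tilde\sigma_{A_\alpha}=\tilde\sigma_{B_\alpha}$, and concatenation yields $\tilde\sigma_A=\tilde\sigma_B$.

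The main obstacle is this intrinsic recovery of the primary decomposition of $B$ from $O(n+1)_B$: one must verify that the characters of the torus $Z(O(n+1)_B)^0$ on its isotypic blocks are pairwise distinct, so the $R_{\theta_i}$ blocks separate, and that equality (not merely inclusion) of isotropy groups rules out the degenerate configurations of $A$ in which two blocks merge into a larger $j(U(n))$ or $O(m)$ factor. Once this identification is secured, everything else reduces to the single-block case already handled by Lemma \ref{unvaport}.
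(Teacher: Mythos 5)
Your argument for the direction ``$\tilde\sigma_f=\tilde\sigma_g$ implies same orbit type'' is broken: equality of Segre symbols does \emph{not} imply that $f$ and $g$ are conjugate in $O(n+1)$. The Segre symbol records only the block structure of the normal form, not the angles $\theta_i$ or the signs $\eps_i$; Theorem \ref{ti1} gives uniqueness of the normal form of a \emph{single} isometry up to block permutation, not a common normal form for two isometries in the same Segre class. For instance $R_{\pi/3}$ and $R_{\pi/4}$ both have Segre symbol $[(1\bar1)]$ but are not conjugate in $O(2)$, since conjugation preserves eigenvalues --- indeed the whole point of the Segre classification is that each class is a union of infinitely many conjugacy classes. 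The conclusion is still true, and the correct argument is shorter than the one you gave: by Lemma \ref{stabort} the isotropy subgroup of a normal form is $\prod_i j(U(n_i))\times\prod_j O(m_j)$, which depends only on the Segre symbol and not on the angles, so suitably ordered normal forms of $f$ and $g$ have \emph{equal} isotropy subgroups; since conjugate elements have conjugate isotropy subgroups, $f$ and $g$ have conjugate isotropy subgroups and hence the same orbit type.

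Your treatment of the converse is correct and close in spirit to the paper's, with one genuine variation worth noting. The paper fixes the normal form $A$, uses Corollary \ref{eigenordered} to split the commuting matrix $B$ along the eigendecomposition of $A$, and identifies each block by Lemma \ref{unvaport}; you instead observe that $A$ lies in the centre of $O(n+1)_A=O(n+1)_B$, which by Lemma \ref{stabort} consists exactly of the matrices $\bigoplus_i R_{\alpha_i}^{\oplus n_i}\oplus\bigoplus_j\eps_j\mathbb{I}_{m_j}$, so $A$ is already in the required shape in one step --- the torus-character discussion and the final appeal to Lemma \ref{unvaport} are then superfluous. What remains is identical in both approaches: ruling out $\alpha_i=\alpha_j$, $\alpha_i\in\{0,\pi\}$ and $\eps_1=\eps_2$ via the strict inclusions $j(U(n_i))\times j(U(n_j))\subsetneq j(U(n_i+n_j))$, $j(U(n_i))\subsetneq O(2n_i)$ and $O(m_1)\times O(m_2)\subsetneq O(m_1+m_2)$. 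These are the verifications you defer to your last paragraph; they are routine, but note that for $n_i=1$ the degenerate case $\alpha_i\in\{0,\pi\}$ is not detected by the dimension count alone (as $\dim O(2)=\dim U(1)=1$), so you genuinely need equality of the groups there, as you indicate.
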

\begin{proof}
Assume that $\tilde\sigma_f=\tilde\sigma_g$. By Proposition $\ref{stabort}$ we can find normal forms $A$ and $B$ of $f$ and $g$ respectively 
such that $Z(A)=Z(B)$. Since conjugate matrices have conjugate centralizers it follows that $Z(f)$ and $Z(g)$ are conjugate.

Conversely, assume that $Z(f)$ and $Z(g)$ are conjugate. 
Let $A$ be a normal form of $f$ such that $Z(A)=Z(B)$, where $B$ is the matrix of $g$ in some basis.
Take an eigendecomposition $A=\bigoplus_{i=1}^k A_i$. By Proposition $\ref{eigenordered}$, $B$ admits a block diagonal decomposition
$B=\bigoplus_{i=1}^k B_i$
such that $Z(A_i)=Z(B_i)$. 
By Lemma $\ref{unvaport}$ the matrices $B_i$ are normal forms and $\tilde\sigma_{A_i}=\tilde\sigma_{B_i}$. 
Therefore if $A=R_{\theta_1}^{n_1}\oplus\cdots\oplus R_{\theta_s}^{n_s}\oplus \pm(\mathbb{I}_{m_1}\oplus -\mathbb{I}_{m_2})$ 
then $B=R_{\alpha_1}^{n_1}\oplus\cdots\oplus R_{\alpha_s}^{n_s}\oplus \mu\mathbb{I}_{m_1}\oplus \nu\mathbb{I}_{m_2}.$
In order to show that $\tilde\sigma_A=\tilde\sigma_B$ we need to see that $\alpha_i\neq \alpha_j$ for all $i\neq j$, and that $\mu\neq \nu$. 
This follows from the description of the centralizers of normal forms of Proposition $\ref{stabort}$, since
$j(\mathrm{U}(n_i))\times j(\mathrm{U}(n_j))\varsubsetneq j(\mathrm{U}(n_i+n_j))$ and $\mathrm{O}(m_1)\times \mathrm{O}(m_2)\varsubsetneq \mathrm{O}(m_1+m_2).$
\end{proof}

Since the group of isometries $\mathbf{I}(\Sp^n)$ is compact, its decomposition by $z$-classes is a Whitney-regular stratification
(see for example \cite{Pf}). In particular, each stratum is a manifold. We give their dimensions.

\begin{prop}\label{dimsigmaspheres}
Let $\Sigma$ be a Segre stratum of $\mathbf{I}(\Sp^n)$ with ${\tilde\sigma}=[(n_1\bar{n}_1), \cdots,(n_s\bar{n}_s),m_1,m_2]$.
Then
$$\dim\Sigma ={1\over 2}n(n+1)-\sum_{i=1}^s n_i^2-{1\over 2}\sum_{i=1}^2m_i(m_i-1)+s.$$
\end{prop}
\begin{proof}
Let $f$ be an element of $\Sigma$ and $[f]$ its orbit. 
The number of parameters of a normal from of $f$ is $s$. Therefore
$\dim\Sigma=\dim [f]+s$. Since $\dim [f]=\dim \mathrm{O}(n+1)-Z(f)$,
the result follows from Proposition $\ref{stabort}$.
\end{proof}
Each connected component of $\mathbf{I}(\Sp^n)$ has a stratum of maximal dimension $\Sigma_*$ which is dense in $\mathbf{I}(\Sp^n)$.
In view of the above formula, the maximal strata are those with Segre symbol
${\tilde\sigma}_*=[(1\bar{1}), \cdots,(1\bar{1})]$ and ${\tilde\sigma}_*=[(1\bar{1}), \cdots,(1\bar{1}),1,1]$ if $n$ is even and 
${\tilde\sigma}_*=[(1\bar{1}), \cdots,(1\bar{1}),1]$ if $n$ is odd.

\subsection{$z$-classes of $\mathbf{I}(\EE^n)$}
Consider the decomposition of a normal form $A\in\Euc(n)$ into its orthogonal and unipotent parts
$A=A_R\oplus A_1$, 
where $A_R\in \mathrm{O}(k)$ 
is an orthogonal normal form with no eigenvalues equal to 1 and $A_1\in\Euc(n-k)$ is a unipotent Euclidean normal form. 
Then the centralizer of $A$ in $\Euc(n)$ decomposes as $Z(A)=Z(A_R)\times Z(A_1)$,
where $Z(A_R)\in \mathrm{O}(k)$ and $Z(A_1)\in \Euc(n-k)$ denote the centralizers of $A_R$ and $A_1$ in 
$\mathrm{O}(k)$ and $\Euc(n-k)$ respectively.
Therefore to study the centralizer of $A$ it suffices to consider the case for which $A=A_1$ is unipotent.

\begin{prop}\label{stabeuc}
Let $A\in\Euc(n)$ be a unipotent Euclidean normal form.
\begin{enumerate}[1)]
\item If $A=\mathbb{I}_{n+1}$ is elliptic then $Z(A)=\Euc(n)$ and $\dim Z(A)={1\over 2}n(n+1)$.
\item If $A=\mathbb{I}_{n-1}\oplus \left(\begin{smallmatrix}
1&a\\
0&1
\end{smallmatrix}\right)$ is hyperbolic then 
$$Z(A)=\left\{\left(
\begin{smallmatrix}
B&0&b\\
0&1&c\\
0&0&1
\end{smallmatrix}
\right)\,;\,B\in \mathrm{O}(n-1), \begin{array}{l}b\in \RR^{n-1}\\ c\in \RR\end{array}
\right\}\text{ and }\dim Z(A)={1\over 2}n(n-1)+1.$$
\end{enumerate}
\end{prop}
\begin{proof}
It follows by a simple computation of the matrices commuting with $A$ in each case.
\end{proof}
\begin{lem}\label{arregloeuc}
Let $A\in \Euc(n)$ be a unipotent normal form and let $B\in \Euc(n)$ such that $Z(A)=Z(B)$. Then $B$ is unipotent and $\sigma_A=\sigma_B$.
\end{lem}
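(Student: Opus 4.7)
The plan is to split according to the two possible shapes of a unipotent Euclidean normal form given by Theorem~\ref{teoeuclidi} and to exploit the explicit centralizer description from Lemma~\ref{stabeuc}.

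If $f_u=\mathbb{I}_{n+1}$, then Lemma~\ref{stabeuc}(i) gives $\Euc(n)_{f}=\Euc(n)$, so the hypothesis becomes $\Euc(n)_{g}=\Euc(n)$; that is, $g$ lies in the center of $\Euc(n)$. A short direct calculation in the semidirect product $\Euc(n)=O(n)\ltimes\R^{n}$ (the linear part of $g$ must commute with every element of $O(n)$, and the translation part must be fixed by every orthogonal transformation) shows this center is trivial, so $g=\mathbb{I}_{n+1}=f$ and there is nothing more to prove.

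In the main case, $f_u=\mathbb{I}_{n-1}\oplus\left(\begin{smallmatrix}1&a\\0&1\end{smallmatrix}\right)$ with $a>0$. Since $g\in \Euc(n)_g=\Euc(n)_f$, Lemma~\ref{stabeuc}(ii) already pins down the shape of $g$ with parameters $B\in O(n-1)$, $b\in\R^{n-1}$ and $c\in\R$. The core step is to exploit the reverse inclusion $\Euc(n)_{f}\subseteq \Euc(n)_{g}$: every $h\in\Euc(n)_{f}$, also of the form in Lemma~\ref{stabeuc}(ii) with parameters $(C,d,e)$, must commute with $g$. A block multiplication of $hg$ and $gh$ yields the identities $CB=BC$ and $Cb+d=Bd+b$; specializing to $C=\mathbb{I}_{n-1}$ and letting $d\in\R^{n-1}$ range freely forces $B=\mathbb{I}_{n-1}$. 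Hence $g$ is a pure translation, in particular unipotent.

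It remains to rule out $g=\mathbb{I}_{n+1}$: if $g$ were the identity then $\Euc(n)_g=\Euc(n)$ would strictly contain $\Euc(n)_f$, a proper subgroup of strictly smaller dimension by Lemma~\ref{stabeuc}(ii). Therefore $g$ is a non-trivial translation, of hyperbolic type, with $\varphi_R=0$ and $\varphi_0=\mathbb{I}_n$, the same invariants as $f$. Since the Segre symbol $\sigma_f=[t;r;\tilde\sigma_R]$ records only the type, the integer $r=\dim\Ker(\varphi_0-I)=n$ and the (empty) Segre symbol of $\varphi_R$, we conclude $\sigma_f=\sigma_g$. I expect the only step that needs care to be the commutation computation identifying $B=\mathbb{I}_{n-1}$; the explicit centralizer from Lemma~\ref{stabeuc} reduces this to a short linear-algebra identity.
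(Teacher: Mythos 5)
Your proposal is correct and follows essentially the same strategy as the paper: split on the two unipotent normal forms, note that the center of $\Euc(n)$ is trivial in the first case, and in the second case use the explicit centralizer of Lemma~\ref{stabeuc}(ii) together with commutation of $g$ against chosen elements of $\Euc(n)_f$ to force the linear part of $g$ to be the identity and then exclude $g=\mathbb{I}_{n+1}$ by a dimension count. The only substantive difference is your choice of test elements: you take pure translations $d\in\R^{n-1}$ inside $\Euc(n)_f$, which give $Bd=d$ for all $d$ and hence $B=\mathbb{I}_{n-1}$ directly, whereas the paper lets $h_1$ range over $O(n-1)$, which a priori only yields $g_1=\pm\mathbb{I}_{n-1}$ before further argument — so your variant is, if anything, slightly cleaner at that step.
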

\begin{proof}
If $A=\mathbb{I}_{n+1}$ it is trivial. 
Assume that $A=\mathbb{I}_{n-1}\oplus\left(
\begin{smallmatrix}
1&a\\
0&1
\end{smallmatrix}\right)$. 
If $n=1$ then $B=\left(\begin{smallmatrix}1&b\\0&1\end{smallmatrix}\right)$ for some $b\in\RR$. Since $B$ cannot be the identity,
we have $b\neq 0$ and $\sigma_A=\sigma_B$.

Assume that $n>1$, and let $C=C_1 \oplus\left(\begin{smallmatrix}1&b\\0&1\end{smallmatrix}\right)\in\Euc(n)$, where
$C_1\in \mathrm{O}(n-1)$ and $b\in\RR$ are arbitrary. By Proposition $\ref{stabeuc}$, $C\in Z(A)$ and by hypothesis $C\in Z(B)$. Hence $[C,B]=0$. 
Therefore $B$ is a block diagonal matrix $B=B_R\oplus B_1$, where $B_R\in \mathrm{O}(n-1)$ and $B_1\in \Euc(1)$ 
are such that $Z(B_R)=\mathrm{O}(n)$ and $[B_1,\left(\begin{smallmatrix}1&b\\0&1\end{smallmatrix}\right)]=0$.
Hence $B_R=\mathbb{I}_{n-1}$ and by the case $n=1$, $B_1=\left(\begin{smallmatrix}1&c\\0&1\end{smallmatrix}\right)$,
for some $c\in \RR$. Since $B\neq \mathbb{I}_2$ we have $c\neq 0$. Hence $\sigma_A=\sigma_B$.
\end{proof}
\begin{teo}\label{zclasseuc}
Let $f,g\in \mathbf{I}(\EE^n)$. Then $\sigma_f=\sigma_g$ if and only if $f$ and $g$ are in the same $z$-class.
\end{teo}
\begin{proof}
Assume that $\sigma_f=\sigma_g$. By Proposition $\ref{stabeuc}$ there exist normal 
forms $A$ and $B$ of $f$ and $g$ respectively, such that $Z(A)=Z(B)$.
Since conjugate matrices have conjugate isotropy groups it follows that
$Z(f)$ is conjugate to $Z(g)$.

Conversely, assume that $Z(f)$ is conjugate to $Z(g)$. There exists a normal form $A\in\Euc(n)$ of $f$ and
a matrix $B$ of $g$ such that $Z(A)=Z(B)$. Let $A=A_R\oplus A_1$ be the decomposition of $A$ into its
orthogonal and unipotent components. Then $B$ is a block diagonal matrix $B=B_R\oplus B_1$, 
where $B_R\in \mathrm{O}(k)$ and $B_1\in\Euc(n-k)$ are such that $Z(B_R)=Z(A_R)$ and $Z(B_1)=Z(A_1)$. 
By Theorem $\ref{otort}$ we have $\tilde\sigma_{A_R}=\tilde\sigma_{B_R}$. 
By Lemma $\ref{arregloeuc}$ we have $\sigma_{A_1}=\sigma_{B_1}$. In particular $B_1$ is unipotent.
To prove that $\sigma_A=\sigma_B$ it remains to show that $1$ is not an eigenvalue of $B_R$.

Assume that $A_R=R\oplus\text{-}\mathbb{I}_m$, where $R\in \mathrm{O}(k-m)$ is a rotation matrix
of the form $R=R_{\theta_1}^{n_1}\oplus\cdots\oplus R_{\theta_s}^{n_s}$. 
Since $\tilde\sigma_{A_R}=\tilde\sigma_{B_R}$, $B_R$ has the form $R'\oplus \eps\mathbb{I}_m$, where $\eps=\pm 1$ and $\tilde\sigma_R=\tilde\sigma_{R'}$.
Assume that $\eps=1$. Then
$d_B:=\dim Z(B)=\dim Z(R)+\dim Z({\mathbb{I}_m\oplus A_1})$ and $d_A:=\dim Z(A)=\dim Z(R)+\dim \mathrm{O}(m)+\dim Z(A_1)$.
By Proposition $\ref{stabeuc}$ we have $d_B-d_A>0$, which is a contradiction.
Therefore $\eps=-1$ and $\sigma_B=\sigma_A$. Since $A$ and $B$ are matrices of $f$ and $g$, we have $\sigma_f=\sigma_g$.
\end{proof}

\begin{prop}\label{dimsigmaeuclidi}
Let $\Sigma$ be the $z$-stratum of $\mathbf{I}(\EE^n)$ with Segre symbol $\sigma=[\tilde\sigma;r;t]$. Denote by
$\widetilde\Sigma$ the stratum of $\mathbf{I}(\Sp^{n-r-1})$ with Segre symbol $\tilde\sigma$.
\begin{enumerate}[1)]
\item If $\Sigma$ has elliptic type then $\dim\Sigma=\dim\widetilde\Sigma+(n-r)(r+1)$.
\item If $\Sigma$ has hyperbolic type $\dim\Sigma=\dim\widetilde\Sigma+n(r+1)-r^2$.
\end{enumerate}
\end{prop}
\begin{proof}
Let $f$ be an element of $\Sigma$ and $[f]$ its orbit. Then 
$\dim\Sigma=\dim [f]+\Delta(f)$, where $\Delta(f)$ is the number of
parameters of a normal form of $f$. 
If $f$ is elliptic then $\Delta(f)=\Delta(\tilde f)$,
while if $f$ is hyperbolic
 $\Delta(f)=\Delta(\tilde f)+1$, since the translation involves one parameter.
Here $\tilde f$ denotes an element of $\widetilde\Sigma$. 
Since $\dim [f]=\dim \Euc(n)-Z(f)$, the result follows from Propositions $\ref{dimsigmaspheres}$ and $\ref{stabeuc}$.
\end{proof}

\subsection{$z$-classes of $\mathbf{I}(\HH^n)$}
Consider the space-time decomposition $T=T_t\oplus T_s\in \mathrm{O}^+(1,n)$ of a Lorentzian normal form.
Let $Z(T_t)$ and $Z(T_s)$ denote the centralizers of $T_t$ and $T_s$ in $\mathrm{O}^+(1,n_t)$ and $\mathrm{O}(n_s)$ respectively.
Then by Corollary $\ref{eigenordered}$ we have $Z(T)=Z(T_t)\times Z(T_s)\in\mathrm{O}^+(1,n)$.
The centralizers of spatial components were studied previously. We next study the temporal component.
\begin{prop}\label{stalor}
Let $T\in \mathrm{O}^+(1,n)$ be a Lorentzian normal form with no spatial component. 
\begin{enumerate}[1)]
 \item If $T=\mathbb{I}_{n+1}$ is elliptic then $Z(T)=\mathrm{O}^+(1,n)$ and $\dim Z(T)=n(n+1)/2$.
\item  If $T=\Theta\oplus \mathbb{I}_{n-2}$ is parabolic then
$$Z(T)=\left\{
{\tiny \left(\begin{array}{@{\,\,}c@{\,\,}c@{\,\,}c@{\,\,}c@{\,\,}c@{\,\,}c}
1\text{\normalfont{+}}d&c&\text{-}d&\vline&\textbf{a}\\
c&1&\text{-}c&\vline&0\\
d&c&1\text{-}d&\vline&a\\
\hline
{b}&0&\text{-}b&\vline&D
\end{array}\right);}
\begin{array}{l}
D\in \mathrm{O}(n-2)\\
{a}\in\RR^{n-2}\\
c\in\RR
\end{array},
\begin{array}{l}
d={1\over 2}(c^2+||{b}||^2)\\
{a}={b}D
\end{array}
\right\}
$$ and $\dim Z(T)=1+{1\over 2}(n-2)(n-1)$.
\item  If $T=\Omega_t$ is hyperbolic then $Z(T)$ is the component of $\mathbb{I}_2$ in $\mathrm{O}^+(1,1)$ and $\dim Z(T)=1$.
\end{enumerate}
\end{prop}
\begin{proof}
The only non-trivial case is the second. Let $M\in Z(T)$. 
We write $M$ as a block matrix $M=\left(\begin{smallmatrix}C&A\\B&D\end{smallmatrix}\right)$. The condition $[M,T]=0$ implies
$[\Theta,C]=0$, $(\Theta-\mathbb{I}_{3})A=0$ and $ B(\Theta-\mathbb{I}_3)=0.$
An easy computation shows that
$$A=\left(\begin{smallmatrix}
a_1&\cdots&a_{n-2}\\
0&\cdots&0\\
a_1&\cdots&a_{n-2}
          \end{smallmatrix}
\right)\text{ and }
B=\left(\begin{smallmatrix}
b_1&0&\text{-}b_1\\
\vdots&\vdots&\vdots\\
b_{n-2}&0&\text{-}b_{n-2}
          \end{smallmatrix}
\right).
$$
Recall from Section $\ref{hypernor}$ that $\Theta$ is conjugate to a unipotent Jordan block of size 3. 
The matrices commuting with a Jordan block are regular upper triangular matrices (see \cite{Ga}). This implies that
$C$ is a matrix of the form
$$C=\left(\begin{smallmatrix}
e\text{+}d&c&\text{-}d\\
c&e&\text{-}c\\
d&c&e\text{-}d
          \end{smallmatrix}
\right).$$
The condition $M^tQM=Q$ implies:
$C^tQC+B^tB=Q$, $A^tQA+D^tD=\mathbb{I}_{n-2}$ and $C^tQA+B^tD=0$.
From the first of these identities we deduce that $e=\pm 1$, and that $d={1\over 2e}(c^2+\sum_{i=1}^{n-2} b_i^2)$. 
Furthermore, for $M$ to be positive, the upper left entry of $C$ must be positive. Hence $e=1$.
Since $A^tQA=0$, from the second identity we obtain $D^tD=\mathbb{I}_{n-2}$, therefore $D\in \mathrm{O}(n-2)$. 
The third identity implies that $a_j=\sum_{i=1}^{n-2} b_id_{ij}$, where $d_{ij}$ are the coefficients
of $D$.
\end{proof}
\begin{lem}\label{arreglopoi}
Let $T\in \mathrm{O}^+(1,n)$ be a Lorentzian normal form with no spatial component and let $A\in \mathrm{O}^+(1,n)$ such that $Z(T)=Z(A)$. Then $\sigma_T=\sigma_A$.
\end{lem}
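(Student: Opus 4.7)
The plan is to do a case analysis on $T$. Since $T$ has no spatial component, the three possible forms of $T_t$ listed just before the definition of $\sigma_f$ for $\textbf{I}(\Hy^n)$ reduce to (i) $T = \mathbb{I}_{n+1}$; (ii) $T = \Theta \oplus \mathbb{I}_{n-2}$ with $n \geq 2$; and (iii) $T = \Omega_t \in PO(1,1)$ with $t \neq 0$. In each case I will use the hypothesis $PO(1,n)_A = PO(1,n)_T$ together with the explicit isotropy description in Lemma \ref{stalor} to determine $A$ up to conjugation, and read off $\sigma_A$.

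Cases (i) and (iii) are quick. In case (i) the hypothesis places $A$ in the center of $PO(1,n)$. Commuting $A$ with the subgroup $\{1\} \oplus O(n) \subset PO(1,n)$ forces, via Corollary \ref{eigenordered} and the center $\{\pm\mathbb{I}_n\}$ of $O(n)$, the form $A = 1 \oplus \eps\mathbb{I}_n$ with $\eps \in \{\pm 1\}$; commuting further with a boost $\Omega_s \oplus \mathbb{I}_{n-1}$ rules out $\eps = -1$, yielding $A = \mathbb{I}_{n+1} = T$. In case (iii), Lemma \ref{stalor} identifies $PO(1,1)_T$ with the identity component $\{\Omega_s : s \in \R\}$, hence $A = \Omega_s$ for some $s$; the value $s = 0$ is excluded because then $PO(1,1)_A = PO(1,1) \supsetneq PO(1,1)_T$, so $\sigma_A = \sigma_T$.

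The substantive case is (ii). Since $A \in PO(1,n)_T$, Lemma \ref{stalor} parameterizes $A$ by $(c_A, \textbf{b}_A, D_A)$ with $\textbf{a}_A = \textbf{b}_A D_A$ and $d_A = (c_A^2 + ||\textbf{b}_A||^2)/2$. First, requiring $A$ to commute with $\mathbb{I}_3 \oplus B$ for every $B \in O(n-2)$ (all of which lie in $PO(1,n)_T$) and matching the off-diagonal and bottom-right blocks forces $\textbf{a}_A = \textbf{b}_A = 0$ and $D_A = \eps\mathbb{I}_{n-2}$ with $\eps = \pm 1$. Thus $A = C_{c_A} \oplus \eps\mathbb{I}_{n-2}$, where $C_{c_A}$ denotes the $3 \times 3$ Lorentz block in the form of Lemma \ref{stalor}. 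Second, picking any $M \in PO(1,n)_T$ with $\textbf{b}_M \neq 0$ (possible when $n \geq 3$; in the case $n = 2$ there is no sign to fix) and using the easily verified identities $C_{c_A}(e_1 + e_3) = e_1 + e_3$ and $(e_1 - e_3)^t C_{c_A} = (e_1 - e_3)^t$, the commutation $[A, M] = 0$ collapses to the scalar condition $\eps = 1$. The hypothesis $PO(1,n)_A \neq PO(1,n)$ excludes $c_A = 0$, and every nontrivial $C_c$ is parabolic and conjugate to $\Theta$ (by Theorem \ref{canoniqueshy} applied inside $PO(1,2)$, or equivalently by the half-plane dilation argument noted after it). Therefore $A$ is conjugate to $\Theta \oplus \mathbb{I}_{n-2}$ and $\sigma_A = \sigma_T$.

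The main obstacle is the bookkeeping in case (ii): one must carry out block multiplications with the intricate matrices of Lemma \ref{stalor} carefully enough to verify that the constraints leave only the one-parameter family $C_{c_A} \oplus \mathbb{I}_{n-2}$ with $c_A \neq 0$. The other two cases are routine centralizer computations.
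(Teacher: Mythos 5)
Your proof is correct and follows essentially the same route as the paper: the elliptic and hyperbolic cases are dispatched by centralizer/center arguments, and in the parabolic case one commutes $A$ against $\mathbb{I}_3\oplus C$ for arbitrary $C\in O(n-2)$ to force $A=A_1\oplus\eps\mathbb{I}_{n-2}$ with $A_1$ in the one-parameter unipotent family commuting with $\Theta$, excludes $A_1=\mathbb{I}_3$ using the hypothesis, and identifies any nontrivial $A_1$ as conjugate to $\Theta$. The only local divergence is how $\eps=1$ is forced for $n>2$: the paper compares $\dim PO(1,n)_{\Theta\oplus\text{-}\mathbb{I}_{n-2}}=1+\tfrac{1}{2}(n-2)(n-3)$ with $\dim PO(1,n)_T=1+\tfrac{1}{2}(n-1)(n-2)$ and derives a contradiction, whereas you commute $A$ directly against an element of $PO(1,n)_T$ with $\textbf{b}\neq 0$ using $C_{c}(e_1+e_3)=e_1+e_3$; both arguments are valid and rest on the same Lemma $\ref{stalor}$.
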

\begin{proof}
The only non-trivial case is $T=\Theta\oplus\mathbb{I}_{n-2}$. Let $B=\mathbb{I}_3\oplus C$, where $C\in \mathrm{O}(n-2)$. 
Then $B\in Z(T)$ and $[B,A]=0$. By Corollary $\ref{eigenordered}$, $A$ 
is a block diagonal matrix $A=A_1\oplus A_2$, where $[A_1,\Theta]=0$ and $[A_2,C]=0$ for all $C\in \mathrm{O}(n-2)$. Therefore $A_2=\pm \mathbb{I}_{n-2}$ and 
$$A_1=\left(\begin{smallmatrix}
a\text{+}d&b&\text{-}d\\
b&a&\text{-}b\\
d&b&a\text{-}d
        \end{smallmatrix}\right).$$
The condition $A\in \mathrm{O}^+(1,n)$ implies $a=1$ and $d=b^2/2$.
If $b=0$ then $A_1$ is the identity matrix and by Proposition $\ref{stalor}$, $Z(A)\neq Z(T)$.
Therefore $b\neq 0$. In such case, $A_1$ is conjugate to $\Theta$, and $A$ has a normal form $\Theta\oplus\eps\mathbb{I}_{n-2}$, where $\eps=\pm 1$.
To see that $\sigma_A=\sigma_T$ it remains to show that for all $n>2$ we have $\eps=1$.

Assume that $n>2$ and $\eps=-1$. Then by Proposition $\ref{stalor}$, $Z(A)\cong Z(\Theta)\times \mathrm{O}(n-2)$. In particular we have
$\dim Z(T)-\dim Z(A)=n-2>0$, which is a contradiction. Hence $\eps=1$ and $\sigma_A=\sigma_T$. 
\end{proof}
\begin{teo}\label{zclasshyp}
Let $f,g\in \mathbf{I}(\HH^n)$. Then $\sigma_f=\sigma_g$ if and only if $f$ and $g$ are in the same $z$-class.
\end{teo}
\begin{proof}
Assume that $\sigma_f=\sigma_g$. By Proposition $\ref{stalor}$,
there exist normal forms $A$ and $B$ of $f$ and $g$ respectively 
such that $Z(A)=Z(B)$. Therefore the isotropy groups of $f$ and $g$ are conjugate to each other.

Conversely, assume that $Z(f)$ is conjugate to $Z(g)$. Then there exists a normal form $T$ of
$f$ such that $Z(T)=Z(B)$, where $B$ is the matrix of $g$ in some Lorentz basis.
Let $T=T_t\oplus T_s$ be the space-time decomposition of $T$. Then $B$ is a block diagonal matrix $B=B_1\oplus B_s$, where
$Z(T_t)=Z(B_1)$ and $Z(T_s)=Z(B_2)$. By Theorem $\ref{otort}$ we have $\tilde\sigma_{T_s}=\tilde\sigma_{B_2}$. By Lemma $\ref{arreglopoi}$ 
we have $\sigma_{T_t}=\sigma_{B_1}$.
To prove that $\sigma_B=\sigma_T$ it remains to show that if $T$ is elliptic or parabolic, then $1$ is not an eigenvalue of $B_2$.

Let $T_s=R\oplus\text{-}\mathbb{I}_{k}$, where $R$ is a composition of rotations. 
Since $\tilde\sigma_{T_s}=\tilde\sigma_{B_2}$, $B_2$ has a normal form
$R'\oplus \eps\mathbb{I}_k$, where $\tilde\sigma_R=\tilde\sigma_{R'}$. 
Assume that $\eps=1$. Then
$$\begin{array}{l}
d_B=\dim Z(B)=\dim Z({T_t\oplus\mathbb{I}_k})+\dim Z(R),\\
d_T=\dim Z({T})=\dim Z({T_t)}+\dim Z(R)+\dim \mathrm{O}(k).\end{array}
$$
By Proposition $\ref{stalor}$, in the elliptic and parabolic cases, we find that
$d_B-d_T> 0$, which is a contradiction. Therefore $\eps=-1$, and 1 is not
an eigenvalue of $B_2$.
Since $T$ and $A$ are matrices of $f$ and $g$ respectively, we have $\sigma_f=\sigma_g$.
\end{proof}

\begin{prop}
Let $\Sigma$ be a $z$-stratum of $\mathbf{I}(\HH^n)$ with Segre symbol $\sigma=[r;\tilde\sigma;t]$.
Denote by
$\widetilde\Sigma$ the stratum of $\mathbf{I}(\Sp^{n-r-1})$ with Segre symbol $\tilde\sigma$.

\begin{enumerate}[1)]
\item If $\Sigma$ has elliptic type then $\dim\Sigma=\dim\widetilde\Sigma+nr-r^2+r$.
\item If $\Sigma$ has parabolic type then $\dim\Sigma=\dim\widetilde\Sigma+nr-r^2+3r-4$.
\item If $\Sigma$ has hyperbolic type $\dim\Sigma=\dim\widetilde\Sigma+2n-1$.
\end{enumerate}
\end{prop}
\begin{proof}
Let $f$ be an element of $\Sigma$ and $[f]$ its orbit. Then 
$\dim\Sigma=\dim [f]+\Delta(f)$, where $\Delta(f)$ is the number of
parameters of a normal form of $f$. 
If $f$ is elliptic or parabolic then $\Delta(f)=\Delta(\tilde f)$,
while if $f$ is hyperbolic
then $\Delta(f)=\Delta(\tilde f)+1$, since the hyperbolic rotations depend on a parameter.
Here $\tilde f$ denotes an element of $\widetilde\Sigma$. 
Since $\dim [f]=\dim \Euc(n)-Z(f)$, the result follows from Propositions $\ref{stabort}$ and $\ref{stalor}$.
\end{proof}

\section{Invariant subspaces}
Let $\mathbb{M}^n$ denote one of the Riemannian manifolds $\Sp^n$, $\EE^n$ or $\HH^n$.
Recall that the \textit{generalized Grassmannian} $\Gr{k}{\mathbb{M}^n}$ is the set of closed totally geodesic submanifolds of $\mathbb{M}^n$ 
isometric to $\mathbb{M}^k$. It is a homogeneous space of dimension $(k+1)(n-k)$ (see \cite{Ob}).

Given $f\in \mathbf{I}(\mathbb{M}^n)$  we will denote by $\Gamma_f(k)$ the set of $f$-invariant closed totally geodesic 
submanifolds of $\mathbb{M}^n$ isometric to $\mathbb{M}^k$. Then $\Gamma_f(k)$ is a closed subset of $\Gr{k}{\mathbb{M}^n}$.
We also let $\Gamma_f=\bigsqcup_{k=0}^n \Gamma_f(k)$.

In this section, we describe the sets $\Gamma(k)$ and relate them to the Segre symbol:
We will show that $\Gamma_f(k)$ is a closed smooth submanifold of $\Gr{k}{\mathbb{M}^n}$ and that
given isometries $f,g\in\mathbf{I}(\mathbb{M}^n)$, then $\Gamma_f(k)\cong \Gamma_g(k)$
for all $0\leq k\leq n$ if and only if $f$ and $g$ are in the same Segre class.
In fact, we prove a stronger result: that the dimensions of 
the connected components of $\Gamma_f(k)$, for $0\leq k\leq 4$, determine the Segre symbol (and thus the $z$-class) of $f$.

\subsection{Invariant subspaces of a linear endomorphism}
We begin by recalling some basic properties of invariant subspaces for a linear endomorphism of a 
finite dimensional vector space, which will be needed in the later parts of the section. 
These have been studied by Shayman in \cite{Sh}. In what follows we adopt his notation.

Given an $n$-dimensional vector space $V$ defined over $\RR$ or $\CC$, denote by
$\Grl{k}{V}$ the Grassmannian of $k$-dimensional linear subspaces of $V$, where $k\leq n$.

Given an endomorphism $A\in End(V)$, denote by $S_A(k)$ the set of all $A$-invariant subspaces of $V$ of dimension $k$,
which is a closed algebraic subvariety of $\Grl{k}{V}$.

\begin{lem}\label{similars}
If $A,B\in End(V)$ are conjugate endomorphisms, there is an isomorphism of algebraic varieties $S_A(k)\cong S_B(k)$.
\end{lem}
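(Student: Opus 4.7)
The plan is to exhibit the isomorphism explicitly via the conjugating element and then invoke the standard algebraic structure of the Grassmannian. Since $A$ and $B$ are conjugate, there exists $P\in GL(V)$ with $B=PAP^{-1}$. I would define
\[
\phi_P:\Gr{k}{V}\longrightarrow\Gr{k}{V},\qquad U\mapsto P(U),
\]
which is an isomorphism of algebraic varieties, with inverse $\phi_{P^{-1}}$ (in Plücker coordinates this is just the induced map $\Lambda^k P$ on the ambient projective space, which is a linear isomorphism and restricts to an isomorphism of the Grassmannian with itself).

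Next I would verify that $\phi_P$ sends $S_A(k)$ to $S_B(k)$. If $U\in S_A(k)$, i.e.\ $A(U)\subseteq U$, then
\[
B(P(U))=PAP^{-1}P(U)=P(A(U))\subseteq P(U),
\]
so $P(U)\in S_B(k)$. Symmetrically, $\phi_{P^{-1}}$ carries $S_B(k)$ into $S_A(k)$ (using $A=P^{-1}BP$). Hence the restriction $\phi_P|_{S_A(k)}:S_A(k)\to S_B(k)$ is a bijection whose inverse is $\phi_{P^{-1}}|_{S_B(k)}$.

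Finally, since $S_A(k)$ and $S_B(k)$ are closed algebraic subvarieties of $\Gr{k}{V}$ and $\phi_P$ is an isomorphism of algebraic varieties on the ambient $\Gr{k}{V}$ mapping one onto the other, its restriction is an isomorphism of algebraic varieties between them. There is no real obstacle here; the only point to be careful about is that ``isomorphism of algebraic varieties'' must be checked both set-theoretically (bijection) and scheme-theoretically (morphism in both directions), which follows immediately from the fact that $\phi_P$ and $\phi_{P^{-1}}$ are algebraic and mutually inverse on the whole Grassmannian.
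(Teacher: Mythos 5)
Your proof is correct and follows exactly the paper's argument: the paper likewise takes the conjugating element $\alpha$ with $B=\alpha A\alpha^{-1}$ and observes that $U\mapsto \alpha U$ carries $S_A(k)$ isomorphically onto $S_B(k)$. Your additional remarks about Pl\"ucker coordinates and checking both directions merely make explicit what the paper leaves implicit.
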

\begin{proof}
Let $B=\alpha A\alpha^{-1}$ for some $\alpha\in \mathrm{GL}(n)$, 
and let $U\in S_A(k)$. Then $\alpha U\in S_B(k)$. 
The map $U\mapsto \alpha U$ defines an isomorphism $S_A(k)\to S_B(k)$.
\end{proof}
\begin{lem}\label{lemapenjat}
Let $A\in End(V)$ and let $V=V_1\oplus\cdots\oplus V_s$ be 
a primary decomposition of $V$ with respect to $A$.
Then every $A$-invariant subspace $U\subset V$ admits
a primary decomposition $U=U_1\oplus\cdots\oplus U_s$ 
with respect to $A|_U$, such that $U_i=U\cap V_i$.
\end{lem}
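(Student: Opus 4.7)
The plan is to reduce the claim to the classical fact that the spectral projectors associated with a primary decomposition are polynomials in the operator. Writing $m$ for the minimal polynomial of $A$ and $p_1,\ldots,p_s$ for its distinct irreducible factors, we have $m = \prod p_i^{m_i}$ and $V_i = \Ker p_i(A)^{m_i}$. By the Chinese Remainder Theorem applied to $\F[x]/(m)$, one can construct polynomials $q_1,\ldots,q_s\in\F[x]$ such that the linear maps $\pi_i := q_i(A)$ satisfy $\pi_i(V) = V_i$, $\pi_i|_{V_j} = 0$ for $j\neq i$, and $\pi_1+\cdots+\pi_s = I$. These are precisely the projectors onto the summands of the primary decomposition.

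The first step is to observe that each $U_i := U\cap V_i$ is $A$-invariant, as the intersection of two $A$-invariant subspaces. The second and main step is to verify that $U = U_1\oplus\cdots\oplus U_s$. For this, let $u\in U$. Writing $u = v_1+\cdots+v_s$ with $v_i\in V_i$, we have $v_i = \pi_i(u) = q_i(A)(u)$. Since $U$ is $A$-invariant, it is invariant under every polynomial in $A$; in particular $v_i\in U$, so $v_i\in U\cap V_i = U_i$. The sum $U_1+\cdots+U_s$ is direct because it is contained in the direct sum $V_1\oplus\cdots\oplus V_s$.

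For the third step, we identify this direct-sum decomposition of $U$ with its primary decomposition with respect to $A|_U$. The minimal polynomial of $A|_U$ divides $m$, so its distinct irreducible factors form a subset of $\{p_1,\ldots,p_s\}$. For each $i$, the restriction $A|_{U_i}$ satisfies $p_i(A|_{U_i})^{m_i} = 0$, so the minimal polynomial of $A|_{U_i}$ is a power of $p_i$ (or $U_i=0$). Hence $U_i$ is contained in the $p_i$-primary component of $U$; combining this with the decomposition $U=\bigoplus U_i$, the uniqueness of the primary decomposition forces $U_i$ to equal that component exactly.

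I do not expect a serious obstacle: the only non-trivial input is the fact that the projectors $\pi_i$ are polynomials in $A$, which is a standard consequence of the coprimality of the $p_i^{m_i}$ and is the reason any $A$-invariant subspace automatically respects the primary decomposition. Everything else is bookkeeping about dimensions and minimal polynomials.
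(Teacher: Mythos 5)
Your proof is correct, and it runs in the opposite direction from the paper's. The paper takes for granted the primary decomposition of $U$ with respect to $A|_U$ (the general primary decomposition theorem applied to the restriction), writes its components as $U_i=\Ker(A|_U-\eps_i I)^{m_i}$, and identifies them with $U\cap V_i$ by comparing kernels: $U_i\subseteq U\cap V_i$ because $m_i\leq n_i$, and conversely $U\cap V_i=\Ker(A|_U-\eps_i I)^{n_i}=\Ker(A|_U-\eps_i I)^{m_i}=U_i$ since the kernel chain of $A|_U-\eps_i I$ stabilizes at the exponent of the minimal polynomial of $A|_U$. You instead start from the subspaces $U\cap V_i$, prove directly that they decompose $U$ by using the fact that the spectral projectors are polynomials in $A$ (hence preserve any $A$-invariant subspace), and only then match the pieces with the primary components of $A|_U$. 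Both arguments rest on standard facts; yours requires the Chinese Remainder construction of the projectors but in exchange makes the decomposition of $U$ explicit rather than quoting the decomposition theorem for $A|_U$. A small point in your favour: the paper's proof is written with linear factors $(A-\eps_i I)$, i.e.\ for the split case, even though the lemma is applied to real orthogonal maps whose primary components correspond to irreducible quadratic factors; your formulation with arbitrary irreducible factors $p_i$ covers that case literally, whereas the paper's argument needs the cosmetic replacement of $(A-\eps_i I)$ by $p_i(A)$.
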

\begin{proof}
Let $V_i=\Ker\, p_i(A)^{n_i}$, where $p_i(t)^{n_i}$ is an elementary divisor. 
Then $U_i=\Ker\, p_i(A)^{m_i}$,
with $m_i\leq n_i$. Therefore
$V_i\cap U=\Ker\, p_i(A|_U)^{n_i}=\Ker\, p_i(A|_U)^{m_i}=U_i$.
\end{proof}
The following is a consequence of the previous lemma.
\begin{prop}[\cite{Sh}, Theorem 2]\label{decoautos}
Let $A\in End(V)$ and $V=V_1\oplus\cdots\oplus V_s$ be a primary decomposition of $V$ with respect to $A$.
For all $i=1,\cdots,s$ denote by $\pi_i:V\to V_i$ the natural projection, and by $A_i$ the restriction of $A$ to $V_i$. The map
$S_A\to S_{A_1}\times\cdots\times S_{A_s}$ defined by $U\mapsto(\pi_1(U),\cdots,\pi_s(U))$
is an isomorphism of algebraic varieties. In particular
$$S_A(k)\cong\bigsqcup_{k_1+\cdots +k_s=k} S_{A_1}(k_1)\times\cdots \times S_{A_s}(k_s).$$
\end{prop}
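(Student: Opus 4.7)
The plan is to unpack the proposition into three steps: construct the forward map $\Phi$ and its candidate inverse $\Psi$, show they are mutually inverse using Lemma \ref{lemapenjat}, and then upgrade the bijection to a morphism of algebraic varieties by stratifying according to the dimensions of the primary components.

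First I would verify that $\Phi(U)=(\pi_1(U),\dots,\pi_s(U))$ lands in $S_{A_1}\times\cdots\times S_{A_s}$. Given an $A$-invariant $U$, Lemma \ref{lemapenjat} provides the primary decomposition $U=U_1\oplus\cdots\oplus U_s$ with $U_i=U\cap V_i$, each $U_i$ being $A_i$-invariant. The crucial observation is that $\pi_i(U)=U_i$: any $u\in U$ decomposes uniquely as $u=u_1+\cdots+u_s$ with $u_j\in U_j\subseteq V_j$, so $\pi_i(u)=u_i\in U_i$, giving $\pi_i(U)\subseteq U_i$, and the reverse inclusion is immediate since $U_i\subseteq U$ is already contained in $V_i$. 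Hence $\Phi(U)=(U\cap V_1,\dots,U\cap V_s)$ and each component is $A_i$-invariant.

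Next I would define $\Psi:(U_1,\dots,U_s)\mapsto U_1\oplus\cdots\oplus U_s$. This output is $A$-invariant because $A(U_i)=A_i(U_i)\subseteq U_i$ for each $i$. The composites are easy: $\Phi\circ\Psi$ sends $(U_1,\dots,U_s)$ to $(\pi_i(\bigoplus_j U_j))_i=(U_i)_i$, while $\Psi\circ\Phi$ sends $U$ to $\bigoplus_i \pi_i(U)=\bigoplus_i U_i=U$, the last equality being exactly the content of Lemma \ref{lemapenjat}. So $\Phi$ and $\Psi$ are mutually inverse bijections between $S_A$ and $\prod_i S_{A_i}$.

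To get the algebraic structure, I would stratify $S_A(k)$ by the multi-index $(k_1,\dots,k_s)$ with $k_i=\dim(U\cap V_i)$ and $\sum k_i=k$. On each stratum the direct-sum map $\prod_i S_{A_i}(k_i)\to S_A(k)$ is a morphism of algebraic varieties (it is induced by the linear isomorphism $V_1\oplus\cdots\oplus V_s\cong V$, which descends to a regular map between the corresponding products of Grassmannians and Grassmannians of $V$), and its inverse is given by the intersection $U\mapsto U\cap V_i$, which at fixed dimension is a regular map on Grassmannians via the standard description of Schubert-type incidence loci. Thus on each stratum $\Phi$ restricts to an isomorphism of algebraic varieties, and assembling the strata yields
\[
S_A(k)\cong\bigsqcup_{k_1+\cdots+k_s=k} S_{A_1}(k_1)\times\cdots\times S_{A_s}(k_s).
\]

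The main obstacle is making the algebraic-morphism step fully rigorous: one must check that the stratification by the tuple $(k_1,\dots,k_s)$ is locally closed in $S_A(k)$ and that the intersection maps $U\mapsto U\cap V_i$ are regular once the intersection dimension is fixed; all the set-theoretic content, however, is a direct consequence of Lemma \ref{lemapenjat} together with the identity $\pi_i(U)=U\cap V_i$.
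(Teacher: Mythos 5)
Your proof is correct and follows exactly the route the paper indicates: the paper gives no proof of its own, stating only that the proposition is a consequence of Lemma \ref{lemapenjat} and citing Shayman, and your argument is precisely the natural elaboration of that remark, with the identity $\pi_i(U)=U\cap V_i$ doing the set-theoretic work and the dimension stratification handling the variety structure. As a small simplification of your final step, closedness (hence openness) of each stratum in $S_A(k)$ follows directly from properness of the Grassmannians $\Gr{k_i}{V_i}$, so no separate local-closedness check is needed.
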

In view of the above result, to study the varieties of invariant subspaces of a linear endomorphism
one can restrict to the case when the endomorphism has a single primary component.

We remark that the varieties $S_A(k)$ are singular in general (see \cite{Sh}).
For orthogonal maps the situation is much more simple, 
since the normal forms of such transformations are diagonal over $\CC$. 
We will see that in this case the varieties $S_A(k)$ are smooth.

\subsection{Invariant totally geodesic subspheres of $\Sp^n$}
The spherical Grassmannian $\Gs{k}{n}$ is the set of $k$-dimensional totally geodesic subspheres
of $\Sp^n$. Such subspheres are precisely the intersections of $\Sp^n$ with the linear subspaces of $\RR^{n+1}$. 
This follows directly from the description of the geodesics in $\Sp^n$. There is an identification
$\Gs{k}{n}=\Grl{k+1}{\RR^{n+1}}.$
Therefore $\Gs{k}{n}$ is a smooth projective variety of dimension $(k+1)(n-k)$.

Let $f\in \mathbf{I}(\Sp^n)$. 
By the above identification we have that $\Sp^k\in \Gas{f}(k)$ if and only if there
exists $U\in S_f(k)$ such that $\Sp^k=\Sp^n\cap U$. This gives an isomorphism
$\Gas{f}(k)\cong S_f(k+1)$.
Therefore the study of $\Gas{f}(k)$ reduces to the study
of the varieties of invariant linear subspaces of a real orthogonal linear map.

\begin{prop}\label{unsolet}Let $A\in \mathrm{O}(n)$ be an orthogonal map having a single primary component.
\begin{enumerate}[i)]
\item If $A=\pm\mathbb{I}_n$ then $S_A(k)\cong \Grl{k}{\RR^n}$.
\item If $A=R^{\oplus p}_\theta$, with $2p=n$ then
$S_A(k)=\emptyset$ if $k$ is odd and $S_A(k)\cong\Grl{{{k}\over{2}}}{\CC^{p}}$ if $k$ is even.
\end{enumerate}
\end{prop}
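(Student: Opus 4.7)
Part (i) is immediate: if $A = \pm\mathbb{I}_n$ then every linear subspace of $V$ is $A$-invariant, so $S_A(k)$ is all of $\Gr{k}{V} \cong \Gr{k}{\R^n}$ as an algebraic variety.

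For part (ii), the plan is to use the standard identification of $V \cong \R^{2p}$ with $\C^p$ that realizes the block rotation $A = R_\theta^{\oplus p}$ as complex multiplication by $\alpha = e^{i\theta}$. Under this identification, a real subspace $U \subset \C^p$ is $A$-invariant if and only if $\alpha U \subseteq U$. The key algebraic observation is that, since $\theta \notin \pi\Z$ we have $\sin\theta \neq 0$, and therefore
\[
i \;=\; \frac{1}{\sin\theta}\bigl(\alpha - (\cos\theta)\cdot 1\bigr)
\]
lies in the real span of $\{1,\alpha\}$. Hence any $\R$-subspace stable under multiplication by $\alpha$ is automatically stable under multiplication by $i$, i.e.\ is a $\C$-linear subspace of $\C^p$.

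From this it follows that $\dim_\R U$ is even, so $S_A(k)=\emptyset$ when $k$ is odd. For $k=2m$, the assignment sending a real $A$-invariant subspace to itself viewed as a complex subspace gives a bijection $S_A(2m) \to \Gr{m}{\C^p}$, whose inverse is the restriction of scalars $W \mapsto W_\R$. I would check that this bijection is an isomorphism of smooth (algebraic) varieties by noting that it is the restriction of the natural closed embedding $\Gr{m}{\C^p} \hookrightarrow \Gr{2m}{\R^{2p}}$, whose image is cut out by the polynomial condition of $\alpha$-invariance on Plücker coordinates.

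The only mild subtlety is the equivalence between $\alpha$-invariance and $\C$-linearity of $U$; everything else is routine. An alternative, equivalent route that might be cleaner is to complexify and use the primary decomposition $V^c = V_\lambda \oplus V_{\bar\lambda}$ with $\lambda = e^{i\theta}$: every $A^c$-invariant complex subspace splits as $U_\lambda \oplus U_{\bar\lambda}$ by Lemma \ref{lemapenjat}, and real invariant subspaces correspond to those that are conjugation-stable, which forces $U_{\bar\lambda} = \overline{U_\lambda}$. The real subspace is thus parameterized by the complex subspace $U_\lambda \subset V_\lambda \cong \C^p$ alone, giving the identification $S_A(2m)\cong \Gr{m}{\C^p}$ in a conjugation-equivariant way.
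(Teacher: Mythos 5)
Your proof is correct, and your main argument takes a genuinely different route from the paper's. The paper proves part (ii) by complexifying: it passes to $V^c=V^c_\lambda\oplus \overline V^c_\lambda$ with $\lambda=e^{i\theta}$, invokes Lemma \ref{lemapenjat} to split every invariant subspace along this primary decomposition, and obtains the isomorphism $S_A\to S_{A_\lambda}$, $W\mapsto \pi_\lambda(W^c)$, where $A_\lambda=\lambda\mathbb{I}_p$ so that $S_{A_\lambda}(m)\cong\Gr{m}{\C^p}$. That is exactly the ``alternative route'' you sketch in your last paragraph. Your primary argument instead stays over $\R$: identifying $\R^{2p}$ with $\C^p$ so that $A$ is multiplication by $\alpha=e^{i\theta}$, and observing that $i=(\sin\theta)^{-1}(\alpha-\cos\theta)$ lies in the real span of $\{1,\alpha\}$, you conclude that a real subspace is $\alpha$-invariant if and only if it is $\C$-linear. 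This is more elementary (no complexification, no appeal to Lemma \ref{lemapenjat}) and makes the vanishing of $S_A(k)$ for odd $k$ and the identification $S_A(2m)=\Gr{m}{\C^p}$ completely transparent as the image of the standard closed embedding $\Gr{m}{\C^p}\hookrightarrow\Gr{2m}{\R^{2p}}$. What the paper's complexification buys in exchange is uniformity: the same mechanism (splitting along primary components of $A^c$ and reducing to a diagonal block) is reused throughout Section 4, e.g.\ in Proposition \ref{decoautos} and Theorem \ref{lineaconnex}, so the authors get part (ii) as one more instance of that machinery rather than as a separate ad hoc identification. Either write-up is acceptable; if you keep yours, just state explicitly (as you do) that $\C$-linearity of $U$ is equivalent to $\alpha$-invariance given that $U$ is already an $\R$-subspace, since that one line is the entire content of the proof.
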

\begin{proof}
The first case is trivial. Assume that $A=R_\theta^{\oplus p}$.
Let $V=\RR^{n}$ and denote by $V^{c}$ the complexification of $V$. Let $A^{c}:V^{c}\to V^{c}$ denote the complexification of $A$. 
Then $A^{c}$ has a pair of conjugate eigenvalues $\lambda$ and $\overline{\lambda}$. There is a primary
decomposition $V^{c}=V^c_\lambda\oplus \overline V^c_\lambda$, where $V^c_\lambda$ is the eigenspace of 
eigenvalue $\lambda$ and $\overline V^c_\lambda=V_{\overline\lambda}^c$ is the eigenspace of eigenvalue
$\overline \lambda$. Let $A_\lambda=A^{c}|_{V^c_\lambda}$ denote the restriction of $A^{c}$ to $V^c_\lambda$, 
and let $\pi_\lambda:V^{c}\to V^c_\lambda$ be the projection of $V^{c}$ along $\overline V^c_\lambda$. By Lemma $\ref{lemapenjat}$,
there is an isomorphism $S_{{A}}\longrightarrow S_{A_\lambda}$ of real algebraic varieties given by
$W\mapsto \pi_\lambda(W^{c})$,
where $W^{c}$ denotes the complexification of the subspace $W$. Note that the normal form of $A_\lambda$ 
is $\lambda \mathbb{I}_p$, where $\lambda\in \CC$. Therefore $S_{A_\lambda}(k)\cong \Grl{k}{\CC^p}$.
By the above isomorphism we get $S_A(k)\cong S_{A_\lambda}({{k}\over{2}})\cong \Grl{{{k}\over{2}}}{\CC^p}$.
\end{proof}

\begin{prop}\label{lineaconnex}
Let $A\in \mathrm{O}(n)$ be an orthogonal map.
\begin{enumerate}[i)]
 \item If $\tilde\sigma_A=[(n_1\overline n_1),\cdots,(n_s\overline n_s),m_1,m_2]$ is the Segre symbol of $A$ then
$$S_A(k)\cong \bigsqcup_{
\begin{array}{c}
\sum_{i=1}^s 2k_i+r_1+r_2=k\\
 k_i\leq n_i,\, r_i\leq m_i
\end{array}
} \Grl{k_1}{\CC^{n_1}}\times\cdots\times \Grl{k_s}{\CC^{n_s}}\times \Grl{r_1}{\RR^{m_1}}\times \Grl{r_2}{\RR^{m_2}}.$$
\item The dimensions of the connected components of $S_A(1)$ and $S_A(2)$ determine $\tilde\sigma_A$.
\end{enumerate}
\end{prop}
\begin{proof}
The first statement is a consequence of Propositions $\ref{decoautos}$ and $\ref{unsolet}$. Let us prove the second.
The variety of invariant lines of $A$ is given by
$$S_A(1)\cong \Grl{1}{\RR^{m_1}}\sqcup \Grl{1}{\RR^{m_2}}\cong\PP_{\RR}^{m_1-1}\sqcup \PP_{\RR}^{m_2-1}$$ for certain $m_1\geq m_2\geq 0$,
with the convention that $\Grl{1}{\RR^{0}}=\PP_{\RR}^{-1}=\emptyset$.
Likewise, the variety of invariant planes of $A$ is given by
$$S_A(2)\cong \left(\bigsqcup_{i=1}^s \PP_\CC^{n_i-1}\right)\sqcup \Grl{2}{\RR^{m_1}}\sqcup \Grl{2}{\RR^{m_2}}\sqcup  \left(\PP_{\RR}^{m_1-1}\times\PP_{\RR}^{m_2-1}\right)$$
where $s\geq 0$, $n_1\geq \cdots\geq n_s\geq 0$, and again $\Grl{1}{\CC^{0}}=\PP_\CC^{-1}=\emptyset$ and $\Grl{2}{\RR^{m}}=\emptyset$ for $m<2$.

Let $d=(d_1,d_2)$ denote the vector formed by the dimensions of the connected components of $S_A(1)$, where we set $d_1\geq d_2$ and $\dim(\emptyset)=-1$.
We then let $m_1:=d_1+1$ and $m_2:=d_2+1$.
Let $E$ denote the set of dimensions of the connected components of $S_A(2)$. By the above formula,
this set contains the elements $e_1:=m_1(2-m_1)$, $e_2:=m_2(2-m_2)$ (corresponding to the dimensions of the real Grassmannians of planes) and $e_3:=m_1+m_2-2$
(corresponding to the product of two real projective spaces) whenever they are non-negative.
Let $E'=E-E\cap \{e_1,e_2,e_3\}$ and $s:=\#(E')$. We can write the elements of $E'$ as a vector $e=(e_1,\cdots,e_s)$ where $e_1\geq e_2\geq\cdots\geq e_s>0$. 
Note that the elements of $E'$ must be even by construction (since they correspond to real dimensions of complex projective spaces).
For $1\leq i\leq s$, let $n_i:=e_i/2+1$. Then the Segre symbol of $A$ is
$\tilde\sigma_A=[(n_1,\overline n_1),\cdots,(n_s,\overline n_s),m_1,m_2]$, where entries that are 0 are omitted from the notation.
\end{proof}

\begin{teo}\label{main1}
Let $f\in \mathbf{I}(\Sp^n)$. The dimensions of the connected components of $\Gas{f}(0)$ and $\Gas{f}(1)$ determine the Segre symbol of $f$.
\end{teo}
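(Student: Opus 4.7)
The plan is to derive this as an immediate corollary of two results already established earlier in Section \ref{invs}. Under the standard identification $\textbf{I}(\Sp^n) \cong O(n+1)$, let $A \in O(n+1)$ be the orthogonal matrix representing $f$; by definition of the Segre symbol of an isometry of $\Sp^n$, we have $\tilde\sigma_f = \tilde\sigma_A$.

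First, I would invoke Proposition \ref{esferiquescong}, which provides isomorphisms of algebraic varieties
\[
\Gas{f}(0) \cong S_A(1), \qquad \Gas{f}(1) \cong S_A(2).
\]
These isomorphisms match connected components and preserve their dimensions, so the tuple of component dimensions of $\Gas{f}(k)$ coincides with that of $S_A(k+1)$ for $k = 0, 1$.

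Next, I would apply Theorem \ref{lineaconnex}(ii) to the orthogonal endomorphism $A$: the dimensions of the connected components of $S_A(1)$ and $S_A(2)$ determine the Segre symbol $\tilde\sigma_A$. Combining this with the previous step, the dimensions of the connected components of $\Gas{f}(0)$ and $\Gas{f}(1)$ determine $\tilde\sigma_A = \tilde\sigma_f$, which is the desired conclusion.

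No real obstacle is expected, since all the combinatorial work — in particular the case analysis distinguishing the number of primary components with eigenvalue $\pm 1$ from the rotation blocks, together with the use of $\dim_\R \PP_\C^{k} = 2k$ to separate complex from real Grassmannian factors — has already been carried out in the proof of Theorem \ref{lineaconnex}. The only thing to note for the record is that the isomorphisms of Proposition \ref{esferiquescong} are indeed isomorphisms of (smooth) algebraic varieties, hence respect the decomposition into connected components and their dimensions; this was built into the earlier identification $\Gs{k}{n} = \Gr{k+1}{\R^{n+1}}$ and the description of geodesic subspheres as intersections of $\Sp^n$ with linear subspaces of $\R^{n+1}$.
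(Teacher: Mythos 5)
Your proposal is correct and follows exactly the paper's own argument, which deduces the theorem from Proposition \ref{esferiquescong} (the identification $\Gas{f}(k)\cong S_f(k+1)$) together with Theorem \ref{lineaconnex}(ii). The only difference is that you spell out the dimension-preserving nature of the identification, which the paper leaves implicit; no gap either way.
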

\begin{proof}
It follows from Proposition $\ref{lineaconnex}$ and the fact that $\Gas{f}(k)\cong S_f(k+1)$.
\end{proof}

Tables $\ref{tts1},\ref{tts2}$ and $\ref{tts3}$ show a normal form representative of each Segre class,
of isometries of $\Sp^1$, $\Sp^2$ and $\Sp^3$, together with the varieties of invariant subspheres of each dimension.

Note that the elements of $\Gamma_f(0)$ are $0$-dimensional subspheres of $\Sp^n$, that is, pairs of antipodal points
of $\Sp^n$ which are $f$-invariant, but each point of the pair need not be fixed. One could also consider
a finer classification taking into account the sets of fixed points. Then every $z$-class of isometries
having a real eigenvalue $\pm 1$ would split into two subclasses corresponding to the 
connected components of the strata.
Although $\Sp^1$ is not a
space of constant curvature we include the table of $\mathbf{I}(\Sp^1)$ for completeness.

\begin{center}
\begin{table}[h]
\caption{Isometries of $\Sp^1$
}\label{tts1}
$
\begin{array}{@{\,\,\,\,}c@{\,\,\,\,}c@{\,\,\,\,}c@{\,\,\,\,}c@{\,\,\,\,\,\,\,\,\,}c@{\,\,\,\,\,\,\,\,\,}c@{\,\,\,\,\,\,\,\,\,}c@{}}
&\tilde\sigma&\text{normal form}&\dim \Sigma&\Gas{}(0)\\
\hline
\\
&[2]&\pm \mathbb{I}_2&0&\PP_{\RR}^1
\vs
\\
&[1,1]&\text{-}\mathbb{I}_1\oplus \mathbb{I}_1&1&\{*\,*\}
\vs
\\
&[(1\bar 1)]&R_\theta&1&\emptyset
\vs
\\
\end{array}
$
\end{table}
\end{center}

\begin{center}
\begin{table}[h]
\caption{Isometries of $\Sp^2$}\label{tts2}
$
\begin{array}{@{\,\,\,\,}c@{\,\,\,\,}c@{\,\,\,\,}c@{\,\,\,\,}c@{\,\,\,\,\,\,\,\,\,}c@{\,\,\,\,\,\,\,\,\,}c@{\,\,\,\,\,\,\,\,\,}c@{}}
&\tilde\sigma&\text{normal form}&\dim \Sigma&\Gas{}(0)&\Gas{}(1)\\
\hline
\\
&[3]&\pm \mathbb{I}_3&0&\PP_{\RR}^2&\PP_{\RR}^2&
\vs
\\
&[2,1]&\pm(\text{-}\mathbb{I}_2\oplus \mathbb{I}_1)&2&\PP_{\RR}^1\sqcup\{*\}&\PP_{\RR}^1\sqcup\{*\}
\vs
\\
&[(1\bar 1),1]&R_\theta\oplus \pm\mathbb{I}_1&3&\{*\}&\{*\}
\vs
\\
\end{array}
$
\end{table}
\end{center}

\begin{center}
\begin{table}[h]
\caption{Isometries of $\Sp^3$}\label{tts3}
$
\begin{array}{@{}c@{\,\,\,}c@{\,\,\,\,\,\,\,}c@{\,\,\,\,\,\,}c@{\,\,\,\,\,\,}c@{\,\,\,\,\,\,\,}c@{\,\,\,\,\,\,}c@{}}
&\tilde\sigma&\text{normal form}&\dim\Sigma&\Gas{}(0)&\Gas{}(1)&\Gas{}(2)\\
\hline
\\
&[4]&\pm\mathbb{I}_4&0&\PP_{\RR}^3&\Grl{2}{\RR^4}&\PP_{\RR}^3
\vs
\\
&[(2\bar 2)]&R_{\theta}\oplus R_{\theta}&3&\emptyset&\PP^1_{\CC}&\emptyset
\vs
\\
&[3,1]&\pm(\text{-}\mathbb{I}_3\oplus\mathbb{I}_1)&3&\PP_{\RR}^2\sqcup\{*\}&\PP_{\RR}^2\sqcup\PP_{\RR}^2&\PP_{\RR}^2\sqcup\{*\}
\vs
\\
&[2,2]&\text{-}\mathbb{I}_2\oplus\mathbb{I}_2&4&\PP_{\RR}^1\sqcup\PP_{\RR}^1&(\PP_{\RR}^1\times\PP_{\RR}^1)\sqcup\{*\,*\}&\PP_{\RR}^1\sqcup\PP_{\RR}^1
\vs
\\
&[(1\bar 1),2]&R_{\theta}\oplus \pm\mathbb{I}_2&5&\PP_{\RR}^1&\{*\,*\}&\PP_{\RR}^1
\vs
\\
&[(1\bar 1),(1\bar 1)]&R_{\theta_1}\oplus R_{\theta_2}&6&\emptyset&\{*\,*\}&\emptyset
\vs
\\
&[(1\bar 1),1,1]&R_\theta\oplus \text{-}\mathbb{I}_1\oplus\mathbb{I}_1&6&\{*\,*\}&\{*\,*\}&\{*\,*\}
\vs
\\
\end{array}
$
\end{table}
\end{center}

\subsection{Invariant affine subspaces of $\EE^n$}
Recall that the standard Euclidean affine space is given by $\EE^n=\{x_{n+1}=1\}\subset V=\RR^{n+1}$, 
and that $V_0=\{x_{n+1}=0\}\subset V$ is its associated vector space. The canonical inclusion $V_0\to V$ induces
an inclusion of Grassmannian varieties $\Grl{k}{V_0}\to \Grl{k}{V}$. The bijection between $k$-dimensional
affine subspaces of $\EE^n$ and $(k+1)$-dimensional linear subspaces of $V$ not contained in $V_0$ induces an isomorphism of algebraic varieties
$\Gaf{k}{n}\cong\Grl{k+1}{V}\setminus \Grl{k+1}{V_0}.$
Since $\Gaf{k}{n}$ is an open Zariski connected subset of $\Grl{k+1}{V}$, it is a quasi-projective algebraic variety of dimension $(k+1)(n-k)$.
We will denote by $\pi:\Gaf{k}{n}\to \Grl{k}{V_0}$ 
the natural projection sending each affine subspace to its associated vector space.

Let $f\in \mathbf{I}(\EE^n)$ be an isometry, and let $\varphi_0:V_0\to V_0$ be its associated linear map. 
Recall that an affine subspace $p+V\subset\EE^n$ is $f$-invariant
if and only if $\varphi_0(V)\subset V$ and $f(p)-p\in V$. Therefore we have $\pi(\Gamma_f(k))\subset S_{\varphi_0}(k)$.
\begin{lem}
If $f$ and $g$ are conjugate Euclidean isometries, the varieties $\Gamma_f(k)$ and $\Gamma_g(k)$ are isomorphic for all $k$.
\end{lem}
\begin{proof}
Let $\alpha\in \Euc(n)$ such that $g=\alpha f\alpha^{-1}$. Then $p+V\mapsto \alpha(p+V)$ is an isomorphism.
\end{proof}
Let $f\in \mathbf{I}(\EE^n)$ be an isometry induced by $\varphi:V\to V$, and let $\varphi_0:V_0\to V_0$ be its associated linear map. Let
$V=V_R\oplus V_1$ be the decomposition of $V$ into $\varphi$-invariant subspaces, 
where $V_1$ denotes the generalized eigenspace of eigenvalue $1$ and $V_R$ is the direct 
sum of the remaining generalized eigenspaces. Denote by $\varphi_R=\varphi|_{V_R}$ and 
$\varphi_1=\varphi|_{V_1}$ the restrictions of $\varphi$ to $V_R$ and $V_1$ respectively.

Let $\BB=\EE^n/V_R$ be the quotient affine space with associated vector space $V_{01}:=V_0/V_R\cong V_0\cap V_1.$
Note that $\BB$ is the affine space defined by $\BB=\{x\in V_1; x_{n+1}=1\}$ and the restriction of $\varphi_0$ to $V_{01}$ 
is the identity transformation. Since the map $\varphi_1:V_1\to V_1$ is unipotent, it induces an isometry $f_1:\BB\to \BB$, 
which is either the identity or a translation.
\begin{prop}\label{produ}
Let $f\in\mathbf{I}(\EE^n)$. With the previous notation, $\Gamma_f\cong S_{\varphi_R}\times \Gamma_{f_1}$. In particular,
$$\Gamma_f(k)\cong \bigsqcup_{k_1+k_2=k} S_{\varphi_R}(k_1)\times \Gamma_{f_1}(k_2).$$
\end{prop}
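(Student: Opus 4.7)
The plan is to construct mutually inverse algebraic maps between $\Gamma_f$ and $S_{\varphi_R}\times\Gamma_{f_1}$. For the forward map, send an $f$-invariant affine subspace $Q=p+W\in\Gamma_f$ to the pair $(W_R,\pi(Q))$, where $W=W_R\oplus W_1$ is the decomposition of the $\varphi_0$-invariant subspace $W\subset V_0$ afforded by Lemma \ref{lemapenjat} applied to the primary decomposition $V_0=V_R\oplus V_{01}$, and $\pi\colon\E^n\to\B$ denotes the natural projection. This map is well-defined because $\pi$ intertwines $f$ with $f_1$, so $\pi(Q)$ is an $f_1$-invariant affine subspace of $\B$ with associated vector space $W_1$.

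The key step is constructing the inverse. Given $(W_R,\bar{Q})\in S_{\varphi_R}\times\Gamma_{f_1}$ with $\bar{Q}=\bar{p}+W_1$, I would set $W:=W_R\oplus W_1$ and choose any lift $p_0\in\pi^{-1}(\bar{p})\subset\E^n$. Writing $f(p_0)-p_0=a+b$ with $a\in V_R$ and $b\in V_{01}$, the $f_1$-invariance of $\bar{Q}$ yields $b=f_1(\bar{p})-\bar{p}\in W_1$. The crucial fact that $\varphi_R-I$ is invertible on $V_R$ (since $V_R$ contains no primary component of eigenvalue $1$) allows one to solve $(\varphi_R-I)(v_R)=-a$ for some $v_R\in V_R$; setting $p:=p_0+v_R$ then gives $f(p)-p=b\in W$, so $Q:=p+W$ is an $f$-invariant affine subspace of $\E^n$ whose image under the forward map is $(W_R,\bar{Q})$.

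The main obstacle is well-definedness of the inverse: the subspace $Q$ must depend only on $(W_R,\bar{Q})$ and not on the choices of $p_0$ and $v_R$. If $Q_1$ and $Q_2$ are two candidates built this way, they share both associated vector space $W$ and projection $\bar{Q}$, so $Q_2=Q_1+u$ for some $u\in V_R+W_1$; write $u=u_R+u_1$ accordingly. Requiring both $Q_1$ and $Q_2$ to be $f$-invariant forces $(\varphi-I)(u)=(\varphi_R-I)(u_R)\in W$, and projecting onto $V_R$ gives $(\varphi_R-I)(u_R)\in W_R$. Since $W_R$ is $\varphi_R$-invariant and $(\varphi_R-I)|_{W_R}$ is an isomorphism, we conclude $u_R\in W_R$, hence $u\in W$ and $Q_1=Q_2$.

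Finally, both maps are clearly polynomial on each stratum by fixed dimension, so the bijection is an isomorphism of algebraic varieties. The graded refinement $\Gamma_f(k)\cong\bigsqcup_{k_1+k_2=k}S_{\varphi_R}(k_1)\times\Gamma_{f_1}(k_2)$ then follows from $\dim W=\dim W_R+\dim W_1$ together with $\dim\bar{Q}=\dim W_1$.
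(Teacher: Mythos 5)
Your proof is correct, but it takes a genuinely different route from the paper's. The paper's argument is entirely linear: it uses the identification $\Gaf{k}{n}\cong\Gr{k+1}{V}\setminus\Gr{k+1}{V_0}$ to write $\Gamma_f\cong S_\varphi\setminus S_{\varphi_0}$ inside the Grassmannians of $V=\R^{n+1}$, factors both $S_\varphi$ and $S_{\varphi_0}$ through Proposition \ref{decoautos} using the primary decomposition $V=V_R\oplus V_1$ (and $V_0=V_R\oplus V_{01}$, which is the induced decomposition since $V_R\subset V_0$), and then pulls $S_{\varphi_R}$ out of the set difference. You instead work directly with affine subspaces $p+W$, splitting only the direction space $W=W_R\oplus W_1$ via Lemma \ref{lemapenjat} and then handling the base point by hand: the invertibility of $\varphi_R-I$ on $V_R$ gives existence of a good representative $p$, and the same invertibility (restricted to the $\varphi_R$-invariant subspace $W_R$) gives uniqueness of $p+W$ modulo $W$. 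That extra ingredient is exactly what the paper's linearization makes invisible: in the linear picture the translation ambiguity is absorbed into the $(k+1)$-dimensional subspace of $V$. What your version buys is an explicit description of the inverse map and of why each pair $(W_R,\bar Q)$ determines a unique invariant affine subspace; what the paper's version buys is brevity and the fact that the algebraicity of the isomorphism is inherited directly from Proposition \ref{decoautos}, whereas your closing claim that the maps are ``clearly polynomial on each stratum'' is the one place where some more care would be needed (one should exhibit the inverse in local coordinates, using that $(\varphi_R-I)^{-1}$ is a fixed linear map). One cosmetic remark: the identity $(\varphi-I)(u)=(\varphi_R-I)(u_R)$ in your uniqueness step uses that $\varphi_0$ acts as the identity on $V_{01}$ (true here because $\varphi_0$ is orthogonal, hence semisimple); your subsequent projection onto $V_R$ makes the argument work even without this, so it is worth phrasing it that way.
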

\begin{proof}
We have an isomorphism $\Gamma_f\cong S_\varphi\setminus S_{\varphi_0}$. By Proposition $\ref{decoautos}$, 
there is an isomorphism $S_\varphi\cong S_{\varphi_R}\times S_{\varphi_1}$. Since $V_R\subset V_0$ we 
have $S_{\varphi_0}\cong S_{\varphi_R}\times S_{\varphi_{01}}$.
Therefore
$$\Gamma_f\cong \left(S_{\varphi_R}\times S_{\varphi_1}\right) 
\setminus \left(S_{\varphi_R}\times S_{\varphi_{01}}\right)\cong S_{\varphi_R}\times \left(S_{\varphi_1} \setminus S_{\varphi_{01}}\right)
\cong S_{\varphi_R}\times \Gamma_{f_1}.$$
\end{proof}
\begin{prop}\label{redufix}
Let $f\in \mathbf{I}(\EE^n)$ be a unipotent isometry.
\begin{enumerate}[i)]
 \item If $f$ is elliptic then $\Gamma_{f}(k)\cong \Gr{k}{\EE^n}$ for all $k\geq 0$.
\item If $f$ is hyperbolic then $\Gamma_{f}(0)=\emptyset$, and $\Gamma_{f}(k)\cong \Gr{k-1}{\EE^{n-1}}$, for all $k\geq 1$. 
\end{enumerate}
\end{prop}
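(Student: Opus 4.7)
The plan is to split on the two possibilities for the unipotent part $f_1$ given by the definition of the Euclidean Segre symbol: in case (\ref{eli}), $f_1 = \mathbb{I}_{r+1}$ is the identity of $\E^r$, and in case (\ref{hipe}), $f_1 = \mathbb{I}_{r-1} \oplus \left(\begin{smallmatrix}1&a\\0&1\end{smallmatrix}\right)$ is a translation of $\E^r$ by a nonzero vector.

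\textbf{Elliptic case.} If $f_1$ is the identity isometry of $\E^r$, then every closed totally geodesic submanifold is trivially $f_1$-invariant, so $\Gamma_{f_1}(k) = \Gr{k}{\E^r}$ for every $k$.

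\textbf{Hyperbolic case.} Write $f_1(x) = x + a$ with $a \in V_0$, $a \neq 0$. Since $f_1$ has no fixed point, $\Gamma_{f_1}(0)$ is empty. For $k \geq 1$, recall from the discussion preceding the proposition that an affine subspace $p + W$ is $f_1$-invariant iff $\varphi_0(W) \subseteq W$ (automatic here since $\varphi_{0,1}$ is the identity) and $f_1(p) - p = a \in W$. Thus $\Gamma_{f_1}(k)$ is exactly the set of $k$-dimensional affine subspaces of $\E^r$ whose associated direction contains the line $\langle a \rangle$. The idea is to quotient out this line: the projection $\pi : \E^r \to \E^r/\langle a\rangle \cong \E^{r-1}$ takes every such subspace onto a $(k-1)$-dimensional affine subspace of $\E^{r-1}$, and conversely every affine subspace $L \subseteq \E^{r-1}$ of dimension $k-1$ pulls back to $\pi^{-1}(L) \in \Gamma_{f_1}(k)$. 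These mutually inverse assignments give the required isomorphism $\Gamma_{f_1}(k) \cong \Gr{k-1}{\E^{r-1}}$.

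The main technical point to pin down is that this bijection is actually an isomorphism of algebraic varieties, not merely a set-theoretic bijection. I would check this by passing to the standard realization $\Gaf{k}{r} \cong \Gr{k+1}{V}\setminus\Gr{k+1}{V_0}$ used in the preamble: the condition $a \in W$ is a closed linear condition on the $(k+1)$-plane in $V$ representing $p+W$, and quotienting by $\langle a\rangle \subset V_0$ is a linear map on the ambient space, so both $p+W \mapsto \pi(p+W)$ and $L \mapsto \pi^{-1}(L)$ are regular. No subtle argument is needed beyond this bookkeeping; the only place one has to be careful is to verify, via the bundle structure $\Gaf{k}{r} \to \Gr{k}{V_0}$, that the dimensions of fibers match on both sides (they both equal $(k-1)(r-k) + (r-k) = k(r-k) = \dim \Gr{k-1}{\E^{r-1}}$), which confirms that the bijection respects the algebraic structure and is not just a morphism of underlying sets.
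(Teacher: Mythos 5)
Your argument is correct and is essentially the paper's own proof: the elliptic case is the same triviality, and in the hyperbolic case the paper likewise observes that invariance forces the translation direction $L=Sp\{e^r\}$ to lie in the direction space and then projects along $L$ onto an affine hyperplane $\B\cong\E^{r-1}$ (a concrete model of your quotient $\E^r/\langle a\rangle$), obtaining the same bijection $p+V\mapsto\pi(p+V)$ with $\Gaf{k-1}{r-1}$. Your extra remarks on why the bijection is a morphism of varieties only make explicit what the paper leaves implicit.
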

\begin{proof}
If $f$ is elliptic it is the identity transformation and every subspace is $f$-invariant.

Assume that $f$ is hyperbolic. Let $(\{e^i\}_{i=1}^n;p)$ be an Euclidean reference of $\EE^n$ such that
the matrix of $f$ in this reference is a normal form $\mathbb{I}_{n-1}\oplus\left(\begin{smallmatrix}1&a\\0&1\end{smallmatrix}\right)$. 
For all $p\in \EE^n$ we have $f(p)-p=ae^n$.
Let $p+V$ be an $f$-invariant subspace of dimension $k$ and define $L:=Sp\{e^n\}$. Then $L\subset V$,
since the invariance of $p+V$ implies $f(p)-p=ae^n\subset V$.

Consider the orthogonal complement $L^\bot$ of $L$ in $V_0\cap V_1$. Then $\BB=p+L^\bot$ is an
Euclidean affine space of dimension $n-1$. Denote by $\pi:\EE^n\to \BB$ the orthogonal projection of $\EE^n$ along
$L$, and by $i:\BB\to \EE^n$ the canonical inclusion. Let $g=\pi\circ f\circ i:\BB\to \BB$.
Then $g$ is the identity transformation.
There is a commutative diagram
$$
\begin{CD}
\EE^n@>f>>\EE^n\\
@AAi A @VV\pi V\\
\BB@>g>>\BB\\
\end{CD}
$$
Since $L\subseteq V$, $\pi(p+ V)$ is $g$-invariant of dimension $k-1$. Then $(p+ V)\to \pi(p+ V)$ defines an isomorphism
$\Gamma_f(k)\to \Gamma_g(k-1)$. Hence if $k\geq 1$, $\Gamma_{f}(k)\cong \Gamma_{g}(k-1)= \Gr{k-1}{\EE^{n-1}}.$
\end{proof}

\begin{teo}\label{main2}
Let $f\in\mathbf{I}(\EE^n)$ be an Euclidean isometry.
\begin{enumerate}[i)]
\item Let $\sigma_f=[\tilde\sigma;r;t]$ be the Segre symbol of $f$. Let $\varphi_R\in \mathrm{O}(n-r)$ be an orthogonal map with Segre symbol
$\tilde\sigma$. Then
$$\Gamma_f(k)\cong\bigsqcup_{k_1+k_2=k}S_{\varphi_R}(k_1)\times \Gr{k_2-d}{\EE^{r-d}},\text{ where }
\left\{\begin{array}{lll}
d=0\text{ if }t=e\text{ (elliptic)}\\
d=1\text{ if }t=h\text{ (hyperbolic)}
\end{array}
\right..
$$
\item The dimensions of the connected components of $\Gamma_f(k)$, for $k\leq 3$ determine the Segre symbol of $f$.
\end{enumerate}
\end{teo}
\begin{proof}
i). We can assume that $f$ is a normal form. Let $f=\varphi_R\oplus f_1$ be the decomposition of $f$ into its orthogonal and
unipotent components. Then $\Gamma_f\cong S_{\varphi_R}\times \Gamma_{f_1}$ by Proposition $\ref{produ}$.
The result follows from Proposition $\ref{redufix}$.

ii). If $\Gamma_f(0)\neq\emptyset$ then $f$ is elliptic, while if $\Gamma_f(0)=\emptyset$ 
then $f$ is hyperbolic. We study each case separately.

Assume that $f$ is elliptic. Then $\Gamma_f(0)\cong \EE^r$ for some $r\geq 0$ and by
i), $\sigma_f=[\tilde\sigma;r;e]$. 
Let us see that the dimensions of the connected components of $\Gamma_f(1)$ and $\Gamma_f(2)$ determine $\tilde\sigma$.
Indeed, since $r$ is known, they determine the dimensions
of $S_{\varphi_R}(k)$, for $k=1,2$. By Proposition $\ref{lineaconnex}$, these determine $\tilde\sigma$.

Assume that $f$ is hyperbolic. Then for $k\geq 1$, $\Gamma_f(k)\cong \Gamma_g(k-1)$, where $\sigma_g=[\tilde\sigma;r;e]$. 
By the previous case, $\sigma_g$ is determined by the dimensions of $\Gamma_g(k)$, for $k\leq 2$. 
Therefore $\sigma_f$ is determined by the dimensions of $\Gamma_f(k)$, for $k\leq 3$.
\end{proof}

Tables $\ref{tte1}$ to $\ref{tte3}$ show a normal form representative of each Segre class for isometries 
of $\EE^1$, $\EE^2$ and $\EE^3$, together with the varieties of invariant affine subspaces of each dimension.
\begin{center}
\begin{table}[h]
\caption{Isometries of $\EE^1$}\label{tte1}
$
\begin{array}{@{\,\,\,\,}c@{\,\,\,\,}c@{\,\,\,\,}c@{\,\,\,\,}c@{\,\,\,\,\,\,\,\,\,}c@{\,\,\,\,\,\,\,\,\,}c@{\,\,\,\,\,\,\,\,\,}c@{}}
&\sigma&\text{normal form}&\dim\Sigma&\Gamma(0)\\
\hline
\\
&[0;1;e]&\tiny{\left(\begin{array}{@{\,}c@{\,\,\vline\,\,}c@{\,}}
1& \\
\hline
 &1
\end{array}\right)}&0&\EE^1
\vs
\\
&[1;0;e]&\tiny{\left(\begin{array}{@{\,}c@{\,\,\vline\,\,}c@{\,}}
\text{-}1& \\
\hline
 &1
\end{array}\right)}&1&\{*\} 
\vs
\\
&[0;1;h]&\tiny{\left(\begin{array}{@{\,}c@{\,\,\vline\,\,}c@{\,}}
1&a \\
\hline
 &1
\end{array}\right)}&1&\emptyset
\\
\vs
\\
\end{array}
$
\end{table}
\end{center}

\begin{center}
\begin{table}[h]
\caption{Isometries of $\EE^2$}\label{tte2}
$
\begin{array}{@{\,\,\,\,}c@{\,\,\,\,}c@{\,\,\,\,}c@{\,\,\,\,}c@{\,\,\,\,\,\,\,\,\,}c@{\,\,\,\,\,\,\,\,\,}c@{\,\,\,\,\,\,\,\,\,}c@{}c@{}c@{}}
&\sigma&\text{normal form}&\dim\Sigma&\Gamma(0)&\Gamma(1)\\
\hline
\\
&[0;2;e]&
\tiny{\left(
\begin{array}{@{}c@{\,\,}c@{\,\,\vline\,\,}c@{\,}}
1& & \\
 &1& \\
\hline
 & &1\\
\end{array}
\right)} &0&\EE^2&\Gaf{1}{2}
\vs\\
&[1;1;e]&
\tiny{\left(
\begin{array}{@{}r@{\,\,}r@{\,\,\vline\,\,}r@{\,}}
\text{-}1& & \\
 &1& \\
\hline
 & &1\\
\end{array}
\right)} &2&\EE^1&\EE^1\sqcup\{*\}
\vs\\
&[2;0;e]&
\tiny{\left(
\begin{array}{@{}r@{\,\,}r@{\,\,\vline\,\,}r@{\,}}
\text{-}1& & \\
 &\text{-}1& \\
\hline
 & &1\\
\end{array}
\right)} &2&\{*\}&\PP_{\RR}^1
\vs\\
 &[0;2;h]&\tiny{\left(
\begin{array}{@{}r@{\,\,}r@{\,\,\vline\,\,}r@{\,}}
1& & \\
 &1&a\\
\hline
 & &1\\
\end{array}
\right)} &2&\emptyset&\EE^1
\vs\\
&[(1\bar{1});0;e]&
\tiny{\left(
\begin{array}{@{}r@{\,\,}r@{\,\,\vline\,\,}r@{\,}}
c&s& \\
\text{-}s&c& \\
\hline
 & &1\\
\end{array}
\right)} &3&\{*\}&\emptyset
\vs\\
&[1;1;h]&
\tiny{\left(
\begin{array}{@{}r@{\,\,}r@{\,\,\vline\,\,}r@{\,}}
\text{-}1& & \\
 &1&a\\
\hline
 & &1\\
\end{array}
\right)}&3&\emptyset&\{*\}
\vs\\
\end{array}
$
\end{table}
\end{center}

\begin{center}
\begin{table}[h]
\caption{Isometries of $\EE^3$}\label{tte3}
$
\begin{array}{@{}c@{}c@{}c@{}c@{\,\,\,\,}c@{\,\,\,\,}c@{\,\,\,\,\,}c@{}}
&\sigma&\text{normal form}&\dim\Sigma&\Gamma(0)&\Gamma(1)&\Gamma(2)\\
\hline
\\
&[0;3;e]&\mathbb{I}_4&0&\EE^3&\Gaf{1}{3}&\Gaf{2}{3}
\vs
\\
&[1;2;e]&\text{-}\mathbb{I}_1\oplus\mathbb{I}_3&3&\EE^2&\Gaf{1}{2}\sqcup \EE^2&\Gaf{1}{2}\sqcup\{*\}
\vs
\\
& [3;0;e]&\text{-}\mathbb{I}_3\oplus\mathbb{I}_1&3&\{*\}&\PP_{\RR}^2&\PP_{\RR}^2
\vs
\\
&[0;3;h]&\mathbb{I}_2\oplus\left(\begin{smallmatrix}1&a\\0&1\end{smallmatrix}\right)&3&\emptyset&\EE^2&\Gaf{1}{2}
\vs
\\
&[2;1;e]&\text{-}\mathbb{I}_2\oplus \mathbb{I}_2 &4&\EE^1&(\PP_{\RR}^1\times \EE^1)\sqcup\{*\}&\PP_{\RR}^1\sqcup\EE^1
\vs
\\
&[(1\bar{1});1;e]&R_\theta\oplus \mathbb{I}_2&5&\EE^1&\{*\}&\EE^1
\vs
\\
&[1;2;h]&\text{-}\mathbb{I}_1\oplus\mathbb{I}_1\oplus\left(\begin{smallmatrix}1&a\\0&1 \end{smallmatrix}\right)&5&\emptyset&\EE^1&\EE^1\sqcup \{*\}
\vs
\\
&[2;1;h]&\text{-}\mathbb{I}_2\oplus\left(\begin{smallmatrix}1&a\\0&1 \end{smallmatrix}\right)&5&\emptyset&\{*\}&\PP_{\RR}^1
\vs
\\
&[(1\bar{1}),1;0;3]&R_\theta\oplus\text{-}\mathbb{I}_1\oplus\mathbb{I}_1&6&\{*\}&\{*\}&\{*\}
\vs
\\
&[(1\bar{1});1;h]&R_\theta\oplus\left(\begin{smallmatrix}1&a\\0&1 \end{smallmatrix}\right)&6&\emptyset&\{*\}&\emptyset
\end{array}
$
\end{table}
\end{center}

\subsection{Invariant totally geodesic hyperbolic subspaces of $\HH^n$}
The hyperbolic Grassmannian $\Gh{k}{n}$ is the set of all $k$-dimensional totally geodesic hyperbolic subspaces $\HH^k$ of $\HH^n$.
These are precisely the intersections of $\HH^n$ with $(k+1)$-dimensional time-like subspaces of $\RR^{n+1}$.
Therefore $\Gh{k}{n}$ is an open connected subset of $\Grl{k+1}{\RR^{n+1}}$, 
but not a Zariski open set. Hence it is a semi-algebraic manifold of dimension $(k+1)(n-k)$.

If $f\in \mathbf{I}(\HH^n)$ is an isometry, the elements of $\Gah{f}(k)$ are in bijection with the
$(k+1)$-dimensional $f$-invariant time-like subspaces of $\RR^{n,1}$.
\begin{lem}
If $f$ and $g$ are conjugate isometries of the hyperbolic space, the varieties $\Gamma_f(k)$ and $\Gamma_g(k)$ are isomorphic for all $k$.
\end{lem}
\begin{proof}
It follows from the fact that Lorentz isometries preserve time-like subspaces.
\end{proof}
\begin{prop}\label{decohy}
Let $T=T_t\oplus T_s\in \mathrm{O}^+(1,n)$ be a space-time decomposition. Then $$\Gah{T}(k)\cong \bigsqcup_{k_1+k_2=k}\Gah{T_t}(k_1)\times S_{T_s}(k_2).$$
\end{prop}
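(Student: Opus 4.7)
My plan is to realize the claimed decomposition as the natural bijection induced by intersecting a $T$-invariant subspace with each factor of the space-time decomposition $V = V_t \oplus V_s$, together with the observation that, because $V_t \perp V_s$, the time-like condition on an invariant subspace is detected entirely on its temporal part.

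First, I would fix a $T$-invariant time-like subspace $W \subseteq V$ of dimension $k+1$. Since $V_t$ and $V_s$ arise from grouping primary components of $T$, Lemma \ref{lemapenjat} (extended additively over the groups of primary components, as in Proposition \ref{decoautos}) gives a direct sum decomposition $W = W_t \oplus W_s$ with $W_t := W \cap V_t$ being $T_t$-invariant and $W_s := W \cap V_s$ being $T_s$-invariant. Because the space-time decomposition consists of orthogonal $T$-invariant subspaces, for $w_t \in W_t$ and $w_s \in W_s$ we have $Q(w_t + w_s) = Q(w_t) + Q(w_s)$, and since $V_s$ is Euclidean, $Q(w_s) \geq 0$. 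Hence if $w = w_t + w_s \in W$ satisfies $Q(w) < 0$, then $Q(w_t) \leq Q(w) < 0$, so $W_t$ is time-like; conversely any time-like vector in $W_t$ is already a time-like vector in $W$. Thus $W_t \in \Gah{T_t}(\dim W_t - 1)$ and $W_s \in S_{T_s}(\dim W_s)$, and the additivity of dimensions gives the disjoint-union decomposition indexed by $k_1 + k_2 = k$ with $k_1 = \dim W_t - 1$ and $k_2 = \dim W_s$.

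For the inverse, given $W_t \in \Gah{T_t}(k_1)$ and $W_s \in S_{T_s}(k_2)$ I would define $W := W_t \oplus W_s$. This $W$ is automatically $T$-invariant (as a sum of $T_t$- and $T_s$-invariants inside the $T$-invariant summands $V_t$ and $V_s$), and the orthogonality computation above, applied in reverse, shows that any time-like vector in $W_t$ is a time-like vector in $W$, so $W$ is time-like of dimension $k+1 = (k_1+1)+k_2$. The two constructions are mutually inverse by the uniqueness of the primary decomposition.

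The only mild subtlety is verifying that the bijection is in fact an isomorphism of semialgebraic varieties and not just of sets: this I would handle by noting that the direct-sum map $(W_t, W_s) \mapsto W_t \oplus W_s$ is a regular morphism into $\Gr{k+1}{\R^{n+1}}$ restricting the hyperbolic and space-like conditions (both semialgebraic), while its inverse is given by intersecting with the fixed linear subspaces $V_t$ and $V_s$, hence also a regular map. The main conceptual step, and the one that makes everything fit together, is the observation that $Q(W_s) \geq 0$ forces the time-likeness of $W$ to coincide with the time-likeness of $W_t$; once this is in hand, the rest is essentially the content of Proposition \ref{decoautos} applied to the space-time grouping.
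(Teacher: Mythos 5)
Your proposal is correct and follows essentially the same route as the paper: decompose a $T$-invariant time-like subspace via Lemma \ref{lemapenjat} applied to the space-time grouping of primary components, and observe that since $V_s$ is space-like the time-like condition is carried entirely by the temporal summand. Your writeup is somewhat more detailed than the paper's (which omits the explicit inverse map and the semialgebraic check), but the substance is identical.
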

\begin{proof}
Let $U\subset \RR^{n+1}$ be a time-like $T$-invariant subspace of dimension $k>0$.
By Lemma $\ref{lemapenjat}$ we have $U=U_t\oplus U_s$, where $U_t\subset V_t$ is $T_t$-invariant and $U_s\subset V_s$ is $T_s$-invariant.
Moreover $U_s$ is space-like, so for $U$ to be time-like, $U_t$ must be time-like. Therefore $U_t\in \Gah{T_t}(k_1)$, for some $k_1\geq 0$,
and $U_s\in S_{T_s}(k_2)$, such that $k_1+k_2=k$.
\end{proof}

\begin{prop}\label{concretes}
Let $T\in \mathrm{O}^+(1,n)$ and let $r=\dim V_t$.
\begin{enumerate}[1)]
\item If $T$ is elliptic then
$\Gah{T_t}(k)\cong \Gh{k}{r-1}$ for all $0\leq k\leq r-1$.
\item If $T$ is parabolic then $\Gah{T_t}(k)=\emptyset$ for $k=0,1$ and 
$\Gah{T_t}(k)\cong \Gaf{k-2}{r-3}$ for $k\geq 2$.
\item If $T$ is hyperbolic then $\Gah{T_t}(0)=\emptyset$ and $\Gah{T_t}(1)=\{V_t\} $.
\end{enumerate}
\end{prop}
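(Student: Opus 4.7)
The plan is a case analysis based on the three possibilities for $T_t$ in Theorem \ref{canoniqueshy}.

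For case (i) (elliptic), $T_t = \mathbb{I}_r$, so every subspace of the Lorentz space $V_t$ is $T_t$-invariant. Hence $\Gah{T_t}(k)$ equals the full variety of $(k+1)$-dimensional time-like subspaces of $V_t$, which by the standing identification is $\Gh{k}{r-1}$. For case (iii) (hyperbolic), $V_t$ is a $2$-dimensional Lorentz space and $T_t = \Omega$ has exactly two $1$-dimensional invariant subspaces, namely the light-like eigenlines for $\lambda$ and $\lambda^{-1}$; neither is time-like, so $\Gah{T_t}(0) = \emptyset$, and the only $2$-dimensional invariant subspace is $V_t$ itself, giving $\Gah{T_t}(1) = \{V_t\}$.

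The substantive step is case (ii) (parabolic). I would write $V_t = W_0 \oplus W_1$, where $W_0$ is the $3$-dimensional subspace on which $\Theta$ acts (a Lorentz space of signature $(1,2)$) and $W_1$ is the $(r-3)$-dimensional identity block; the signature $(1,r-1)$ of $V_t$ together with $W_0 \perp W_1$ forces $W_1$ to be Euclidean. My first claim would be that any $T_t$-invariant time-like subspace $U$ satisfies $\pi_0(U) = W_0$, where $\pi_0 : V_t \to W_0$ denotes projection along $W_1$. Indeed, a time-like vector $v = v_0 + v_1 \in U$ satisfies $Q(v) = Q(v_0) + Q(v_1)$ with $Q(v_1) \geq 0$, so $v_0$ is time-like; combined with the fact, visible from the basis $\{u,v,w\}$ constructed in the proof of Theorem \ref{canoniqueshy}, that the proper $\Theta$-invariant subspaces $F_1, F_2$ contain no time-like vectors, this forces $\pi_0(U) = F_3 = W_0$. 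In particular $\dim U \geq 3$, yielding $\Gah{T_t}(k) = \emptyset$ for $k < 2$.

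For $k \geq 2$, I would then identify the set of $T_t$-invariant subspaces $U$ of dimension $k+1$ with $\pi_0(U) = W_0$ with $\Gaf{k-2}{r-3}$, and verify that every such $U$ is in fact time-like. Setting $U_1 = U \cap W_1$, the exact sequence $0 \to U_1 \to U \to W_0 \to 0$ determines a linear map $\psi : W_0 \to W_1/U_1$ sending $w_0$ to the class of any $w_1$ with $(w_0,w_1) \in U$; $T_t$-invariance is equivalent to $\psi \circ (\Theta - I) = 0$, and since $(\Theta - I)W_0 = F_2$, the map $\psi$ factors through $W_0/F_2 \cong \R$, so is determined by $\psi(\bar w) \in W_1/U_1$. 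Since $\dim U_1 = k - 2$, the pair $(U_1, \psi(\bar w))$ ranges over the bundle of cosets of $(k-2)$-planes in $W_1 \cong \R^{r-3}$, which is precisely $\Gaf{k-2}{r-3}$. The delicate point, and the main obstacle, is verifying that every such $U$ is time-like; I would establish this by exhibiting an explicit time-like vector $z = (\alpha u + w, p) \in U$, where $p \in W_1$ is any lift of $\psi(\bar w)$, and computing $Q(z) = -2\alpha + |p|^2$ from $Q(u,w) = -1$, $Q(u) = Q(w) = 0$, and $W_0 \perp W_1$, which is negative for $\alpha$ sufficiently large.
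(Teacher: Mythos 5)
Your proof is correct, and in the elliptic and hyperbolic cases it coincides with the paper's. In the parabolic case the two arguments establish the same forced structure but package it differently. The paper shows that every time-like $T_t$-invariant subspace $U$ must contain the fixed $2$-dimensional subspace $F_2$ spanned by the light-like eigenvector and the next Jordan vector (by applying $(T_t-I)$ and $(T_t-I)^2$ to a time-like element of $U$), observes that \emph{every} subspace containing $F_2$ is invariant because $(T_t-I)V_t=F_2$, parameterizes these by $\Gr{k-1}{V_t/F_2}\cong\Gr{k-1}{\R^{r-2}}$, and then removes the non-time-like ones, which it characterizes as exactly those lying in a distinguished hyperplane; the affine Grassmannian $\Gaf{k-2}{r-3}$ then appears as the difference $\Gr{k-1}{\R^{r-2}}\setminus\Gr{k-1}{\R^{r-3}}$, via the identification used throughout Section 4. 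You instead impose the condition $\pi_0(U)=W_0$ (equivalent, for invariant $U$, to containing $F_2$ together with a vector projecting onto $F_3\setminus F_2$), derive it from the orthogonality $W_0\perp W_1$ and the positivity of $Q$ on $W_1$ rather than from the Jordan computation, and then build the affine Grassmannian directly as the space of pairs $(U_1,\psi(\bar w))$, i.e.\ of cosets $p+U_1$ in $W_1$. The price of your route is that time-likeness of every such $U$ must be checked by hand, which you do correctly with the vector $\alpha u+w+p$ and $Q=-2\alpha+|p|^2$; the paper pays the corresponding price when it characterizes the non-time-like subspaces through $F_2$ as those with vanishing $v_0$-coordinate. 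Your version has the small advantage of producing $\Gaf{k-2}{r-3}$ intrinsically rather than through the auxiliary isomorphism with a difference of Grassmannians, and of making transparent why the answer is an \emph{affine} rather than projective Grassmannian: the residual freedom is exactly a coset of a $(k-2)$-plane in the Euclidean block.
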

\begin{proof}
Case 1. If $T$ is elliptic then $T_t$ is the identity transformation. Therefore every subspace is $T_t$-invariant and the result follows.

Case 2. Assume that $T$ is parabolic, and let $\{e_i\}_{i=0}^{r-1}$ be a Lorentz basis of $V_t$ such that the matrix of $T_t$ is $\Theta\oplus \mathbb{I}_{r-3}$.
To compute $\Gah{T_t}(k)$ we study the $T_t$-invariant time-like linear subspaces of $V_t$. We first show that if  $U\subset V_t$ is
a time-like $T_t$-invariant linear subspace of $V_t$ then $Sp\{e_1,e_0+e_2\}\subset U$.

Let $\{v_i\}_{i=0}^{r-1}$ be the basis of $V_t$ defined by $v_2={1\over 2}(e_0+e_2)$ and $v_i=e_i$ for all $i\neq 2$. Then
$$
T_tv_0=v_0+v_1+v_2,\,\,
T_tv_1=v_1+2v_2\,\,\text{ and }\,\,
T_t v_i=v_i \,, \forall\, i>1.
$$
Let $U\subset V_t$ be a time-like $T_t$-invariant linear subspace of $V_t$,
and let $u=\sum_{i=0}^{r-1}a_iv_i\in U$ be a time-like vector of $U$. Then
$(T-I)u=a_0v_1+(a_0+2a_1)v_2$ and $(T-I)^2u=2a_0v_2.$
Since $Q(u)<0$, we have $a_0\neq 0$, and since $(T-I)^2u\in U$ it follows that $v_2\in U$.
Therefore since $(T-I)u\in U$ we have $v_1\in U$. Hence $Sp\{v_1,v_2\}\subset U$ as claimed.

Note that since $Sp\{v_1,v_2\}$ is not time-like, every time-like $T_t$-invariant subspace of $V_t$
has dimension of at least 3. Hence $\Gah{T_t}(k)=\emptyset$, for $k\leq 1$.

For $k\geq 2$, the set of $k$-dimensional subspaces $U\subseteq V_t$ such that $Sp\{ v_1,v_2\}\subseteq U$ is 
isomorphic to $\Grl{k-2}{V_t/Sp\{v_1,v_2\}}\cong\Grl{k-2}{\RR^{r-2}}$. Furthermore, every such subspace is $T_t$-invariant.
We next show that the set of those $T_t$-invariant subspaces that contain $Sp\{v_1,v_2\}$ and are not time-like, is isomorphic to $\Grl{k-2}{\RR^{r-3}}$.

Let $V_0=\{x_0=0\} \cap V_t\subset V_t$ and let $U\subset V_t$ such that $Sp\{v_1,v_2\}\subset U$. 
Then $Q|_{U}\geq 0$ if and only if $U\subset V_0$. Indeed, assume that $Q|_U\geq0$ and
let $u=\sum_{i=0}^{r-1}a_iv_i\in U$. Then $Q(u)=-a_0(a_0+2a_2)+\sum _{i\neq2} a_i^2$. 
Therefore if $a_0\neq 0$, and since $v_2\in U$, we can choose $a_2$ such that $Q(u)<0$, 
which is a contradiction. Hence $a_0=0$ and $U\subset V_0$. Conversely since $V_0$ is 
space-like, if $U\subset V_0$ then $Q|_{U}\geq 0$.

Therefore the set of $T_t$-invariant $k$-dimensional time-like subspaces of $V_t$ is
isomorphic to $\Grl{k-2}{\RR^{r-2}}\setminus \Grl{k-2}{\RR^{r-3}}$, and we have
$$\Gah{T_t}(k)\cong\Grl{k-1}{\RR^{r-2}}\setminus \Grl{k-1}{\RR^{r-3}}\cong\Gr{k-2}{\EE^{r-3}}.$$

Case 3. If $T$ is hyperbolic then $r=2$ and $T_t$ has a normal form $\Omega=\left(\begin{smallmatrix}
c&d\\
d&c
\end{smallmatrix}
\right)$, where $c,d\in \RR$ are such that $c^2-d^2=1$, $d\neq 0$.
It follows from an easy computation that the only $T_t$-invariant proper
subspaces of $V_t$ are  light-like lines. Therefore $\Gah{T_t}(0)=\emptyset$ and $\Gah{T_t}(1)=\{V_t\}$.
\end{proof}

\begin{teo}\label{main3}
Let $T\in \mathrm{O}^+(1,n)$, and let $r=\dim V_t$.
\begin{enumerate}[i)]
 \item \begin{enumerate}[]
 \item If $T$ is elliptic then
$$\Gah{T}(k)\cong \bigsqcup_{k_1+k_2=k} \Gh{k_1}{r-1}\times S_{T_s}(k_2).$$
\item If $T$ is parabolic then
$\Gah{T}(k)=\emptyset$ for $k=0,1$ and
$$\Gah{T}(k)\cong \bigsqcup_{k_1+k_2=k-2} \Gaf{k_1}{r-3}\times S_{T_s}(k_2).$$
\item If $T$ is hyperbolic then
$\Gah{T}(0)=\emptyset$ and $\Gah{T}(k)\cong S_{T_s}(k-1)$ for $k\geq 1$.
\end{enumerate}
\item 
The dimensions of the connected components of $\Gah{T}(k)$,
for $k\leq 4$ determine the Segre symbol of $T$.
\end{enumerate}
\end{teo}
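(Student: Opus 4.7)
The first statement is a direct substitution. Proposition \ref{decohy} gives $\Gah{T}(k)\cong \bigsqcup_{k_1+k_2=k}\Gah{T_t}(k_1)\times S_{T_s}(k_2)$, and Proposition \ref{concretes} evaluates $\Gah{T_t}(k_1)$ in each of the three types. A case analysis on $t$ then yields the stated disjoint unions, once one observes that the parabolic terms with $k_1\in\{0,1\}$ and the hyperbolic terms with $k_1\neq 1$ are empty, so the sums can be reindexed as in the statement.

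For the second statement, my plan is to recover $\sigma_T=[t;r;\tilde\sigma_s]$ piece by piece from the dimension data $\mathbf{d}_k$ for $k\leq 2+d$. First I detect the type $t$ by emptiness: using part (i), $\Gah{T}(0)\neq\emptyset$ iff $t=e$, and in the remaining cases $\Gah{T}(1)\neq\emptyset$ iff $t=h$. Next I recover $r$: in the elliptic case $\Gah{T}(0)\cong\Hy^{r-1}$ is connected of dimension $r-1$, so $r=d_0^1+1$; in the parabolic case $\Gah{T}(2)\cong \E^{r-3}$ is connected of dimension $r-3$, so $r=d_2^1+3$; in the hyperbolic case $r=2$ by definition. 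Finally, by Theorem \ref{lineaconnex}, the Segre symbol $\tilde\sigma_s$ of $T_s$ is determined by the multisets of component dimensions of $S_{T_s}(1)$ and $S_{T_s}(2)$, so it remains to extract these multisets from the dimension vectors $\mathbf{d}_k$.

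In the hyperbolic case this is immediate from part (i), since $\Gah{T}(2)\cong S_{T_s}(1)$ and $\Gah{T}(3)\cong S_{T_s}(2)$. In the elliptic case, having fixed $r$, the decomposition
$$\Gah{T}(1)\cong \Gh{1}{r-1}\sqcup\bigl(\Hy^{r-1}\times S_{T_s}(1)\bigr)$$
contributes one component of known dimension $2(r-2)$ and one component of dimension $(r-1)+\dim_i$ for each component of $S_{T_s}(1)$. Removing one instance of the value $2(r-2)$ from the multiset $\mathbf{d}_1$ and subtracting $r-1$ from the remaining entries therefore yields the dimensions of the components of $S_{T_s}(1)$. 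The analogous manipulation of
$$\Gah{T}(2)\cong \Gh{2}{r-1}\sqcup\bigl(\Gh{1}{r-1}\times S_{T_s}(1)\bigr)\sqcup\bigl(\Hy^{r-1}\times S_{T_s}(2)\bigr),$$
after peeling off the first two pieces whose component dimensions are now known, recovers the dimensions of the components of $S_{T_s}(2)$. The parabolic case is handled identically, replacing $\Gh{k_1}{r-1}$ and $\Hy^{r-1}$ by $\Gaf{k_1}{r-3}$ and $\E^{r-3}$, and applied to $\mathbf{d}_3$ and $\mathbf{d}_4$.

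The main obstacle is the combinatorial bookkeeping in the elliptic and parabolic cases: a component of $\Gah{T}(k)$ coming from the ``pure geodesic'' factor $\Gh{k}{r-1}$ or $\Gaf{k-2}{r-3}$ may happen to have the same dimension as a component of the mixed factor $\Hy^{r-1}\times S_{T_s}(k)$ or $\E^{r-3}\times S_{T_s}(k-2)$. This is handled by working with multisets rather than sets, and by exploiting that the pure factor always contributes exactly one connected component of a dimension determined entirely by $r$; its contribution can therefore be subtracted unambiguously, and the residual multiset, after the appropriate shift by $r-1$ or $r-3$, provides exactly the input required by Theorem \ref{lineaconnex}.
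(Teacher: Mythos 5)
Your proof is correct and follows essentially the same route as the paper: part (i) by combining Propositions \ref{decohy} and \ref{concretes}, and part (ii) by detecting the type from the emptiness pattern of $\Gah{T}(0)$ and $\Gah{T}(1)$, reading off $r$ from the first nonempty variety, then peeling off the known pure factors to recover the component dimensions of $S_{T_s}(1)$ and $S_{T_s}(2)$ and invoking Theorem \ref{lineaconnex}. Your explicit multiset bookkeeping, and your use of $\E^{r-3}$ rather than $\Hy^{r-2}$ for $\Gah{T}(2)$ in the parabolic case (which is the version consistent with part (i) and with the tables), is if anything more careful than the paper's own write-up.
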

\begin{proof}
The first statement follows from Propositions $\ref{decohy}$ and $\ref{concretes}$. Let us prove the second.

Let $c_i$ denote the number of connected components of $\Gah{T}(i)$.
By the above proposition applied to the case $k=0$ we know that if $c_0>0$ then $T$ is elliptic, 
if $c_0=0$ and $c_1>0$ then $T$ is hyperbolic, and if $c_0=c_1=0$ then $T$ is parabolic.
Therefore we can study each type of isometry separately. Let $r=\dim V_t$. 
\begin{enumerate}[i)]
 \item 
If $T$ is elliptic then $\Gah{T}(0)\cong\Gh{0}{r-1}\cong \HH^{r-1}.$ Therefore the dimension of $\Gah{T}(0)$ determines $r$. Moreover,
$\Gah{T}(1)\cong \Gh{1}{r-1}\sqcup \left(\HH^{r-1}\times S_{T_s}(1)\right).$
Since $r$ is known, the dimensions of $\Gah{T}(1)$ determine the dimensions of $S_{T_s}(1)$. Therefore
$$\Gah{T}(2)\cong \Gh{2}{r-1}\sqcup\left(\Gh{1}{r-1}\times S_{T_s}(1)\right)\sqcup \left(\HH^{r-1}\times S_{T_s}(2)\right),$$
and the dimensions of $\Gah{T}(2)$ determine those of $S_{T_s}(2)$. By Proposition $\ref{lineaconnex}$ the Segre symbol of $T_s$ is
determined, so the Segre symbol of $T$ is determined as well.
\item If $T$ is hyperbolic then $\Gah{T}(k)\cong S_{T_s}(k-1)$ and the result follows from Proposition $\ref{lineaconnex}$.
\item 
If $T$ is parabolic then $\Gah{T}(2)\cong \HH^{r-2}$, so its dimension determines $r$. Moreover,
$$\Gah{T}(3)\cong \Gh{1}{r-2}\sqcup \left(\HH^{r-2}\times S_{T_s}(1)\right),$$
so the dimensions of $\Gah{T}(3)$ determine the dimensions of $S_{T_s}(1)$. Therefore
$$\Gah{T}(4)\cong \Gh{2}{r-2}\sqcup \left(\Gh{1}{r-2}\times S_{T_s}(1)\right)\sqcup \left(\HH^{r-2}\times S_{T_s}(2)\right),$$
so the dimensions of $\Gah{T}(4)$ determine the ones of $S_{T_s}(2)$. By Proposition $\ref{lineaconnex}$ we get the result.
\end{enumerate}
\end{proof}
Tables $\ref{tth1}$ to $\ref{tth3}$ show a normal form representative of each Segre class, 
for isometries of $\HH^1$, $\HH^2$ and $\HH^3$, together with the varieties of invariant hyperbolic subspaces of each dimension.

\begin{center}
\begin{table}[h]
\caption{Isometries of $\HH^1$}\label{tth1}
$
\begin{array}{@{\,\,\,\,}c@{\,\,\,\,}c@{\,\,\,\,}c@{\,\,\,\,}c@{\,\,\,\,\,\,\,\,\,}c@{\,\,\,\,\,\,\,\,\,}c@{\,\,\,\,\,\,\,\,\,}c@{}}
&\sigma&\text{normal form}&\dim\Sigma&\Gah{}(0)\\
\hline
\\
&[2;0;e]&\tiny{\left(\begin{smallmatrix}1\\&1\end{smallmatrix}\right)}&0&\HH^1
\vs
\\
&[1;1;e]&\tiny{\left(\begin{smallmatrix}1\\&\text{-}1\end{smallmatrix}\right)}&1&\{*\}
\vs
\\
&[2;0;h]&\tiny{\left(\begin{smallmatrix}c&d\\d&c\end{smallmatrix}\right)}&1&\emptyset
\end{array}
$
\end{table}
\end{center}

\begin{center}
\begin{table}[h]
\caption{Isometries of $\HH^2$}\label{tth2}
$
\begin{array}{@{\,\,\,\,}c@{\,\,\,\,}c@{\,\,\,\,}c@{\,\,\,\,}c@{\,\,\,\,\,\,\,\,\,}c@{\,\,\,\,\,\,\,\,\,}c@{\,\,\,\,\,\,\,\,\,}c@{}}
&\sigma&\text{normal form}&\dim\Sigma&\Gah{}(0)&\Gah{}(1)\\
\hline
\\
&[3;0;e]&\tiny{\left(\begin{smallmatrix}1\\&1\\&&1\end{smallmatrix}\right)}&0&\HH^2&\Gh{1}{2}
\vs
\\
&[2;1;e]&\tiny{\left(\begin{smallmatrix}1\\&1\\&&\text{-}1\end{smallmatrix}\right)}&2&\HH^1&\HH^1\sqcup\{*\}
\vs
\\
&[1;2;e]&\tiny{\left(\begin{smallmatrix}1\\&\text{-}1\\&&\text{-}1\end{smallmatrix}\right)}&2&\{*\}&\PP_{\RR}^1
\vs
\\
&[3;0;p]&\tiny{\left(\begin{smallmatrix}{3\over2}&1&\text{-}{1\over2}\\1&1&\text{-}1\\{1\over2}&1&{1\over2}\end{smallmatrix}\right)}&2&\emptyset&\emptyset&
\vs
\\
&[1;(1\bar 1);e]&\tiny{\left(\begin{smallmatrix}1\\&a&b\\&\text{-}b&a\end{smallmatrix}\right)}&3&\{*\}&\emptyset
\vs
\\
&[2;1;h]&\tiny{\left(\begin{smallmatrix}c&d\\d&c\\&&\pm 1\end{smallmatrix}\right)}&3&\emptyset&\{*\}
\vs
\\
\end{array}
$
\end{table}
\end{center}

\begin{center}
\begin{table}[h]
\caption{Isometries of $\HH^3$}\label{tth3}
$
\begin{array}{@{}c@{}c@{\,\,\,\,\,\,}c@{\,\,\,\,\,\,}c@{\,\,\,\,\,\,\,\,\,}c@{\,\,\,\,\,\,}c@{\,\,\,\,\,\,}c@{}}
&\sigma&\text{normal form}&\dim\Sigma&\Gah{}(0)&\Gah{}(1)&\Gah{}(2)\\
\hline
\\
&[4;0;e]&\mathbb{I}_4&0&\HH^3&\Gr{1}{\HH^3}&\Gr{2}{\HH^3}
\vs
\\
&[3;1;e]&\mathbb{I}_3\oplus\text{-}\mathbb{I}_1&3&\HH^2&\Gr{1}{\HH^2}\sqcup\HH^2& \Gr{1}{\HH^2}\sqcup\{*\}
\vs
\\
&[1;3;e]&\mathbb{I}_1\oplus \text{-}\mathbb{I}_3&3&\{*\}&\PP_{\RR}^2&\PP_{\RR}^2
\vs
\\
&[2;2;e]&\mathbb{I}_2\oplus\text{-}\mathbb{I}_2&4&\HH^1&(\PP_{\RR}^1\times \HH^1)\sqcup\{*\}&\HH^1\sqcup\PP_{\RR}^1
\vs
\\
&[4;0;p]&\Theta\oplus\mathbb{I}_1&4&\emptyset&\emptyset&\EE^1
\vs
\\
&[2;(1\bar 1);e]&\mathbb{I}_2\oplus R_\theta&5&\HH^1&\{*\}&\HH^1
\vs
\\
&[3;1;p]&\Theta\oplus\text{-}\mathbb{I}_1&5&\emptyset&\emptyset&\{*\}
\vs
\\
&[2;2;h]&\Omega_t\oplus\pm\mathbb{I}_2&5&\emptyset&\{*\}&\PP_{\RR}^1
\vs
\\
&[1;(1\bar 1),1;e]&\mathbb{I}_1\oplus R_\theta\oplus \text{-}\mathbb{I}_1&6&\{*\}&\{*\}&\{*\}
\vs
\\
&[2;1,1;h]&\Omega_t\oplus\mathbb{I}_1\oplus\text{-}\mathbb{I}_1&6&\emptyset&\{*\}&\{*\,*\}
\vs
\\
&[2;(1\bar1);h]&\Omega_t\oplus R_\theta&6&\emptyset&\{*\}&\emptyset
\end{array}
$\end{table}
\end{center}

\newpage

\section*{Acknowledgments}
I thank V. Navarro for pointing me to the particular problem that
gave rise to this paper and F. Guill\'{e}n for his valuable comments and suggestions.

\linespread{1}
\bibliographystyle{amsalpha}
\bibliography{bibliografia}
\mbox{}\\
\linespread{1.2}

\end{document}